\newcommand{\nc}{\newcommand}
\nc{\kk}{{\mathsf{k}}}
\nc{\SB}{{\mathsf{B}}}
\nc{\SJ}{{\mathsf{J}}}
\nc{\SK}{{\mathsf{K}}}
\nc{\SL}{{\mathsf{L}}}
\nc{\SM}{{\mathsf{M}}}
\nc{\SO}{{\mathsf{O}}}
\nc{\SQ}{{\mathsf{Q}}}
\nc{\SR}{{\mathsf{R}}}
\nc{\ST}{{\mathsf{T}}}
\nc{\SU}{{\mathsf{U}}}
\nc{\C}{{\mathbb{C}}}
\nc{\LL}{{\mathbb{L}}}
\nc{\PP}{{\mathbb{P}}}
\nc{\QQ}{{\mathbb{Q}}}
\nc{\RR}{{\mathbb{R}}}
\nc{\ZZ}{{\mathbb{Z}}}
\nc{\ba}{{\mathbf{a}}}
\nc{\bm}{{\mathbf{m}}}
\nc{\bq}{{\mathbf{q}}}
\nc{\bu}{{\mathbf{u}}}
\nc{\BB}{{\mathbf{B}}}
\nc{\BC}{{\mathbf{C}}}
\nc{\BD}{{\mathbf{D}}}
\nc{\BG}{{\mathbf{G}}}
\nc{\BH}{{\mathbf{H}}}
\nc{\BK}{{\mathbf{K}}}
\nc{\BL}{{\mathbf{L}}}
\nc{\BM}{{\mathbf{M}}}
\nc{\BP}{{\mathbf{P}}}
\nc{\BS}{{\mathbf{S}}}
\nc{\BT}{{\mathbf{T}}}
\nc{\BU}{{\mathbf{U}}}
\nc{\BZ}{{\mathbf{Z}}}
\nc{\BPr}{{\mathsf{P}}}
\nc{\BR}{{\mathbf{R}}}
\nc{\BW}{{\mathbf{W}}}
\nc{\CA}{{\mathscr{A}}}
\nc{\CB}{{\mathcal{B}}}
\nc{\CC}{{\mathcal{C}}}
\nc{\D}{{\mathcal{D}}}
\nc{\CE}{{\mathcal{E}}}
\nc{\CF}{{\mathcal{F}}}
\nc{\CG}{{\mathcal{G}}}
\nc{\CH}{{\mathcal{H}}}
\nc{\CI}{{\mathcal{I}}}
\nc{\CJ}{{\mathscr{J}}}
\nc{\CK}{{\mathscr{K}}}
\nc{\CL}{{\mathcal{L}}}
\nc{\CM}{{\mathcal{M}}}
\nc{\CN}{{\mathcal{N}}}
\nc{\CO}{{\mathcal{O}}}
\nc{\CP}{{\mathcal{P}}}
\nc{\CQ}{{\mathcal{Q}}}
\nc{\CR}{{\mathcal{R}}}
\nc{\CS}{{\mathscr{S}}}
\nc{\CT}{{\mathcal{T}}}
\nc{\CU}{{\mathcal{U}}}
\nc{\CV}{{\mathcal{V}}}
\nc{\CW}{{\mathcal{W}}}
\nc{\CX}{{\mathcal{X}}}
\nc{\CY}{{\mathcal{Y}}}
\nc{\fa}{{\mathfrak{a}}}
\nc{\fb}{{\mathfrak{b}}}
\nc{\fd}{{\mathfrak{d}}}
\nc{\fg}{{\mathfrak{g}}}
\nc{\fn}{{\mathfrak{n}}}
\nc{\fp}{{\mathfrak{p}}}
\nc{\fu}{{\mathfrak{u}}}
\nc{\FA}{{\mathfrak{A}}}
\nc{\FB}{{\mathfrak{B}}}
\nc{\FD}{{\mathfrak{D}}}
\nc{\FE}{{\mathfrak{E}}}
\nc{\FL}{{\mathfrak{L}}}
\nc{\FM}{{\mathfrak{M}}}
\nc{\lotimes}{\mathbin{\mathop{\otimes}\limits^{\mathbb{L}}}}
\nc{\CExt}{\mathop{\mathcal{E}\mathit{xt}}\nolimits}
\nc{\CHom}{\mathop{\mathcal{H}\mathit{om}}\nolimits}
\nc{\CEnd}{\mathop{\mathcal{E}\mathit{nd}}\nolimits}
\nc{\RCHom}{\mathop{\mathsf{R}\mathcal{H}\mathit{om}}\nolimits}
\nc{\Hom}{\mathop{\mathsf{Hom}}\nolimits}
\nc{\Ext}{\mathop{\mathsf{Ext}}\nolimits}
\nc{\End}{\mathop{\mathsf{End}}\nolimits}
\nc{\RGamma}{\mathop{{\mathsf{R}}\Gamma}\nolimits}
\nc{\RHom}{\mathop{\mathsf{RHom}}\nolimits}
\nc{\Tor}{\mathop{\mathsf{Tor}}\nolimits}
\nc{\Hilb}{\mathop{\mathsf{Hilb}}\nolimits}
\nc{\Spec}{\mathop{\mathsf{Spec}}\nolimits}
\nc{\Proj}{\mathop{\mathsf{Proj}}\nolimits}
\nc{\Pic}{\mathop{\mathsf{Pic}}\nolimits}
\nc{\Mod}{\mathop{\mathsf{Mod}}\nolimits}
\nc{\grmod}{\mathop{\mathsf{grmod}}\nolimits}
\nc{\Grmod}{\mathop{\mathsf{Grmod}}\nolimits}
\nc{\Qcoh}{\mathop{\mathsf{Qcoh}}\nolimits}
\nc{\Ann}{\mathop{\mathsf{Ann}}\nolimits}
\nc{\Ker}{\mathop{\mathsf{Ker}}\nolimits}
\nc{\Coker}{\mathop{\mathsf{Coker}}\nolimits}
\nc{\Cone}{\mathop{\mathsf{Cone}}\nolimits}
\nc{\Tot}{{\mathsf{Tot}}}
\nc{\coh}{{\mathop{{\mathsf{coh}}}}}
\nc{\Ab}{{\mathop{\mathcal{A}\mathit{b}}}}
\nc{\Tr}{\mathop{\mathsf{Tr}}\nolimits}
\nc{\Ind}{\mathop{\mathsf{Ind}}\nolimits}
\nc{\Res}{\mathop{\mathsf{Res}}\nolimits}
\nc{\Conv}{\mathop{\mathsf{Conv}}\nolimits}
\nc{\codim}{\mathop{\mathsf{codim}}\nolimits}
\nc{\Sing}{{\mathsf{Sing}}}
\nc{\supp}{\mathop{\mathsf{supp}}}
\nc{\vol}{\mathop{\mathsf{vol}}\nolimits}
\nc{\ch}{\mathop{\mathsf{ch}}\nolimits}
\nc{\perf}{{\mathsf{perf}}}
\nc{\rk}{\mathop{\mathsf{rk}}}
\nc{\Pf}{{\mathsf{Pf}}}
\nc{\Gr}{{\mathsf{Gr}}}
\nc{\OGr}{{\mathsf{OGr}}}
\nc{\OFl}{{\mathsf{OFl}}}
\nc{\TGr}{{\widetilde{\mathsf{Gr}}}}
\nc{\TOGr}{{\widetilde{\mathsf{OGr}}}}
\nc{\ti}{{\tilde{\imath}}}
\nc{\tj}{{\tilde{\jmath}}}
\nc{\Flag}{{\mathsf{Fl}}}
\nc{\Kosz}{{\mathsf{Kosz}}}
\nc{\LGr}{{\mathsf{LGr}}}
\nc{\LFl}{{\mathsf{LFl}}}
\nc{\SGr}{{\mathsf{SGr}}}
\nc{\OF}{{\mathsf{OF}}}
\nc{\Fl}{{\mathsf{Fl}}}
\nc{\Bl}{{\mathsf{Bl}}}
\nc{\Gm}{{\mathbb{G}_m}}
\nc{\GL}{{\mathsf{GL}}}
\nc{\PGL}{{\mathsf{PGL}}}
\nc{\GSL}{{\mathsf{SL}}}
\nc{\SP}{{\mathsf{Sp}}}
\nc{\SSO}{{\mathsf{SO}}}
\nc{\Spin}{{\mathsf{Spin}}}
\nc{\fsl}{{\mathfrak{sl}}}
\nc{\fso}{{\mathfrak{so}}}
\nc{\fgl}{{\mathfrak{gl}}}
\nc{\ev}{{\mathsf{ev}}}
\nc{\coev}{{\mathsf{coev}}}
\nc{\tr}{{\mathsf{tr}}}
\nc{\id}{{\mathsf{id}}}
\nc{\opp}{{\mathsf{opp}}}
\nc{\ed}[1]{{\mathsf{e}}({#1})}
\nc{\expd}[3]{{\mathsf{e}}_{(#1,#2)}({#3})}
\nc{\gexpd}[4]{{\mathsf{e}}_{(#1,#2),#3}({#4})}
\nc{\ced}[1]{{\mathsf{c}}({#1})}
\nc{\topd}[1]{{\mathsf{h}}({#1})}
\nc{\btmd}[1]{{\mathsf{t}}({#1})}
\nc{\You}[2]{\mathrm{YD}_{#1, #2}}
\nc{\Dcone}[1]{\mathrm{P}_{#1}}
\nc{\Yous}[1]{\mathrm{SYD}^{sym}_{#1}}
\nc{\Youe}[2]{\mathrm{EYD}_{#1, #2}}
\nc{\Youel}[3]{\Youe{#1}{#2}^{#3}}
\nc{\Yourb}[1]{\mathrm{BYD}^{r}_{#1}}
\nc{\hlf}{\frac{1}{2}}
\nc{\veps}{\varepsilon}
\nc{\FFF}{{\mathrm{FFL}}}
\newcommand{\dlen}{{\ell_{\mathrm{diag}}}}
\newcommand{\Clif}{\operatorname{\mathscr{C}\!\ell}}
\DeclareMathOperator{\Sym}{\mathrm{Sym}}
\theoremstyle{plain}
\newtheorem{theorem}{Theorem}[section]
\newtheorem{conjecture}[theorem]{Conjecture}
\newtheorem{lemma}[theorem]{Lemma}
\newtheorem{proposition}[theorem]{Proposition}
\newtheorem{corollary}[theorem]{Corollary}
\theoremstyle{definition}
\newtheorem{example}[theorem]{Example}
\theoremstyle{remark}
\newtheorem{remark}[theorem]{Remark}
\title{Derived categories of curves as components of Fano manifolds}
\author{Anton Fonarev}
\author{Alexander Kuznetsov}
\address{\sloppy
\parbox{0.95\textwidth}{
{\bf A.F.: }Algebraic Geometry Section, Steklov Mathematical Institute of Russian Academy of Sciences,
8 Gubkin str., Moscow 119991 Russia
\hfill\\[2pt]
\phantom{{\bf A.F.: }}National Research University Higher School of Economics, Russian Federation,
Laboratory of Mirror Symmetry, NRU HSE, 6 Usacheva str., Moscow, Russia, 119048
\hfill
}\bigskip}
\email{avfonarev@mi.ras.ru}
\address{\sloppy
\parbox{0.95\textwidth}{
{\bf A.K.: }Algebraic Geometry Section, Steklov Mathematical Institute of Russian Academy of Sciences,
8 Gubkin str., Moscow 119991 Russia
\hfill\\[2pt]
\phantom{{\bf A.K.: }}The Poncelet Laboratory, Independent University of Moscow
\hfill\\[2pt]
\phantom{{\bf A.K.: }}Laboratory of Algebraic Geometry, National Research University Higher School of Economics
\hfill
}\bigskip}
\email{akuznet@mi.ras.ru}
\thanks{Both authors were partially supported by the Russian Academic Excellence Project ``5-100'' and
by the RFBR grants 15-01-02164 and 15-51-50045.
A.K.\ was partially supported by the Simons foundation. 
A.F.\ was partially supported by the RFBR grant 15-01-02158 and by the Young Russian Mathematics award.}
\subjclass[2010]{14H60,14F05}
\begin{document}

\begin{abstract}
We prove that the derived category $\BD(C)$ of a generic curve of genus greater than one
embeds into the derived category $\BD(M)$ of the moduli space $M$ of rank~two stable
bundles on $C$ with fixed determinant of odd degree.
\end{abstract}

\maketitle

\section{Introduction}

For a smooth projective algebraic variety $X$ we denote by $\BD(X)$ the bounded derived category of coherent sheaves on $X$.
Let $C$ be a smooth projective curve of genus $g > 1$ over an algebraically closed field $\kk$ of characteristic 0. 
Fix a line bundle $L$ of odd degree on $C$ and let $M = M_C(2,L)$ be the moduli space of rank 2 stable vector bundles on $C$ with determinant $L$. 
Recall that $M$ is a smooth Fano variety of dimension $3g-3$ and index~2 (see, e.g., \cite[Th\'eor\`eme~F]{Drezet}).
Our goal is to show that the derived category of $C$ embeds fully and faithfully into the derived category of $M$.

Recall that by a theorem proved by Orlov in~\cite[Theorem~2.2]{Orlov} any embedding between the derived
categories of smooth projective varieties must be given by a Fourier--Mukai functor, i.e., by an object on their product.
A natural candidate for such an object in our case is, of course, the universal bundle $\CE$ on~$C\times M$, also known as the Poincar\'e bundle.

The main result of the paper is the following theorem, where the word \emph{generic} means the existence of a nonempty open substack in the moduli stack of genus $g$ curves, for which the statement holds.

\begin{theorem}\label{theorem-main}
For generic curve $C$ of genus $g > 1$ and a line bundle $L$ of odd degree the Fourier--Mukai functor
given by the universal vector bundle $\CE$ on $C \times M_C(2,L)$
\begin{equation*}
\Phi_\CE:\BD(C) \to \BD(M_C(2,L))
\end{equation*}
is fully faithful. 
In particular, $\BD(C)$ is an admissible subcategory in the derived category $\BD(M_C(2,L))$ of the Fano manifold~$M_C(2,L)$.
\end{theorem}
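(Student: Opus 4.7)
The natural approach is the Bondal--Orlov fully faithfulness criterion. Since the skyscraper sheaves $\{\CO_c\}_{c \in C}$ form a spanning class of $\BD(C)$ and $\Phi_\CE(\CO_c) = \CE_c := \CE|_{\{c\}\times M}$ is a rank two vector bundle on $M$, the functor is fully faithful if and only if the Ext groups $\Ext^\bullet_M(\CE_{c_1}, \CE_{c_2})$ reproduce $\Ext^\bullet_C(\CO_{c_1}, \CO_{c_2})$ for all pairs $c_1, c_2 \in C$. Because $\dim C = 1$, this amounts to three separate conditions: (i) $\Hom_M(\CE_c, \CE_c) = \kk$ for every $c$; (ii) $\Ext^i_M(\CE_c, \CE_c) = 0$ for $i \geq 2$; and (iii) $\Ext^\bullet_M(\CE_{c_1}, \CE_{c_2}) = 0$ whenever $c_1 \neq c_2$.

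These three conditions can be packaged uniformly as a statement about the complex
\[
K := Rp_{13*}\bigl(p_{12}^*\CE^\vee \otimes p_{23}^*\CE\bigr) \in \BD(C \times C),
\]
where $p_{ij}$ are the projections from $C \times M \times C$; its derived fibre at $(c_1, c_2)$ computes $\RHom_M(\CE_{c_1}, \CE_{c_2})$, and the desired conclusion is that $K \cong \CO_\Delta$.

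Conditions (i) and (ii) should be the more accessible, diagonal portion of the analysis. Simplicity of $\CE_c$ is expected to follow from the universal property of the Poincar\'e bundle together with the stability of the bundles it parametrizes, while higher self-Ext vanishing can be attacked via Serre duality on the Fano manifold $M$ (of dimension $3g-3$ and index~$2$) combined with Kodaira-type vanishings for twists of $\CE_c^\vee \otimes \CE_c$ by the anticanonical class.

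The main obstacle is condition (iii), the off-diagonal Ext vanishing, which is exactly where the genericity hypothesis must intervene. A natural plan is deformation-theoretic: first establish fully faithfulness for a carefully chosen curve $C_0$ (for instance a hyperelliptic curve, or a nodal degeneration where $M_{C_0}(2, L)$ admits a more explicit description), then invoke the standard openness of full faithfulness in families to spread the property to a Zariski-open substack of $\mathcal{M}_g$. A direct cohomological attack using Hecke correspondences---which relate $\CE_{c_1}$ and $\CE_{c_2}$ via elementary modifications at one of the $c_i$ and can be used to produce resolutions computing the Ext groups---is a plausible alternative. In either strategy the essential technical difficulty is to prove that all cohomology of the rank-four sheaf $\CE_{c_1}^\vee \otimes \CE_{c_2}$ on the high-dimensional variety $M$ vanishes for generic distinct $c_1, c_2$, a delicate assertion that one expects to genuinely require the curve to be generic.
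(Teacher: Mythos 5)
Your high-level plan coincides with the paper's: verify the Bondal--Orlov pointwise criterion, handle a special curve first, then spread full faithfulness over the moduli stack by openness in families (the paper proves exactly such an openness statement, via the support of the cone of the unit $\Delta_*\CO \to \CE^!\circ\CE$). But as written the proposal has no proof: every step that carries actual content is deferred. The decisive gap is the base case. You say ``first establish fully faithfulness for a carefully chosen curve $C_0$ (for instance a hyperelliptic curve, or a nodal degeneration)'' without establishing it, and this is essentially the entire paper. For hyperelliptic $C$ the authors use the Desale--Ramanan identification of $M_C(2,L)$ with the variety $X_{g-1}$ of $(g-2)$-planes in the base locus of a pencil of quadrics in $\PP^{2g+1}$, prove that the universal bundle is (up to twist) a family $\CS_{g-1}$ of spinor bundles restricted from the isotropic Grassmannians $\OGr(g-1,2g+2)$ of the quadrics in the pencil, and then verify the Bondal--Orlov conditions by Borel--Bott--Weil computations on $\Gr(k,V)$ and $\OGr(k,V)$ (including explicit resolutions of the pushforwards $\tj_{P*}S_P$). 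None of this, nor any substitute for it, is present; the nodal-degeneration alternative would moreover not mesh with the openness argument you invoke, which requires a smooth proper family over the moduli stack of smooth curves.

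Two further points are glossed over. First, the ``diagonal'' conditions are not soft: one needs $\Hom(\CE_c,\CE_c)=\kk$ and $\Ext^{\ge 2}(\CE_c,\CE_c)=0$ on a $(3g-3)$-dimensional variety, and the latter does not follow from a Kodaira-type vanishing applied to the rank-four bundle $\CE_c^\vee\otimes\CE_c$ (it is not a line-bundle statement); in the paper these groups are computed, in the hyperelliptic case, by the same spinor/BBW machinery, yielding $\Ext^\bullet(\CE_c,\CE_c)=\kk\oplus\kk[-1]$. Second, the openness you invoke naturally lives on the stack of pairs $(C,L)$, while the theorem asserts genericity in $\CM_g$ for every odd-degree $L$; the paper needs an extra argument (properness of the forgetful map plus the isomorphism $M_C(2,L_1)\cong M_C(2,L_2)$ given by tensoring with a square root of $L_1^{-1}\otimes L_2$, which identifies the kernels up to twists) to descend the open locus from pairs to $\CM_g$. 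Your sketch is a correct road map, but the theorem remains unproved without the hyperelliptic computation and these supplementary steps.
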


Our strategy for the proof of Theorem~\ref{theorem-main} is the following.
First we show that the statement holds for hyperelliptic curves using
an explicit description of the moduli 
space $M$ due to Desale--Ramanan and a spinor interpretation of the universal bundle.
Then we use a deformation argument to deduce the result for generic curve $C$.

The proof in the hyperelliptic case is a generalization of a theorem of Bondal and Orlov stating that the derived category of a hyperelliptic curve of genus $g > 1$ 
embeds into the derived category of a smooth complete intersection $X$ of two quadrics of dimension $2g$ (see~\cite{BondalOrlov}).
We show that, more generally, such an embedding exists for the Hilbert scheme $X_k$ of linear $(k-1)$-dimensional subspaces lying on~$X$ for any $1 \leq k \le g-1$.
The scheme $X_k$ can be identified with the intersection of two orthogonal isotropic Grassmannians $\OGr(k,2g+2)$ in $\Gr(k,2g+2)$,
and the Fourier--Mukai kernel that we use to construct the functor is formed by the family of the restrictions to $X_k$ of the spinor bundles on the Grassmannians~$\OGr(k,2g+2)$.
When~$k = 1$, this functor coincides with the functor used by Bodal and Orlov, and when $k = g - 1$, we show that this functor coincides with the functor 
given by the universal family of stable bundles of rank~2 on~$C$, which proves Theorem~\ref{theorem-main} in the hyperelliptic case.

Our Theorem~\ref{theorem-main} gives a partial answer to the following general conjecture which we learned from Sergey Galkin.

\begin{conjecture}\label{conj:galkin}
The functor $\Phi_\CE:\BD(C) \to \BD(M)$ is fully faithful for any smooth curve of genus $g > 1$.
\end{conjecture}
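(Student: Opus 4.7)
The plan is to verify the Bondal--Orlov criterion for full faithfulness of $\Phi_\CE$ directly for an arbitrary smooth curve $C$ of genus $g > 1$. Denoting by $\CE_x := \CE|_{\{x\} \times M}$ the rank $2$ stable bundle on $M$ associated to a point $x \in C$, full faithfulness is equivalent to $\Hom_M(\CE_x, \CE_x) = \kk$, $\dim \Ext^1_M(\CE_x, \CE_x) = 1$, $\Ext^i_M(\CE_x, \CE_x) = 0$ for $i \geq 2$, and $\Ext^\bullet_M(\CE_x, \CE_y) = 0$ for all $x \neq y$. Organizing these $\Ext$ groups into a single object, set $\CR := Rp_{12*}(p_{13}^*\CE^\vee \otimes p_{23}^*\CE)$ on $C \times C$, where $p_{ij}$ are the projections from $C \times C \times M$. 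The conjecture is then equivalent to the statement $\CR \simeq \CO_\Delta \oplus \CO_\Delta[-1]$, where $\Delta \subset C \times C$ is the diagonal. The condition $\Hom(\CE_x, \CE_x) = \kk$ follows from stability; everything else is a genuine cohomological vanishing on $M$.

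My strategy for these vanishings is to express $\CE_x^\vee \otimes \CE_y$ through iterated extensions built from bundles whose cohomology on $M$ is universally controlled. A natural tool is the Hecke correspondence at $x \in C$: on the level of the universal bundle it produces short exact sequences relating $\CE_x$ to restrictions of $\CE$ at nearby points via elementary transformations, together with a $\PP^1$-bundle over $M$. Using such sequences in both factors of $\CE_x^\vee \otimes \CE_y$ should reduce the computation of $\CR$ to cohomology of Picard-type bundles on $M$ twisted by powers of the ample generator of $\Pic(M)$, which is governed by the Verlinde formula and by Picard-bundle vanishing theorems that hold uniformly in $C$. An alternative and probably cleaner route is to construct an explicit semi-orthogonal decomposition of $\BD(M)$ in which $\Phi_\CE(\BD(C))$ appears as a component; the existence of such a decomposition would yield full faithfulness for every curve simultaneously and fits the expectation that $M$ carries a rich Lefschetz-type exceptional collection compatible with the universal bundle.

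The principal obstacle is that Theorem~\ref{theorem-main} only provides the required vanishing on a nonempty open subset of the moduli stack of curves, and cohomology vanishing is an open condition, not a closed one; hence no purely formal deformation argument can upgrade generic full faithfulness to universal full faithfulness. The conjecture is precisely the assertion that the Ext groups $\Ext^i_M(\CE_x, \CE_y)$ do not jump for any special curve $C$, i.e.\ that the integer-valued functions $\dim H^i(M, \CE_x^\vee \otimes \CE_y)$ remain constant across the entire moduli of smooth curves. Establishing such rigidity --- either through a uniform Hecke reduction to universally controlled cohomology, or through an explicit exceptional collection on $M$ whose orthogonal contains $\Phi_\CE(\BD(C))$ --- is the central difficulty and is exactly what distinguishes the conjecture from the theorem proved in this paper.
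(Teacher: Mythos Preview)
The statement you are attempting to prove is labeled a \emph{conjecture} in the paper, and the paper does not prove it. The authors explicitly present Theorem~\ref{theorem-main} as only a partial answer to Conjecture~\ref{conj:galkin}, valid for generic (in particular, hyperelliptic) curves; they also note that Narasimhan has established the conjecture for all curves of genus $g \ge 4$ by completely different methods, leaving $g = 2, 3$ (non-generic) open at the time of writing. So there is no ``paper's own proof'' to compare against.

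Your proposal is not a proof either, and you are candid about this: the final paragraph correctly identifies the core obstruction, namely that cohomology vanishing is an open condition and cannot be propagated from the generic locus to the whole moduli space by any formal argument. The two strategies you sketch --- reducing $\Ext^\bullet_M(\CE_x,\CE_y)$ via Hecke correspondences to universally controlled cohomology, or constructing a semiorthogonal decomposition of $\BD(M)$ containing $\Phi_\CE(\BD(C))$ as a component --- are both reasonable research directions, but in the proposal they remain speculative: you do not carry out the Hecke reduction, you do not verify that the relevant Picard-bundle or Verlinde-type inputs suffice, and you do not produce the exceptional collection. In short, what you have written is an honest outline of why the conjecture is hard and what one might try, not a proof; this is appropriate given that the paper itself leaves the statement open.
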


In fact, this conjecture would give a positive answer in dimension one to a general question
asked by Alexei Bondal some time ago:
whether for any algebraic variety $X$ there is Fano variety $Y$ and a fully faithful embedding $\BD(X) \to \BD(Y)$
(see~\cite{bernardara2016,kiem2017all,kiem2015fano} for other results in this direction).

The paper is organized as follows.
In Section~\ref{section-general} we explain the proof of Theorem~\ref{theorem-main} and reduce it to some
cohomological computations on isotropic and usual Grassmannians. 
In Section~\ref{section-bbw} we perform the required cohomological computations.
In particular, we find nice resolutions for the pushforwards of the spinor bundles
from isotropic Grassmannians to usual Grassmannians (Proposition~\ref{lemma-resolution}) which are interesting per se.

When the first draft of the paper was written we were informed that a similar result was
established by Narasimhan in a completely different way. 
He proves Conjecture~\ref{conj:galkin} for all curves of genus $g \geq 4$, 
but, surprisingly, not for curves of smaller genus, see~\cite{narasimhan}.

The first author is grateful to the Max Planck Institute for Mathematics in Bonn, where his part of the present work was mostly completed in Spring~2015.

\section{The Main Theorem}
\label{section-general}

As already mentioned, the base field $\kk$ is algebraically closed of zero characteristic. 
All the functors between derived categories that we consider (pullbacks, pushforwards, tensor products) are derived.

\subsection{Intersection of isotropic Grassmannians}
\label{ss:ogr}

Let $C$ be a hyperelliptic curve of genus $g > 1$ and let $f:C \to \PP^1$ be
the hyperelliptic covering.
Choose a coordinate $t$ on $\PP^1$ such that the point $t = \infty$ is not on the branch divisor, and let $a_1,a_2,\dots,a_{2g+2}$ be the coordinates of the branch points.

Consider a vector space $V$ of dimension $2g + 2$ with coordinates $x_1,\dots,x_{2g+2}$ 
and the pencil of quadrics in $\PP(V)$ generated by
\begin{align*}
q_0(x_1,x_2,\dots,x_{2g+2}) &= a_1x_1^2 + a_2x_2^2 + \dots + a_{2g+2}x_{2g+2}^2,\\
q_\infty(x_1,x_2,\dots,x_{2g+2}) &= - (x_1^2 + x_2^2 + \dots + x_{2g+2}^2).
\end{align*}
For each $P \in \PP^1$ we denote by $Q_P$ the corresponding quadric in the pencil,
i.e.\ if $P=(s:t)$, then $Q_P$ is defined by the equation $q_P = sq_0+tq_\infty$.
Remark that there are exactly $2g + 2$ degenerate quadrics, and all the degenerate quadrics
are of corank~1.
We denote by $P_i = (1:a_i)$, $1 \le i \le 2g+2$, the points on $\PP^1$ corresponding to the degenerate quadrics in the pencil.
These points of $\PP^1$ will be referred to as {\em branching points}, and the other points as {\em non-branching}.


For each $1 \le k \le g$ and each point $P \in \PP^1$ consider the orthogonal isotropic Grassmannian 
\begin{equation*}
\OGr_P(k,V) := \{ U \in \Gr(k,V)\ |\ \PP(U) \subset Q_P \},
\end{equation*}
which parametrizes linear $(k-1)$-dimensional spaces on the quadric $Q_P$.

If $P$ is a non-branching point, the quadric $Q_P$ is non-singular and $\OGr_P(k,V)$ is a homogeneous variety of the simple algebraic group $\Spin(V)$.
If $P$ is branching, 
the quadratic form $q_P$ on $V$ is degenerate of corank 1, and the variety $\OGr_P(k,V)$ is no longer homogeneous.
Below we discuss its structure.

Fix a vector $v_P \in V$ generating the kernel of the quadratic form $q_P$ and denote by 
\begin{equation*}
\pi_P \colon V \to V_P := V/v_P
\end{equation*}
the natural projection.
Let $q'_P$ be the non-degenerate quadratic form induced on $V_P$ by $q_P$, 
and let $Q'_P \subset \PP(V_P)$ be the corresponding quadric.
Let $\OGr(k,V_P)$ be the corresponding orthogonal isotropic Grassmannian, and
consider the subvariety of $\OGr_P(k,V) \times \OGr(k,V_P)$ defined by
\begin{equation*}
\TOGr_P(k,V) = \{ (U_k,U'_k) \in \OGr_P(k,V) \times \OGr(k,V_P)\ |\ U_k \subset \pi_P^{-1}(U'_k) \}.
\end{equation*}
It comes with natural projections $p_1:\TOGr_P(k,V) \to \OGr_P(k,V)$ and $p_2:\TOGr_P(k,V) \to \OGr(k,V_P)$.
Denote by $\CU_k$ and $\CU'_k$ the tautological bundles on $\OGr_P(k,V)$ and $\OGr(k,V_P)$ respectively, as well as their pullbacks to $\TOGr_P(k,V)$. 
Set $H = c_1(\CU^\vee_k)$ and $H' = c_1(\CU^{\prime\vee}_k)$. 
When $k\leq g-1$ the line bundles $\CO(H)$ and~$\CO(H')$ are the ample generators of the Picard groups of $\OGr_P(k,V)$ and $\OGr(k,V_P)$ respectively.

\begin{proposition}
\label{proposition-togr}
The map $p_2$ is a $\PP^k$-fibration, and in particular $\TOGr_P(k,V)$ is a smooth projective variety.
The map $p_1$ is birational with the exceptional divisor $E$ isomorphic to the space of partial isotropic flags
\begin{equation*}
E \cong \OFl(k-1,k;V_P) \subset \TOGr_P(k,V),
\end{equation*}
and its map to $\OGr_P(k,V)$ can be represented as the composition
\begin{equation*}
E \cong \OFl(k-1,k;V_P) \to \OGr(k-1,V_P) \hookrightarrow \OGr_P(k,V),
\end{equation*}
where the first map is the natural projection and the last embedding identifies $\OGr(k-1,V_P)$ with the subscheme of $\OGr_P(k,V)$ parameterizing $U_k \subset V$ isotropic for $Q_P$ and containing $v_P \in V$.

Moreover, in the Picard group of $\TOGr_P(k,V)$ we have a linear equivalence
\begin{equation}\label{equation-ehh}
E = H - H'.
\end{equation}
\end{proposition}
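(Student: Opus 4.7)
The plan is to analyze the two projections $p_1$ and $p_2$ separately. The key observation throughout is that since $v_P$ spans the kernel of $q_P$, one has $q_P = q'_P \circ \pi_P$, so a subspace $U_k \subset V$ is isotropic for $q_P$ if and only if $\pi_P(U_k)$ is isotropic for $q'_P$.

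First I would study $p_2$. For $U'_k \in \OGr(k,V_P)$, the preimage $\pi_P^{-1}(U'_k)$ is a $(k+1)$-dimensional subspace of $V$ containing $v_P$, and by the observation above every $k$-dimensional subspace of it is automatically isotropic for $q_P$. Hence the fiber of $p_2$ over $U'_k$ is the full Grassmannian $\Gr(k, \pi_P^{-1}(U'_k)) \cong \PP^k$. Globalizing, $\TOGr_P(k,V)$ is a $\PP^k$-bundle associated to the rank-$(k+1)$ vector bundle $\CW = \pi_P^{-1}(\CU'_k)$ on $\OGr(k,V_P)$, which fits into an extension $0 \to \CU'_k \to \CW \to \CO \to 0$ corresponding to the section $v_P$ of the trivial bundle. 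In particular, $\TOGr_P(k,V)$ is smooth and projective.

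Next I would analyze $p_1$. If $v_P \notin U_k$, then $\pi_P|_{U_k}$ is injective, so $U'_k := \pi_P(U_k)$ is the unique allowed lift and $p_1$ is an isomorphism over this open locus. If $v_P \in U_k$, then $\pi_P(U_k)$ is only $(k-1)$-dimensional and any isotropic $k$-plane $U'_k \supset \pi_P(U_k)$ provides a valid lift. Therefore the exceptional locus of $p_1$ is the closed subscheme $E = \{(U_k,U'_k) : v_P \in U_k\}$, and the assignment $(U_k,U'_k) \mapsto (\pi_P(U_k), U'_k)$ identifies $E$ with $\OFl(k-1,k;V_P)$. The composition $p_1|_E \colon E \to \OGr_P(k,V)$ sends $(U_k,U'_k)$ to $U_k = \pi_P^{-1}(\pi_P(U_k))$, which visibly factors through the projection $\OFl(k-1,k;V_P) \to \OGr(k-1,V_P)$ followed by the embedding $W \mapsto \pi_P^{-1}(W)$; the image of the latter is precisely the locus of $U_k \in \OGr_P(k,V)$ containing $v_P$.

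Finally, for the Picard class identity I would use the tautological morphism $\varphi \colon \CU_k \to \CU'_k$ on $\TOGr_P(k,V)$ induced by the inclusion $\pi_P(U_k) \subset U'_k$. Both bundles have rank $k$, and the previous step shows that $\varphi$ is an isomorphism exactly on the complement of $E$. Consequently, $\det \varphi$ is a global section of $\det(\CU'_k) \otimes \det(\CU_k)^{\vee} \cong \CO(H - H')$ whose vanishing locus is $E$, yielding $\CO_{\TOGr_P(k,V)}(E) \cong \CO(H-H')$. The only point requiring care is the verification that the scheme-theoretic zero locus of $\det \varphi$ is reduced; this is expected to be the main obstacle but should reduce to a local calculation along a $\PP^k$-fiber of $p_2$: in adapted coordinates $\varphi$ acquires a block-diagonal form with a single $1\times 1$ block equal to the normal coordinate to $E$, giving a simple zero of $\det \varphi$ and hence the stated linear equivalence~\eqref{equation-ehh}.
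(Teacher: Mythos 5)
Your proof follows essentially the same route as the paper: identify $p_2$ as the Grassmannian bundle $\Gr(k,\pi_P^{-1}(\CU'_k))$ over $\OGr(k,V_P)$ to get the $\PP^k$-fibration and smoothness, analyze $p_1$ by where $v_P\in U_k$, and compute $\CO(E)$ as the determinant line of the degeneracy morphism $\CU_k\to\CU'_k$. One small slip: $\pi_P^{-1}(\CU'_k)$ sits in the extension $0\to\CO\to\pi_P^{-1}(\CU'_k)\to\CU'_k\to 0$ (with $\CO$ generated by $v_P$, the kernel of $\pi_P$), not $0\to\CU'_k\to\CW\to\CO\to 0$ as you wrote; this does not affect the rest of the argument.
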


We gather the varieties and maps from the proposition above on the following commutative diagram
\begin{equation}\label{diagram-togr}
\vcenter{\xymatrix{
& E = \OFl(k-1,k;V) \ar[dl]_{} \ar[r]^-{} & \TOGr_P(k,V) \ar[dl]_{p_1} \ar[dr]^{p_2} \\
\OGr(k-1,V_P) \ar[r] & \OGr_P(k,V) && \OGr(k,V_P)
}}
\end{equation}
In fact, one can show that the subscheme $\OGr(k-1,V_P)$ is the singular locus of $\OGr_P(k,V)$ and the birational map $p_1$ is its blowup.
However, we will not need this, so we leave this statement as an exercise.

\begin{proof}
If $U'_k \subset V_P$ is a $q'_P$-isotropic subspace of dimension $k$, then its preimage $\pi_P^{-1}(U'_k)$ is a $(k+1)$-dimensional $q_P$-isotropic subspace in $V$. 
Thus, any $k$-dimensional subspace in $\pi_P^{-1}(U'_k)$ gives a point of~$\TOGr_P(k,V)$. 
Therefore the fibers of $p_2$ are $\Gr(k,\pi_P^{-1}(U'_k)) \cong \Gr(k,k+1) \cong \PP^k$.
To be more precise, 
\begin{equation}\label{equation-togr-relative}
\TOGr_P(k,V) \cong \Gr_{\OGr(k,V_P)} (k,\CO \oplus \CU'_k) \cong \PP_{\OGr(k,V_P)} ((\CO \oplus \CU'_k)^\vee).
\end{equation}
This proves the first claim.

Now consider the morphism $p_1:\TOGr_P(k,V) \to \OGr_P(k,V)$, $(U_k,U'_k) \mapsto U_k$.
On the open subset of~$\OGr_P(k,V)$ given by the condition $v_P \not\in U_k$
the morphism $p_1$ has a section given by $U_k \mapsto (U_k,\pi_P(U_k))$. This section
and $p_1$ are mutually inverse isomorphisms over this set, hence $p_1$ is birational.
Note also that if $v_P \in U_k$, then $\pi_P(U_k)$ has dimension $k-1$, hence the fiber of $p_1$ over such $U_k$ is a quadric in the space $(\pi_P(U_k))^\perp / \pi_P(U_k)$ 
(the orthogonal is taken with respect to the nondegenerate form $q'_P$) of dimension $(2g+1) - 2(k-1) = 2g-2k+3$.
It follows that the indeterminacy locus of $p_1^{-1}$ is isomorphic to~$\OGr(k-1,V_P)$, and the exceptional divisor is isomorphic to $\OFl(k-1,k;V_P)$. 

To express the class $E$ of the exceptional divisor
in terms of $H$ and $H'$, note that under the identification~\eqref{equation-togr-relative}
the exceptional divisor is nothing but
\begin{equation}\label{equation-e-relative}
E = \Gr_{\OGr(k,V_P)} (k-1,\CU'_k) \subset \Gr_{\OGr(k,V_P)} (k,\CO \oplus \CU'_k) = \TOGr_P(k,V)
\end{equation}
Therefore, it coincides with the degeneracy locus of the natural morphism $\CU_k \xrightarrow{\ \pi_P\ } \CU'_k$ on $\TOGr_P(k,V)$.
Moreover, from~\eqref{equation-togr-relative} it is clear that the corank of this morphism on $E$ equals 1,
hence
\begin{equation*}
E = c_1(\CU'_k) - c_1(\CU_k) = -H' + H
\end{equation*}
which is precisely the relation we want.
\end{proof}

For each $1 \le k \le g$ we define the scheme
\begin{equation*}
X_k := \{ U \in \Gr(k,V)\ |\ \PP(U) \subset X \} = \bigcap_{P\in\PP^1} \OGr_P(k,V),
\end{equation*}
parameterizing all $k$-dimensional subspaces in $V$ isotropic for all quadrics in the pencil generated by $q_0$ and $q_\infty$.
Clearly, the scheme $X_k$ can be also written as the intersection $X_k = \OGr_{P_1}(k,V) \cap \OGr_{P_2}(k,V)$ for any pair of distinct points $P_1,P_2 \in \PP^1$.

\begin{lemma}\label{lemma-avoid}
For any branching point $P \in \PP^1$ the scheme $X_k$ avoids the indeterminacy locus of the rational map $p_1^{-1} \colon \OGr_P(k,V) \dashrightarrow \TOGr_P(k,V)$.
\end{lemma}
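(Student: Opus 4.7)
The plan is to use the description of the indeterminacy locus of $p_1^{-1}$ provided by Proposition~\ref{proposition-togr}: it is the subscheme $\OGr(k-1,V_P) \subset \OGr_P(k,V)$ parameterizing $k$-dimensional $q_P$-isotropic subspaces $U_k \subset V$ that contain the kernel vector $v_P$. So the lemma reduces to showing that no point $U_k \in X_k$ contains $v_P$ when $P$ is branching.

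The key observation is the following rigidity: for a fixed vector $v \in V$ the map
\begin{equation*}
\PP^1 \to \kk, \qquad P' \mapsto q_{P'}(v),
\end{equation*}
is a linear form in the homogeneous coordinates $(s:t)$ of $P'$, so it vanishes on a proper subset of $\PP^1$ unless it vanishes identically. In particular, $v_P$ lies in the kernel of $q_P$ but not in the kernel of $q_{P'}$ for $P' \neq P$ (all the singular quadrics in the pencil have corank $1$ and distinct kernel lines); more concretely, taking $P = P_i = (1:a_i)$ one has $v_P = e_i$ and $q_{P'}(e_i) = sa_i - t$, which vanishes only at $P' = P_i$. Hence there exists some $P' \in \PP^1$ with $q_{P'}(v_P) \neq 0$.

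Now suppose for contradiction that $U_k \in X_k$ contains $v_P$. Since $U_k$ is isotropic for every quadric in the pencil, in particular $v_P \in U_k \subset Q_{P'}$ for all $P' \in \PP^1$, so $q_{P'}(v_P) = 0$ for every $P'$, contradicting the previous paragraph. Therefore $v_P \notin U_k$, which shows $X_k \cap \OGr(k-1,V_P) = \emptyset$ inside $\OGr_P(k,V)$, and the lemma follows. There is no real obstacle here; the only thing to be careful about is identifying the indeterminacy locus correctly via Proposition~\ref{proposition-togr}, after which the argument is a short incidence computation in the pencil.
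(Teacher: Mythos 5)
Your proof is correct and follows essentially the same route as the paper: identify the indeterminacy locus (via Proposition~\ref{proposition-togr}) as the subspaces containing $v_P$, then observe that $v_P$ fails to be isotropic for some quadric of the pencil, so it lies in no $U_k$ parameterized by $X_k$. The paper simply takes $q_\infty$ (noting $q_\infty(v_P)\neq 0$) instead of your linear-form-in-$(s:t)$ computation, which is the same idea in a slightly more explicit form.
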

\begin{proof}
As it was shown in the proof of Proposition~\ref{proposition-togr}, the indeterminacy locus
consists precisely of the isotropic subspaces containing $v_P$. 
However, the vector $v_P$ is not isotropic for the quadratic form $q_\infty$, hence is not contained in any subspace parameterized by $X_k$.
\end{proof}

It follows from the previous lemma that for any branching point $P$ one has a commutative diagram
\begin{equation}\label{diagram-maps-ti-tj}
\vcenter{\xymatrix@C=5em{
& \TOGr_P(k,V) \ar[dr]^{\tj_P} \ar[d]^{p_1} \\
X_k \ar[r]_{i_P} \ar[ur]^{\ti_P} & \OGr_P(k,V) \ar[r]_{j_P}  & \Gr(k,V)
}}
\end{equation}
where $i_P$ and $j_P$ denote the natural embeddings, $\tj_P = j_P \circ p_1$, and $\ti_P = p_1^{-1} \circ i_P$.
To be able to write things in a uniform way, for any nonbranching point $P \in \PP^1$ put
\begin{equation*}
\TOGr_P(k,V) := \OGr_P(k,V),
\qquad
\ti_P := i_P,
\qquad 
\tj_P := j_P.
\end{equation*}
Then the diagram~\eqref{diagram-maps-ti-tj} makes sense for all points $P \in \PP^1$.

The scheme $X_k$ was investigated by Miles Reid in his thesis, in particular he proved the following

\begin{lemma}[\protect{\cite[Theorem~2.6]{MilesReid}}]\label{lemma:x-smooth}
For every $1 \le k \le g$ the scheme $X_k$ is a smooth variety of dimension 
\begin{equation*}
\dim(X_k) = k(2g - 2k + 1).
\end{equation*}
\end{lemma}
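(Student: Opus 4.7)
The approach is to realize $X_k$ as a complete intersection inside $\Gr(k,V)$ cut out by two sections of a natural vector bundle, verify the expected dimension, and then prove smoothness at every point by a direct tangent-space computation. Concretely, $X_k \subset \Gr(k,V)$ is the common zero scheme of the sections $\sigma_0, \sigma_\infty \in H^0(\Gr(k,V), \Sym^2 \CU_k^\vee)$ induced by the quadratic forms $q_0, q_\infty$. Since $\Sym^2 \CU_k^\vee$ has rank $k(k+1)/2$, the expected codimension is $k(k+1)$, giving expected dimension
\[
\dim \Gr(k,V) - k(k+1) = k(2g+2-k) - k(k+1) = k(2g-2k+1),
\]
which matches the claim.

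Smoothness of the expected dimension at a point $U \in X_k$ amounts to the surjectivity of the combined differential
\[
(d_0, d_\infty) \colon \Hom(U, V/U) \longrightarrow \Sym^2 U^\vee \oplus \Sym^2 U^\vee,
\qquad
d_P(\phi)(u, u') := q_P(u, \tilde\phi(u')) + q_P(u', \tilde\phi(u)),
\]
where $\tilde\phi : U \to V$ is any lift of $\phi \in \Hom(U, V/U)$ (the formula is well defined since $U$ is $q_P$-isotropic). To prove surjectivity I argue by duality: assume there exists a nonzero pair $(\alpha, \beta) \in \Sym^2 U \oplus \Sym^2 U$ in the annihilator of the image. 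Working in a simultaneous orthogonal diagonalization basis $\{e_i\}$ of $q_0$ and $q_\infty$ and letting $A \colon V \to V$ be the diagonal operator $A(e_i) = a_i e_i$, the annihilation condition unwinds to: viewing $\alpha, \beta$ as symmetric maps $U^\vee \to U$, one has $\beta = A \circ \alpha$ on $U^\vee$, and the image of this composition must lie inside $U$, i.e.\ $A(\Im \alpha) \subseteq U$.

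Set $W := \Im \alpha \subseteq U$, and choose a basis $u_1, \ldots, u_k$ of $U$ diagonalizing $\alpha = \sum_{i=1}^{s} \lambda_i \, u_i \otimes u_i$ with $\lambda_i \neq 0$ and $W = \langle u_1, \ldots, u_s \rangle$. A brief calculation of the coefficients of $\beta = A\alpha$ in this basis shows that the symmetry $\beta \in \Sym^2 U$ forces $A(W) \subseteq W$. Since $A$ is diagonal with pairwise distinct eigenvalues $a_i$, the only $A$-invariant subspaces of $V$ are the coordinate subspaces $\bigoplus_{i \in S} \kk e_i$; hence $W$ must contain some $e_i$ if it is nonzero. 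This contradicts the $q_\infty$-isotropy of $U$, because $q_\infty(e_i, e_i) = -1 \neq 0$. Therefore $\alpha = 0$, and consequently $\beta = A \alpha = 0$, completing the contradiction and yielding the desired surjectivity.

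The main obstacle is the step translating the abstract symmetry $\beta \in \Sym^2 U$ into the rigid invariance $A(W) \subseteq W$: this is the pressure point where the regularity of the pencil (equivalently, the distinctness of the apex parameters $a_i$) enters decisively, since without it $A$-invariant subspaces of $V$ could be much larger than the coordinate ones and the contradiction would collapse. The remaining ingredients — the expected-dimension count and the dualization of the surjectivity question — are purely formal linear algebra, and the avoidance statement of Lemma~\ref{lemma-avoid} provides an independent sanity check that each individual $d_P$ is already surjective at every point of $X_k$.
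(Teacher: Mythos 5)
Your proposal is correct, but it is genuinely different from what the paper does: the paper offers no proof of this lemma at all and simply imports it from Miles Reid's thesis \cite[Theorem~2.6]{MilesReid}, whereas you give a complete, self-contained argument. Your route — realizing $X_k$ as the zero scheme of the section of $\Sym^2\CU_k^\vee\oplus\Sym^2\CU_k^\vee$ determined by $(q_0,q_\infty)$, computing the expected dimension $k(2g+2-k)-k(k+1)=k(2g-2k+1)$, and checking surjectivity of the combined differential $\Hom(U,V/U)\to \Sym^2 U^\vee\oplus\Sym^2 U^\vee$ by dualizing — is sound, and the key linear-algebra step checks out: writing the annihilating pair as symmetric maps $\alpha,\beta\colon U^\vee\to U$, the annihilation condition is exactly $\beta=A\alpha$ with $A(\Im\alpha)\subseteq U$; diagonalizing $\alpha=\sum_{i\le s}\lambda_i u_i\otimes u_i$ and comparing coefficients of $u_j\otimes u_i$ with $j>s$ indeed forces $A(W)\subseteq W$ for $W=\Im\alpha$, and since the $a_i$ are pairwise distinct (the branch points are distinct), a nonzero $A$-invariant $W$ would contain a coordinate vector $e_i$, contradicting $q_\infty$-isotropy of $U$. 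This is in spirit close to Reid's original argument, and it buys the paper independence from the citation. Two small caveats: the lemma asserts $X_k$ is a smooth \emph{variety} of the stated dimension, and your argument yields smoothness and (pure) dimension at every point but says nothing about nonemptiness or irreducibility, which are part of Reid's statement and are implicitly used later (e.g.\ $X_{g-1}\cong M_C(2,L)$); and the closing appeal to Lemma~\ref{lemma-avoid} is only the tangential observation you label it as — surjectivity of each individual $d_P$ along $X_k$ is neither needed for, nor sufficient to give, smoothness of the intersection, so it should not be mistaken for part of the proof.
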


We will also need the following

\begin{lemma}\label{lemma-tor-independence}
For any points $P_1 \ne P_2$ we have
\begin{equation*}
\TOGr_{P_1}(k,V) \times_{\Gr(k,V)} \TOGr_{P_2}(k,V) = X_k,
\end{equation*}
and the fiber product is Tor-independent.
Moreover, for any $P\in\PP^1$ the subvariety $\ti_P(X_k) \subset \TOGr_P(k,V)$ is the zero locus 
of a regular section of the vector bundle~$\Sym^2\CU_k^\vee$.
\end{lemma}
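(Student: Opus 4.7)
The plan is to establish the regular-section statement first and then deduce the fiber-product statement from it by a Koszul argument.

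\emph{Regular section.} Pick any $P' \ne P$ in $\PP^1$. The quadratic form $q_{P'}$ on $V$ defines a canonical section $\sigma^{\Gr}_{P'}$ of $\Sym^2\CU_k^\vee$ on $\Gr(k,V)$ whose zero scheme is $\OGr_{P'}(k,V)$. Pulling back via $\tj_P$ yields a section $\sigma$ of $\Sym^2\CU_k^\vee$ on $\TOGr_P(k,V)$ whose set-theoretic zero locus consists of $k$-subspaces $U$ isotropic for both $q_P$ and $q_{P'}$; since any two distinct members of the pencil span it, this locus equals $X_k$, and by Lemma~\ref{lemma-avoid} each such $U$ has a unique lift $\ti_P(U) \in \TOGr_P \setminus E$. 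To upgrade to a scheme-theoretic equality, observe that $X_k \subset \Gr(k,V)$ is the common zero locus of $\sigma^{\Gr}_P \oplus \sigma^{\Gr}_{P'}$; by Lemma~\ref{lemma:x-smooth} and a dimension count it has codimension $k(k+1)$ in $\Gr(k,V)$, the rank of $(\Sym^2\CU_k^\vee)^{\oplus 2}$, so the combined section is transverse at every $U \in X_k$. Since Lemma~\ref{lemma-avoid} also places $\ti_P(U)$ in $\TOGr_P \setminus E$, on which $\tj_P$ restricts to an isomorphism onto the smooth open $\OGr_P \setminus \OGr(k-1,V_P) \subset \OGr_P \subset \Gr(k,V)$, the image of $d\tj_P$ at $\ti_P(U)$ is the kernel of $d\sigma^{\Gr}_P$ at $U$; transversality of the pair then forces $d\sigma^{\Gr}_{P'}$ to be surjective on this kernel, giving transversality of $\sigma$ itself. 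Hence $\sigma$ is a regular section with reduced zero scheme $\ti_P(X_k)$.

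\emph{Fiber product.} Applying the regular-section statement to $\TOGr_{P_1}$ with auxiliary point $P_2$, the Koszul complex of $\sigma = \tj_{P_1}^* \sigma^{\Gr}_{P_2}$ resolves $\ti_{P_1,*} \CO_{X_k}$ and coincides with the $L\tj_{P_1}^*$-pullback of the Koszul complex on $\Gr(k,V)$ resolving $j_{P_2,*}\CO_{\OGr_{P_2}}$. This gives $L\tj_{P_1}^* j_{P_2,*}\CO_{\OGr_{P_2}} \cong \ti_{P_1,*}\CO_{X_k}$, i.e.\ Tor-independence of $\tj_{P_1}$ and $j_{P_2}$ over $\Gr(k,V)$ together with the scheme-theoretic identification $\TOGr_{P_1} \times_{\Gr(k,V)} \OGr_{P_2} = X_k$. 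Rewriting
\begin{equation*}
\TOGr_{P_1} \times_{\Gr(k,V)} \TOGr_{P_2} = X_k \times_{\OGr_{P_2}} \TOGr_{P_2}
\end{equation*}
and using Lemma~\ref{lemma-avoid} once more, $X_k$ lies in the open locus of $\OGr_{P_2}$ over which $p_1$ is an isomorphism, so the right-hand fiber product is $X_k$ itself with all higher Tors vanishing. Combining yields the full statement.

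\emph{Main obstacle.} The delicate step is the transversality of $\sigma$: one must convert the smoothness of $X_k \subset \Gr(k,V)$ (Lemma~\ref{lemma:x-smooth}) into surjectivity of $d\sigma$ on $\TOGr_P$, which a priori is a nontrivial birational modification of $\OGr_P$. Lemma~\ref{lemma-avoid} is essential for confining the tangent-space analysis to the iso-locus of $p_1$, where the linear algebra reduces to a standard calculation on the usual Grassmannian. Once transversality is in hand, the remainder of the argument is formal Koszul and base-change.
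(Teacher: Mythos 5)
Your proof is correct, but it is organized quite differently from the paper's. The paper proves the fiber-product identification directly, by exhibiting mutually inverse morphisms: $\ti_{P_1},\ti_{P_2}$ give a map $X_k \to \TOGr_{P_1}(k,V)\times_{\Gr(k,V)}\TOGr_{P_2}(k,V)$, while the projections $\TOGr_{P_i}(k,V)\to\OGr_{P_i}(k,V)$ give a map to $\OGr_{P_1}(k,V)\times_{\Gr(k,V)}\OGr_{P_2}(k,V)=X_k$, and Proposition~\ref{proposition-togr} shows they are inverse to each other; Tor-independence is then obtained by citing the expected-dimension criterion of~\cite[Corollary~2.27]{KuznetsovHyperplane}, and the regular-section claim is deduced from Proposition~\ref{proposition-togr} together with the standard fact that each $\OGr_{P'}(k,V)\subset\Gr(k,V)$ is cut out by a regular section of $\Sym^2\CU_k^\vee$ (essentially a codimension count, since $\TOGr_P(k,V)$ is smooth, hence Cohen--Macaulay). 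You reverse the logic: you first establish the regular-section statement by a transversality computation (converting the smoothness of $X_k$ from Lemma~\ref{lemma:x-smooth} into surjectivity of the differential of $\tj_P^*\sigma^{\Gr}_{P'}$, using Lemma~\ref{lemma-avoid} to stay in the locus where $p_1$ is an isomorphism), and then obtain both the scheme-theoretic identification of the fiber product and Tor-independence from the Koszul resolution and composition of derived base changes, with no external citation. The geometric inputs are the same (Proposition~\ref{proposition-togr}, Lemma~\ref{lemma-avoid}, Reid's dimension/smoothness), so what your route buys is self-containedness --- you in effect reprove the expected-dimension Tor-independence criterion in this special case --- and your transversality step yields slightly more than the paper's codimension count, namely that the zero scheme of $\tj_P^*\sigma^{\Gr}_{P'}$ is smooth and reduced, i.e.\ literally $\ti_P(X_k)$; the cost is a longer argument, plus one step you leave implicit, namely that the section of $\Sym^2\CU_k^\vee$ on $\Gr(k,V)$ defined by $q_{P_2}$ is regular also for a branching (corank-one) $P_2$, which again follows from the dimension statement in Proposition~\ref{proposition-togr} and is the same fact the paper invokes.
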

\begin{proof}
The maps $\ti_{P_1}$ and $\ti_{P_2}$ induce a natural map $X_k\to \TOGr_{P_1}(k, V)\times_{\Gr(k,V)}\TOGr_{P_2}(k, V)$. On the other hand, the projections $\TOGr_{P_i}(k,V)\to\OGr_{P_i}(k,V)$ induce a morphism
\begin{equation*}
   \TOGr_{P_1}(k, V)\times_{\Gr(k,V)}\TOGr_{P_2}(k, V)\to\OGr_{P_1}(k, V)\times_{\Gr(k,V)}\OGr_{P_2}(k, V)
   = X_k.
\end{equation*}
It is immediate from Proposition~\ref{proposition-togr}
that these morphisms are mutually inverse.
Tor-independence follows from the fact that~$X_k$ has expected dimension and from~\cite[Corollary 2.27]{KuznetsovHyperplane}.
Finally, the last statement follows from Proposition~\ref{proposition-togr}
and the fact that each isotropic Grassmannian is the zero locus of a regular section of the vector bundle $\Sym^2\CU_k^\vee$ on $\Gr(k,V)$.
\end{proof}

Below we show that $X_k$ is a Fano variety when $k \le g - 1$. 
This has a useful implication about the Picard group of the product $X_k \times C$.

\begin{lemma}\label{lemma:pic-x-c}
If $k \le g-1$ then $X_k$ is a Fano variety and 
\begin{equation*}
\Pic(X_k \times C) \cong \Pic(X_k) \oplus \Pic(C).
\end{equation*}
In particular, every line bundle on~$X_k \times C$ is an exterior tensor product of a line bundle on $X_k$ and a line bundle on~$C$.
\end{lemma}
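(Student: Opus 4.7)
The plan is to deduce both assertions from a computation of the canonical bundle of $X_k$ via adjunction.

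First, I would realize $X_k \subset \Gr(k,V)$ as the zero locus of a regular section of $(\Sym^2\CU_k^\vee)^{\oplus 2}$. The last assertion of Lemma~\ref{lemma-tor-independence} already writes each $\OGr_{P_i}(k,V)$ as the zero locus in $\Gr(k,V)$ of a regular section of $\Sym^2\CU_k^\vee$; choosing two distinct points $P_1, P_2 \in \PP^1$ and combining the Tor-independence assertion of the same lemma with the expected-dimension assertion of Lemma~\ref{lemma:x-smooth}, the two sections together cut out $X_k$ regularly (the codimension $k(k+1)$ matches the rank of $(\Sym^2\CU_k^\vee)^{\oplus 2}$). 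Using $\omega_{\Gr(k,V)} = \CO(-(2g+2)H)$ and $\det(\Sym^2\CU_k^\vee) = \CO((k+1)H)$, adjunction then gives
\begin{equation*}
\omega_{X_k} \cong \CO\bigl(-2(g-k)\bigr)\big|_{X_k},
\end{equation*}
and since $H|_{X_k}$ is ample as the restriction of the Pl\"ucker polarization and $g-k \geq 1$ by hypothesis, $X_k$ is Fano.

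For the Picard-group splitting, Kodaira vanishing in characteristic zero applied to this Fano variety yields $H^1(X_k, \CO_{X_k}) = 0$, so $\Pic^0(X_k)$ is zero-dimensional and $\Pic(X_k)$ is discrete on $\kk$-points. I would then invoke the standard see-saw argument: for a line bundle $\CL$ on $X_k \times C$ the classifying morphism $C \to \Pic(X_k)$, $c \mapsto \CL|_{X_k \times \{c\}}$, is constant since $C$ is connected; choosing $L_1 \in \Pic(X_k)$ to be the common value, the line bundle $\CL \otimes p_1^* L_1^{-1}$ is fiberwise trivial over $p_2 \colon X_k \times C \to C$, and hence equals $p_2^* L_2$ with $L_2 = (p_2)_*(\CL \otimes p_1^* L_1^{-1}) \in \Pic(C)$.

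The only mildly nontrivial input is the regularity/Tor-independence step in the adjunction computation, which is already packaged into Lemma~\ref{lemma-tor-independence} together with the dimension count in Lemma~\ref{lemma:x-smooth}; everything else is a routine numerical computation followed by a formal see-saw argument.
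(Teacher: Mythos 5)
Your proof is correct and follows essentially the same route as the paper: realize $X_k$ as the zero locus of a regular section of $\Sym^2\CU_k^\vee \oplus \Sym^2\CU_k^\vee$ on $\Gr(k,2g+2)$, compute $\omega_{X_k} = \CO(-2(g-k))|_{X_k}$ by adjunction to get the Fano property, and then use Kodaira vanishing to kill $H^1(X_k,\CO_{X_k})$ and split the Picard group. The only difference is that you spell out the see-saw argument deducing the splitting from $H^1(X_k,\CO_{X_k})=0$, which the paper leaves implicit.
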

\begin{proof}
By definition $X_k$ is the zero locus of a regular (by Lemma~\ref{lemma:x-smooth}) global section of the vector bundle~$\Sym^2\CU_k^\vee \oplus \Sym^2\CU_k^\vee$ on the Grassmannian $\Gr(k,V) \cong \Gr(k,2g+2)$.
The canonical class of the Grassmannian is $\CO(-2g-2)$, and the first Chern class of $\Sym^2\CU_k^\vee$ is $k+1$. 
Therefore, the canonical class of $X_k$ is
\begin{equation*}
- 2g - 2 + 2k + 2 = -2(g-k),
\end{equation*}
and for $k \le g-1$ it is anti-ample.
It follows from Kodaira vanishing that $H^{1,0}(X_k) = H^{0,1}(X_k) = 0$, hence $\Pic(X_k \times C) \cong \Pic(X_k) \oplus \Pic(C)$.
\end{proof}

Note that for $k = g$ the statement of the previous lemma is no longer true.
Indeed, by~\cite[Theorem~4.8]{MilesReid} and~\cite[Theorem~2]{DR} we have $X_g \cong \Pic^0(C)$ and the Poincar\'e line bundle on $\Pic^0(C) \times C$ is not an exterior tensor product.

\subsection{Spinor bundles}\label{ss:spinor}

Recall that if $V$ is a vector space of even dimension $2n$ and $q$ is a nondegenerate quadratic form on $V$, for each $k \le n - 1$ 
the isotropic orthogonal Grassmannian $\OGr(k,V)$ is a homogeneous space of the simple algebraic group $\Spin(V)$ of type $D_n$.
If $k \le n - 2$ then $\OGr(k,V)$ corresponds to the maximal parabolic subgroup $\BP_k \subset \Spin(V)$ associated with the vertex $k$ of the Dynkin diagram~$D_n$.
If $k = n - 1$, it corresonds to the submaximal parabolic $\BP_{n-1,n} = \BP_{n-1} \cap \BP_n$.

For every $k \le n-2$ the Grassmannian $\OGr(k,V)$ carries a pair of irreducible $\Spin(V)$-equivariant vector bundles $(S_{k,+},S_{k,-})$ such that 
\begin{equation*}
r(S_{k,\pm}) = 2^{n-1-k},
\qquad 
\det(S_{k,\pm}) = \CO(2^{n-2-k}H),
\end{equation*}
where as before $H = c_1(\CU_k^\vee)$ is the ample generator of $\Pic(\OGr(k,V))$, and $\CU_k$ is the tautological bundle on $\OGr(k,V)$.

Analogously, if $V'$ is a vector space of odd dimension $2n + 1$ and $q'$ is a nondegenerate quadratic form on $V'$, for each $k \le n$ 
the isotropic orthogonal Grassmannian $\OGr(k,V')$ is a homogeneous space of the simple algebraic group $\Spin(V')$ of type $B_n$
corresponding to the maximal parabolic $\BP_k \subset \Spin(V')$.
This time for every $k \le n - 1$ the Grassmannian $\OGr(k,V')$ carries one irreducible $\Spin(V')$-equivariant spinor vector bundle $S_k$ such that 
\begin{equation*}
r(S_k) = 2^{n-k},
\qquad 
\det(S_k) = \CO(2^{n-1-k}H'),
\end{equation*}
where as before $H' = c_1(\CU_k^{\prime\vee})$ is the ample generator of $\Pic(\OGr(k,V'))$, and $\CU'_k$ is the tautological bundle on~$\OGr(k,V')$.
For $k = n$ the spinor bundle generates $\Pic(\OGr(k,V'))$ and $\CO(H') \cong S_k^{\otimes 2}$.

The bundles $S_{k,\pm}$ (in the even-dimensional case) as well as the bundle $S_k$ (in the odd-dimensional case) are called {\sf spinor bundles}.
They were defined in~\cite{ottaviani1988spinor} as pushforwards of line bundles from isotropic flag varieties.
We are going to use instead a description of spinor bundles in terms of the Clifford algebra 
of the natural family of quadrics over the orthogonal Grassmannian given in~\cite[Section~6]{KuznetsovIsotropic}.


Now let us return to the pencil of quadrics $Q_P$ discussed in the previous subsection.
For every point $P \in \PP^1$ we have defined a variety $\TOGr_P(k,V)$ that is equal to $\OGr_P(k,V)$ if $P$ is non-branching and is a $\PP^k$-fibration over $\OGr(k,V_P)$ if $P$ is branching.
We define the following vector bundles on these varieties:
\begin{itemize}
\item 
if $P$ is non-branching, $S_{P,k,\pm}$ is the corresponding spinor bundle on $\TOGr_P(k,V) = \OGr_P(k,V)$;
\item 
if $P$ is branching, $S_{P,k} = p_2^*S_k$ is the pullback to $\TOGr_P(k,V)$ of the spinor bundle from $\OGr(k,V_P)$.
\end{itemize}
Note that all these bundles have rank $2^{g-k}$.
Restricting them to $X_k$, we obtain a family of vector bundles $\ti_P^*S_{P,k,\pm}$ (for non-branching $P$) and $\ti_P^*S_{P,k}$ (for branching $P$) on $X_k$.
In Proposition~\ref{proposition-construction-spinor} below we will show that they form a single vector bundle $\CS_k$ of rank $2^{g-k}$ on the product $X_k \times C$.
Meanwhile, we discuss some properties of these bundles that will be useful later.

\begin{lemma}\label{lemma-spinor-dual}
We have the following isomorphisms of sheaves on $\Gr(k,V)$:
\begin{equation*}
\begin{aligned}
\tj_{P*}(S_{P,k,\pm}^\vee) & \cong 
\tj_{P*}(S_{P,k,\pm})(-1), && \text{if $P$ is not branching and $g-k$ is odd,}\\
\tj_{P*}(S_{P,k,\pm}^\vee) & \cong 
\tj_{P*}(S_{P,k,\mp})(-1), && \text{if $P$ is not branching and $g-k$ is even,}\\
\tj_{P*}(S_{P,k}^\vee) & \cong \tj_{P*}(S_{P,k})(-1), && \text{if $P$ is branching.}
\end{aligned}
\end{equation*}
\end{lemma}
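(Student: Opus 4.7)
The plan is to establish, for each point $P$, an intrinsic duality isomorphism for the spinor bundle on $\TOGr_P(k,V)$ and then push it forward via $\tj_P$. Since $\tj_P^*\CO_\Gr(1) = \CO(H)$, the projection formula lets us translate twists by $\CO_\Gr(-1)$ downstairs into twists by $\CO(-H)$ upstairs, so the statement of the lemma reduces (up to $\CO(E)$ contributions in the branching case) to identities of the form $S^\vee \cong S'(-H)$ on $\TOGr_P(k,V)$.

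For non-branching $P$ the variety $\TOGr_P(k,V) = \OGr_P(k,V)$ is a $\Spin(V)$-homogeneous space of type $D_{g+1}$, and I would derive the intrinsic isomorphisms
\[
S_{P,k,\pm}^\vee \cong S_{P,k,\pm}(-H) \ \ (\text{if } g-k \text{ is odd}),
\qquad
S_{P,k,\pm}^\vee \cong S_{P,k,\mp}(-H) \ \ (\text{if } g-k \text{ is even})
\]
directly from the Clifford-algebra description of spinor bundles in \cite[Section~6]{KuznetsovIsotropic}. The point is that $S_{P,k,\pm}$ are the two irreducible modules over the bundle of even-rank Clifford algebras $\Clif(\CU_k^\perp/\CU_k)$, where $\CU_k^\perp/\CU_k$ has rank $2(g+1-k)$; the canonical antiautomorphism of the Clifford algebra yields a duality on these modules, and tracking its central character produces the twist by $\CO(-H)$. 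The parity of $g-k$ corresponds to whether the two half-spin representations $\Sigma_\pm$ of $\Spin(2m)$ with $m=g+1-k$ are self-dual or swap under duality, exactly reproducing the two cases.

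For branching $P$, I would first prove on $\TOGr_P(k,V)$ the analogous intrinsic duality
\[
S_{P,k}^\vee \cong S_{P,k}(-H').
\]
As $S_{P,k} = p_2^*S_k$ and $H' = p_2^*c_1(\CU'^\vee_k)$, this pulls back from the self-duality $S_k^\vee \cong S_k(-H')$ of the spinor bundle on the type-$B_g$ Grassmannian $\OGr(k,V_P)$, a standard consequence of the self-duality of the unique spinor representation of $\Spin(2g+1)$. Substituting the relation $E = H - H'$ from Proposition~\ref{proposition-togr} and applying the projection formula, the lemma reduces in the branching case to the identity
\[
\tj_{P*}(S_{P,k}\otimes\CO(E)) \cong \tj_{P*}(S_{P,k}).
\]
To prove this, I apply $R\tj_{P*}$ to the short exact sequence
$0 \to S_{P,k} \to S_{P,k}(E) \to S_{P,k}(E)|_E \to 0$
and show that $R\tj_{P*}(S_{P,k}(E)|_E)=0$. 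Since $\tj_P = j_P \circ p_1$ with $j_P$ a closed embedding, this amounts to a fiberwise vanishing along the map $p_1|_E \colon E \cong \OFl(k-1,k;V_P) \to \OGr(k-1,V_P)$, whose fibers are smooth quadrics of dimension $2g-2k+1$. Using the identification of $E$ as a partial isotropic flag variety, both $S_{P,k}|_E$ and $\CO(E)|_E$ admit explicit descriptions in terms of tautological bundles pulled back along the two projections of $\OFl(k-1,k;V_P)$; the required vanishing then follows from Borel--Bott--Weil applied on the quadric fibers.

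The main obstacle is the Borel--Bott--Weil vanishing in the branching case, which requires a careful weight computation for $S_{P,k}(E)|_E$ restricted to the quadric fibers of $p_1|_E$. Once this vanishing is established, the remaining ingredients reduce to the standard dualities of Clifford modules and of spinor representations, which are classical.
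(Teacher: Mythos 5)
Your proposal follows essentially the same route as the paper: in the non-branching case the duality $S_{P,k,\pm}^\vee \cong S_{P,k,\pm}(-H)$ or $S_{P,k,\mp}(-H)$ comes from the Clifford-algebra description of spinor bundles in \cite[Section~6]{KuznetsovIsotropic}, and in the branching case one uses $S_{P,k}^\vee \cong S_{P,k}(-H') \cong S_{P,k}(E-H)$ together with the vanishing of the pushforward of the $E$-supported correction term along the quadric fibers of $E \cong \OFl(k-1,k;V_P) \to \OGr(k-1,V_P)$. The only remark is that the step you single out as the main obstacle is in fact immediate: $S_{P,k}(E-H)$ restricts to each quadric fiber as the twist $S(-1)$ of the spinor bundle on the quadric, which is a standard acyclic bundle, so no further Borel--Bott--Weil weight computation is needed.
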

\begin{proof}
If $P$ is not branching, then $\TOGr_P(k,V)=\OGr(k,V)$ and by~\cite[Proposition~6.6]{KuznetsovIsotropic} the sheaf $S_{P,k,\pm}^\vee$ is isomorphic to~$S_{P,k,\pm}(-H)$ or~$S_{P,k,\mp}(-H)$
depending on the parity of $g - k$.
In the case when $P$ is branching it follows from \emph{loc.\,cit.}\/ that $S_{P,k}^\vee \cong S_{P,k}(-H') \cong S_{P,k}(E-H)$
(we use the notation from Proposition~\ref{proposition-togr}), hence, taking into account~\eqref{equation-ehh}, we get a short exact sequence
\begin{equation*}
0 \to S_{P,k}(-H) \to S_{P,k}^\vee \to S_{P,k}(E-H)|_{E} \to 0.
\end{equation*}
Note that $p_{1*}(S_{P,k}(E-H)|_E) = 0$.
Indeed, the fibers of the map $E = \OFl(k-1,k;V_P) \to \OGr(k-1,V_P)$ are quadrics,
and $S_{P,k}(E-H)$ restricts to each of these quadrics as the twist $S(-1)$ of the spinor bundle on the quadric,
which is an acyclic vector bundle.
Therefore, $p_{1*}(S_{P,k}(-H)) \cong p_{1*}(S_{P,k}^\vee)$.
Applying the functor $j_{P*}$ and taking into account the diagram~\eqref{diagram-maps-ti-tj}, we deduce the required isomorphism.
%
\end{proof}

Now we construct a global vector bundle on $X_k \times C$ whose restriction to the fibers of $X_k \times C$ over $C$ gives the bundles $S_{P,k\pm}$ and $S_{P,k}$ constructed above.


\begin{proposition}\label{proposition-construction-spinor}
There is a vector bundle $\CS_k$ 
on $X_k \times C$ such that for any point $P \in \PP^1$
\begin{itemize}
\item 
if $P$ is not a branching point and $f^{-1}(P) = \{x_+,x_-\} \subset C$, then 
\begin{equation}\label{eq:spinor-non-branching}
\ti_P^*S_{P,k,+} \cong \CS_{k,x_+} 
\qquad\text{and}\qquad
\ti_P^*S_{P,k,-} \cong \CS_{k,x_-};
\end{equation}
\item 
if $P$ is a branching point and $f^{-1}(P) = x \in C$, then
\begin{equation}\label{eq:spinor-branching}
\ti_P^*S_{P,k} \cong \CS_{k,x}.
\end{equation}
\end{itemize}
\end{proposition}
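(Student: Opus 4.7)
The plan is to glue the individual spinor bundles $\ti_P^*S_{P,k,\pm}$ and $\ti_P^*S_{P,k}$ into a single vector bundle on $X_k\times C$ using the sheaf of even Clifford algebras of the pencil of quadrics. To this end, I would first form on $\PP^1$ the sheaf of even Clifford algebras $\CB_0$ associated to the universal quadratic form $q=sq_0+tq_\infty$ of the pencil, suitably twisted by $\CO_{\PP^1}(1)$ so as to be a locally free $\CO_{\PP^1}$-algebra of rank $2^{2g+1}$. A classical computation, going back to Weil and made explicit in this exact setting by Reid and Bhosle, identifies the relative spectrum of its centre $Z(\CB_0)$ with the hyperelliptic cover $f\colon C\to\PP^1$: at a non-branching $P$, the two points $x_\pm\in f^{-1}(P)$ correspond to the two primitive central idempotents $e_\pm\in Z(\CB_{0,P})$ splitting $\CB_{0,P}$ as a product of two matrix algebras of size $2^g$, while at a branching $P$ the unique preimage corresponds to the radical of the local, non-semisimple centre.

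Next, I would pull $\CB_0$ back to $X_k\times\PP^1$ and construct a locally free right $\CB_0$-module $\CM$ on this base whose fibres realise the relevant spinor modules. The natural candidate is the relative version of the Kuznetsov construction of \cite[\S 6]{KuznetsovIsotropic}: by definition of $X_k$ the tautological subbundle $\CU_k$ is isotropic for every $q_P$, so the induced form on $\CU_k^{\perp_P}/\CU_k$ is non-degenerate of rank $2g+2-2k$ for non-branching $P$ and has a one-dimensional kernel (generated by the image of $v_P$) for branching $P$. The corresponding reduced Clifford algebra produces a $\CB_0$-module $\CM$ whose restriction to a non-branching fibre $\{P\}\times X_k$ is $\ti_P^*(S_{P,k,+})\oplus\ti_P^*(S_{P,k,-})$ with the two summands being the $e_\pm$-eigenspaces. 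At a branching $P$ the passage to the quotient $V_P=V/v_P$ in the Clifford construction is forced by the presence of the radical, and the module naturally lives on the $\PP^k$-fibration $\TOGr_P(k,V)\to\OGr(k,V_P)$ of Proposition~\ref{proposition-togr}, restricting to $\ti_P^*(S_{P,k})=\ti_P^*(p_2^*S_k)$.

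Finally, I would define $\CS_k$ on $X_k\times C$ by pulling $\CM$ back along $\id_{X_k}\times f$ and taking the summand on which the pulled-back centre acts through the natural projection $Z(\CB_0)\otimes_{\CO_{\PP^1}}\CO_C\to\CO_C$. This is a locally free sheaf of rank $2^{g-k}$, and the identifications \eqref{eq:spinor-non-branching} and \eqref{eq:spinor-branching} follow directly from the fibrewise description of $\CM$ above, since the two idempotents $e_\pm$ become separated over $C$ and the single branching spinor module is automatically preserved. The main technical obstacle, which I expect to be the subtlest part of the argument, is the analysis at the branching points: one has to verify that $\CM$ extends across the degenerate fibres producing exactly $\ti_P^*(S_{P,k})$ rather than some other limit of the generic half-spinor bundles, and that the central action extends compatibly across the ramification of $f$. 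This is precisely why the modifications $\TOGr_P(k,V)$, rather than the singular Grassmannians $\OGr_P(k,V)$, are needed in the ambient setup of Section~\ref{ss:ogr}.
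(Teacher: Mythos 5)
Your overall strategy---glueing the fibrewise spinor bundles by means of Clifford-algebra data attached to the pencil---is the right circle of ideas, but as written the argument has a genuine gap exactly where you flag it, and that gap is the whole content of the proposition. You work with the even Clifford algebra $\CB_0$ of the degenerate pencil on $\PP^1$, pull it and a module $\CM$ back along $\id_{X_k}\times f$, and propose to define $\CS_k$ as ``the summand on which the pulled-back centre acts through the projection $Z(\CB_0)\otimes_{\CO_{\PP^1}}\CO_C\to\CO_C$''. Over a non-branching point this is fine: the centre is \'etale there and the two idempotents $e_\pm$ separate after base change to $C$. But over a ramification point of $f$ the relative spectrum of $Z(\CB_0)\otimes_{\CO_{\PP^1}}\CO_C$ is $C\times_{\PP^1}C$, whose two components (the diagonal and the graph of $\tau$) meet at the ramification points; there the pulled-back module has no such direct-sum decomposition, the would-be idempotents collide, and ``taking the summand'' is not defined. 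Consequently the existence of a locally free extension of the generic half-spinor family across the fibres over the $x_i$, and the identification of its fibre there with $\ti_P^*S_{P,k}=\ti_P^*p_2^*S_k$ rather than some other limit or elementary modification, is not established---you explicitly defer this, but nothing in the proposal supplies the missing argument. The same issue affects the construction of $\CM$ itself over the branching fibres, where $q_P$ has corank $1$ and the induced form on $\CU_k^{\perp_P}/\CU_k$ is degenerate, so the ``relative Kuznetsov construction'' you invoke does not apply verbatim.

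The paper circumvents this degeneration problem by a different device, which is the actual key idea of its proof: instead of pulling back the degenerate family to $C$, it replaces $f^*\CV$ by the modified bundle $\widehat\CV=\bigoplus_{i}\CO_C(x_i)$, equipped with a quadratic form $\widehat{q}$ with values in $f^*\CO_{\PP^1}(1)$ that restricts to $f^*q$ on $f^*\CV$ but is \emph{everywhere non-degenerate} on $C$. For this non-degenerate family the Clifford centre cover is \'etale and in fact trivial, the Brauer obstruction vanishes on a curve, and one gets an honest spinor bundle on the relative Grassmannian $\OGr_C(k,\widehat\CV)$; since $\CU_k\boxtimes\CO_C\hookrightarrow\CO_{X_k}\boxtimes\widehat\CV$ is an isotropic subbundle (using $v_{P_i}\notin U_k$ for $U_k\in X_k$), the bundle $\CS_k$ is simply the pullback of that spinor bundle, and the branching-point identification~\eqref{eq:spinor-branching} reduces to the standard fact that the spinor bundle of a smooth even-dimensional quadric restricts to the spinor bundle on a smooth odd-dimensional hyperplane section. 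To salvage your route you would need to carry out the local analysis at the $x_i$ yourself---in effect a modification of $\CB_0$ or of $\CM$ along the ramification---which is precisely what the $\widehat\CV$-trick accomplishes in one stroke.
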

\begin{proof}
One could use the same construction as in Bondal--Orlov: consider a relative flag variety and push forward a spinor line bundle. 
However, for further compatibility we use another construction.
%

We consider the trivial vector bundle $\CV = V \otimes \CO_{\PP^1}$ and the family~$q$ of quadratic forms on $\CV$ with values in $\CO_{\PP^1}(1)$ generated by $q_0$ and $q_\infty$.
Denoting by $x_1,\dots,x_{2g+2} \in C$ the ramification points of the double covering $f \colon C \to \PP^1$ such that $f(x_i) = P_i$, we also consider a vector bundle
\begin{equation*}
\widehat\CV := \bigoplus_{i=1}^{2g+2} \CO_C(x_i)
\end{equation*}
on $C$, and the natural diagonal quadratic form
\begin{equation*}
\widehat{q} \colon \Sym^2(\widehat\CV) \twoheadrightarrow \bigoplus_{i=1}^{2g+2} \CO_C(2x_i) \to f^*\CO_{\PP^1}(1),
\end{equation*}
where the second map is induced by the isomorphisms
$\CO_C(2x_i) \cong f^*\CO_{\PP^1}(P_i) \cong f^*\CO_{\PP^1}(1)$.
It is easy to see that the composition of the quadratic form $\widehat{q}$ with the natural embedding $f^*\CV \to \widehat\CV$
induces the form~$f^*q$ on~$f^*\CV$.
At the same time, the quadratic form $\widehat{q}$ is everywhere non-degenerate. 
Indeed, the map 
\begin{equation*}
\widehat\CV \xrightarrow{\ \widehat{q}\ } \widehat\CV^\vee \otimes f^*\CO_{\PP^1}(1)
\end{equation*}
is the direct sum of the maps $\CO_C(x_i) \to \CO_C(-x_i) \otimes f^*\CO_{\PP^1}(1)$, each of which is an isomorphism.

Consider the sheaf of even parts of Clifford algebras $\Clif_0$ on the curve $C$ associated with the family of quadratic forms $\widehat{q}$.
By~\cite[Proposition~3.13]{KuznetsovQuadrics} there is an \'etale double covering $\tilde{f} \colon \widetilde{C} \to C$ and a sheaf of Azumaya algebras~$\widetilde\Clif_0$ on $\widetilde{C}$
such that $\Clif_0 \cong \tilde{f}_*\widetilde\Clif_0$. 
Let us check that the double covering $\tilde{f}$ is trivial.
Indeed, 
over the complement $C_0 \subset C$ of the ramification divisor of $f$, the family of quadratic forms $\widehat{q}$ coincides with the pullback of the family $q$,
hence over $C_0$ the double covering~$\tilde{f}$ is isomorphic to the pullback of the double covering associated with the family $q$, which is nothing but $f \colon C \to \PP^1$.
In other words, we have an isomorphism
\begin{equation*}
\widetilde{C} \times_C C_0 \cong C_0 \times_{\PP^1} C_0.
\end{equation*}
The right hand side is the union of two connected components, hence $\widetilde{C}$ is the union of two irreducible components, hence $\tilde{f}$ is trivial.

The triviality of $\tilde{f}$ means that the sheaf of algebras $\Clif_0$ is isomorphic to a product of two Azumaya algebras 
swapped by the involution $\tau \colon C \to C$ of the double covering $f$.
Taking into account that the Brauer group of a curve $C$ is trivial (since the base field $\kk$ is algebraically closed of characteristic 0), we obtain an isomorphism
\begin{equation*}
\Clif_0 \cong \CEnd(\widehat\CS) \oplus \CEnd(\tau^*\widehat\CS)
\end{equation*}
for a vector bundle $\widehat\CS$ of rank $2^g$ on $C$, the \emph{spinor bundle} for the family of quadrics $\widehat{q}$.

Next, consider the relative orthogonal Grassmannian $\OGr_C(k,\widehat\CV)$ parameterizing $k$-dimensional subspaces in the fibers of $\widehat\CV$ 
isotropic with respect to the quadratic form $\widehat{q}$.
Let $\widehat\CU_k$ be the tautological bundle on $\OGr_C(k,\widehat\CV)$.
The Clifford multiplication induces a map 
\begin{equation*}
\widehat\CU_k \otimes \tau^*\widehat\CS \to \widehat\CS,
\end{equation*}
whose cokernel by~\cite[Lemma~6.1]{KuznetsovIsotropic} is the spinor bundle $\widehat\CS_k$ on $\OGr_C(k,\widehat\CV)$.
%
%

Finally, let $\CU_k$ be the tautological bundle on $X_k$ and consider on $X_k \times C$
%
the composition of the maps 
\begin{equation*}
\CU_k \boxtimes \CO_C \hookrightarrow V \otimes \CO_{X_k \times C} \cong \CO_{X_k} \boxtimes f^*\CV \hookrightarrow \CO_{X_k} \boxtimes \widehat\CV.
\end{equation*} 
It is an embedding of vector bundles.
Indeed, the map~$f^*\CV \to \widehat\CV$ is monomorphic everywhere except over the points $x_i \in C$, where the kernels are spanned by the vectors~$v_{P_i} \in V$,
and as we have already mentioned several times, these vectors are not contained in the subspaces~$U_k \subset V$ isotropic for all quadrics~$Q_P$ in our pencil.
Moreover, the image of $\CU_k \boxtimes \CO_C$ is isotropic with respect to $\widehat{q}$ over $C_0$, since so is $\CU_k$.
It follows by continuity that it is everywhere isotropic.
Therefore, there is a map $X_k \times C \to \OGr_C(k,\widehat\CV)$ over $C$ such that $\CU_k \boxtimes \CO_C$ is isomorphic to the pullback of $\widehat\CU_k$ 
as a subbundle in $\CO_{X_k} \boxtimes \widehat\CV$.
We denote by~$\CS_k$ the pullback under this morphism of the spinor bundle $\widehat\CS_k$ on $\OGr_C(k,\widehat\CV)$.
It remains to show that isomorphisms~\eqref{eq:spinor-non-branching} and~\eqref{eq:spinor-branching} hold.

When $P$ is not a branching point, the form $\widehat{q}$ over $P$ coincides with the form $q$. Hence, by a base change to points $x_\pm \in f^{-1}(P)$ we obtain the standard embedding 
\begin{equation*}
X_k \stackrel{i_P}\hookrightarrow \OGr_P(k,V) \cong \OGr(k,\widehat\CV_{x_\pm}) 
\end{equation*}
and~\eqref{eq:spinor-non-branching} follows from the compatibility of the construction of spinor bundlles
with base changes.

When $P = P_i$ is a branching point, if $x = x_i$ is the corresponding ramification point,  we have a commutative diagram
\begin{equation*}
\xymatrix{
V \ar@{->>}[r] \ar[d]^q & 
V_P \ar@{^{(}->}[r] \ar[d]^{q'_P} & 
\widehat\CV_{x} \ar[d]^{\widehat{q}} 
\\
V^\vee & 
V_P^\vee \ar@{_{(}->}[l] 
& \widehat\CV^\vee_{x} \ar@{->>}[l]
}
\end{equation*}
that shows that by a base change to $x_i$ we obtain the composition
\begin{equation*}
\xymatrix@1{X_k \ar[r]^-{i_P} &
\OGr_P(k,V) \ar@{-->}[rr]^-{p_2\circ p_1^{-1}} &&
\OGr(k,V_P) \ar[r] & 
\OGr(k,\widehat\CV_x).
}
\end{equation*}
The composition of the first two arrows here is $p_2 \circ p_1^{-1} \circ i_P = p_2 \circ \ti_P$, so it is enough to note that the restriction of the spinor bundle from $\OGr(k,\widehat\CV_x)$ to $\OGr(k,V_P)$ is the spinor bundle.
Indeed, this is one of the standard properties of spinor bundles, since the odd-dimensional quadric $Q_P \subset \PP(V_P)$ is a smooth hyperplane section of the smooth even-dimensional quadtic $\widehat{Q}_x \subset \PP(\widehat\CV_x)$.
This finally proves~\eqref{eq:spinor-branching}.
\end{proof}

\begin{remark}
The family of quadratic forms $\widehat{q}$ on $\widehat\CV$ was also used extensively in~\cite{DR}.
\end{remark}

In the particular case when $k = g - 1$ Desale and Ramanan proved in~\cite{DR} that $X_{g-1}$ is isomorphic 
to the moduli space $M_C(2,L)$ of rank 2 vector bundles on $C$ with the determinant equal to $L$, a fixed line bundle of odd degree. 
We show that the universal bundle for the moduli problem is isomorphic to the spinor bundle $\CS_{g-1}$ constructed above up to a twist.
Note that by Lemma~\ref{lemma:pic-x-c} every line bundle on~$X_{g-1} \times C$ is the tensor product of (pullbacks of) a line bundle on $X_{g-1}$ and a line bundle on $C$.

\begin{proposition}\label{proposition-spinor-universal}
The vector bundle $\CS_{g-1}$ on $X_{g-1}\times C$ is isomorphic up to a twist to the universal rank~$2$ bundle on the product $M_C(2,L) \times C$.
\end{proposition}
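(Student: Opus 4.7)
My plan is to show that the fibers of $\CS_{g-1}$ over points of $X_{g-1}$ coincide with the rank 2 stable bundles attached to those points by Desale and Ramanan, and then invoke the universal property of the moduli space together with Lemma~\ref{lemma:pic-x-c} to identify $\CS_{g-1}$ with the universal bundle up to a twist.

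First, $\CS_{g-1}$ has rank $2^{g-(g-1)} = 2$, matching the rank of the universal bundle. Fix $U \in X_{g-1}$, viewed as a $(g-1)$-dimensional subspace of $V$ isotropic for every quadric in the pencil. By Proposition~\ref{proposition-construction-spinor} (specialized to $k = g-1$), the restriction $\CS_{g-1}|_{\{U\} \times C}$ is the cokernel on $C$ of the Clifford multiplication $U \otimes \tau^*\widehat\CS \to \widehat\CS$ associated with the family of quadratic forms $\widehat{q}$ on $\widehat\CV$. As the remark following Proposition~\ref{proposition-construction-spinor} indicates, this is precisely the family of quadrics used in~\cite{DR}, and the rank 2 bundle attached to $U$ by Desale--Ramanan is produced by the very same Clifford-module recipe. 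Consequently, the classifying morphism $X_{g-1} \to M_C(2,L)$ (after tensoring fibers with a fixed line bundle on $C$ to correct the determinant if necessary) induced by the family $\CS_{g-1}$ coincides with the Desale--Ramanan isomorphism~\cite[Theorem~2]{DR}.

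Since the moduli problem for rank 2 stable bundles of fixed determinant of odd degree is fine, the universal bundle is unique up to tensoring with a line bundle pulled back from $M_C(2,L)$. Hence $\CS_{g-1}$ agrees with the pullback of the universal bundle on $M_C(2,L) \times C$ up to tensoring with a line bundle on $X_{g-1} \times C$. By Lemma~\ref{lemma:pic-x-c}, every such line bundle is an exterior tensor product of a line bundle on $X_{g-1}$ and a line bundle on $C$, which is precisely what the statement calls a twist.

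The principal difficulty is in establishing the fiberwise match with the Desale--Ramanan construction. At non-branching points this is essentially automatic, since there the spinor bundle is tautologically the spinor bundle of the smooth quadric $Q_P$; the delicate case is at the $2g+2$ ramification points, where the fiber of $\CS_{g-1}$ is defined through the $\PP^{g-1}$-fibration $p_2$ on $\TOGr_P(g-1,V)$. Here one needs the standard fact, already invoked in the proof of Proposition~\ref{proposition-construction-spinor}, that the spinor bundle on a smooth odd-dimensional quadric is obtained by restriction from a spinor bundle on a smooth even-dimensional quadric containing it as a hyperplane section. Once this fiberwise identification is in hand, the rest is formal.
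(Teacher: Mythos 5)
Your argument hinges on the assertion that the rank $2$ bundle which Desale--Ramanan attach to a point $U \in X_{g-1}$ ``is produced by the very same Clifford-module recipe'' as the fiber $\CS_{g-1}|_{\{U\}\times C}$, i.e.\ as the cokernel of Clifford multiplication $U\otimes\tau^*\widehat\CS\to\widehat\CS$. That is precisely the content of the proposition, and it is asserted rather than proved: Desale--Ramanan do not define their correspondence through spinor cokernels, so there is no ``same recipe'' to point to, and the closing paragraph of your proposal only re-describes the fibers of $\CS_{g-1}$ (which Proposition~\ref{proposition-construction-spinor} already does) without ever connecting them to the bundles parameterized by $M_C(2,L)$. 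The paper's proof is structured exactly to avoid such a fiberwise matching of constructions: it establishes the global isomorphism $\CS_{g-1}\otimes\tau^*\CS_{g-1}\cong(\CU_{g-1}^\perp/\CU_{g-1})\otimes\det(\CU_{g-1})^\vee$ via the action of the odd part of the Clifford algebra on the half-spinor modules (checked at geometric points by representation theory of $\Spin(2g+2)$), and then invokes the \emph{uniqueness} clause of \cite[Proposition~5.9]{DR}, which says that a rank $2$ bundle $\CE$ with $\CE\otimes\tau^*\CE\cong\CU_{g-1}^\perp/\CU_{g-1}$ up to twist is unique up to twist. Your proposal never uses this uniqueness, which is the actual bridge to the universal bundle.

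There are two further unaddressed points in your universal-property route. To obtain a classifying morphism $X_{g-1}\to M_C(2,L)$ from the family $\CS_{g-1}$ you need the restrictions $\CS_{g-1}|_{\{U\}\times C}$ to be \emph{stable} (and, after a constant twist, to have determinant $L$); stability is nowhere verified and is not automatic for a rank $2$ bundle of odd degree. And even granting a classifying morphism, you must know it is an isomorphism (in effect, that it agrees with the Desale--Ramanan identification) before you may call its target bundle ``the universal bundle'' pulled back to $X_{g-1}\times C$; this again reduces to the unproven fiberwise identification. The appeal to Lemma~\ref{lemma:pic-x-c} to say a twist is an exterior product is fine, but it only handles the final cosmetic step, not the substance of the proposition.
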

\begin{proof}
We use the construction of $\CS_{g-1}$ from the proof of Proposition~\ref{proposition-construction-spinor}.
According to this construction, the bundle $\CS_{g-1}$ is the pullback of the spinor bundle on $\OGr_C(g-1,\widehat\CV)$.
If $\widehat\CU_{g-1}$ and $\widehat\CS_{g-1}$ are the tautological and the spinor bundle on $\OGr_C(g-1,\widehat\CV)$
respectively and $\tau$ is its involution induced by the hyperelliptic involution of $C$, we have an isomorphism
\begin{equation*}
\widehat\CS_{g-1} \otimes \tau^*\widehat\CS_{g-1} \cong (\widehat\CU_{g-1}^\perp / \widehat\CU_{g-1}) \otimes \det(\widehat\CU_{g-1})^\vee
\end{equation*}
(the map is induced by the action of the odd part of the Clifford algebra on the half-spinor modules, 
and the fact that it is an isomorphism can be verified at geometric points, in which case it follows from representation theory of the group $\Spin(2g+2)$).
Pulling it back to $X_{g-1} \times C$, we obtain an isomorphism
\begin{equation*}
\CS_{g-1} \otimes \tau^*\CS_{g-1} \cong (\CU_{g-1}^\perp /\CU_{g-1}) \otimes \det(\CU_{g-1})^\vee.
\end{equation*}

On the other hand, let $\CE$ be the universal bundle on the product $M_C(2,L) \times C$
and consider the pullback of $\CU_{g-1}$ to $M_C(2,L) \times C$ via the isomorphism $X_{g-1} \cong M_C(2,L)$.
By~\cite[Proposition~5.9]{DR} there is an isomorphism up to a twist between $\CU_{g-1}^\perp/\CU_{g-1}$ and the tensor product $\CE \otimes \tau^*(\CE)$,
and, moreover, the rank~2 bundle $\CE$ is unique (up to a twist) with this property.
Combining these two observations, we conclude that the spinor bundle $\CS_{g-1}$ is isomorphic to~$\CE$ up to a twist.
\end{proof}

\begin{remark}\label{remark-spinor-poincare}
   A similar argument shows that under the isomorphism $X_g \cong \Pic^0(C)$ proved in~\cite{MilesReid} and~\cite{DR}
the spinor bundle $\CS_g$ is isomorphic to a twist of the Poincar\'e bundle.
\end{remark}

\subsection{Full faithfulness in the hyperelliptic case}

For each $1 \le k \leq g-1$ consider the Fourier--Mukai functor
defined by the spinor bundle $\CS_k$ constructed in Proposition~\ref{proposition-construction-spinor}
\begin{equation*}
\Phi_k = \Phi_{\CS_k} \colon \BD(C) \to \BD(X_k).
\end{equation*}
When $k = 1$ this functor coincides with the functor of Bondal and Orlov~\cite{BondalOrlov}.
Our main result, fully faithfulness of the functor $\Phi_k$, is thus a generalization of their result~\cite[Theorem~2.7]{BondalOrlov}.

\begin{theorem}\label{theorem:hyperelliptic}
Let $C$ be a hyperelliptic curve of genus $g \ge 2$ and let $k$ be an integer such that~$k \le g-1$.
Then the functor $\Phi_k:\BD(C) \to \BD(X_k)$ is fully faithful.
\end{theorem}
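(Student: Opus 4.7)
My plan is to verify the Bondal--Orlov fully-faithfulness criterion for the spanning class $\{\CO_x\}_{x \in C} \subset \BD(C)$. Since $\Phi_k$ is a Fourier--Mukai functor with locally-free kernel $\CS_k$ on the smooth projective product $X_k \times C$, it has both adjoints, so it is enough to show that $\Ext^i_{X_k}(\Phi_k(\CO_x), \Phi_k(\CO_y)) = 0$ for $x \ne y$ and all $i$, that $\Hom_{X_k}(\Phi_k(\CO_x), \Phi_k(\CO_x)) = \kk$, and that $\Ext^i_{X_k}(\Phi_k(\CO_x), \Phi_k(\CO_x)) = 0$ for $i < 0$ or $i > 1$. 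By Proposition~\ref{proposition-construction-spinor}, for $P = f(x) \in \PP^1$ the object $\Phi_k(\CO_x)$ equals either $\ti_P^* S_{P,k,+}$ or $\ti_P^* S_{P,k,-}$ (one for each preimage of a non-branching $P$), or $\ti_P^* S_{P,k}$ when $P$ is branching. Thus every Ext group of interest has the form $\Ext^*_{X_k}(\ti_P^* S, \ti_Q^* S')$ for suitable spinor-type bundles $S$, $S'$ on $\TOGr_P(k,V)$ and $\TOGr_Q(k,V)$.

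The first step is to reinterpret these as cohomology on the ambient Grassmannian $\Gr(k,V)$. When $P \ne Q$ this uses the Tor-independent Cartesian square $X_k = \TOGr_P(k,V) \times_{\Gr(k,V)} \TOGr_Q(k,V)$ from Lemma~\ref{lemma-tor-independence}; the resulting base-change isomorphism $\ti_{P*}\ti_Q^*(-) = \tj_P^* \tj_{Q*}(-)$, combined with adjunction and the description of the relative dualizing complex of $\tj_P$ (which is a closed embedding for non-branching $P$, and a composition with the birational morphism $p_1$ for branching $P$, per Proposition~\ref{proposition-togr}), rewrites
\begin{equation*}
\RHom_{X_k}(\ti_P^* S, \ti_Q^* S') \cong \RHom_{\Gr(k,V)}(\tj_{P*} S, \tj_{Q*} S')
\end{equation*}
up to an explicit twist and shift. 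When $P = Q$ one instead computes on $\TOGr_P(k,V)$ and substitutes the Koszul resolution of $\ti_{P*}\CO_{X_k}$ by $\Lambda^\bullet(\Sym^2\CU_k)$ (valid by Lemma~\ref{lemma-tor-independence}), again reducing to cohomology of an equivariant complex on $\Gr(k,V)$. Lemma~\ref{lemma-spinor-dual} is used throughout to rewrite the dualized factor as a twisted copy of the same spinor bundle.

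The second step is to substitute the explicit $\GL(V)$-equivariant resolutions of $\tj_{P*} S_{P,k,\pm}$ and $\tj_{P*} S_{P,k}$ provided by Proposition~\ref{lemma-resolution} of Section~\ref{section-bbw}. This produces a finite bicomplex on $\Gr(k,V)$ whose terms are Schur-functor twists of $\CU_k$ and the tautological quotient $V/\CU_k$, after which Borel--Weil--Bott on $\Gr(k,V)$ evaluates every sheaf cohomology. The required vanishings and identifications then amount to case-by-case combinatorial checks splitting according to whether $P$ and $Q$ are branching or not, whether $P = Q$ or not, and the choice of spinor signs: off-diagonal pairs are expected to yield only singular or mutually cancelling BWB weights, while the diagonal pair produces exactly $\kk$ in degree $0$ and a one-dimensional class in degree $1$, matching $\Ext^*_C(\CO_x, \CO_x)$.

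The principal obstacle is Proposition~\ref{lemma-resolution} itself: producing the correct equivariant resolutions of the pushforwards of the spinor bundles. The branching case is the subtle one, because $\TOGr_P(k,V)$ is not homogeneous. There one must first push the spinor bundle along the $\PP^k$-bundle $p_2 \colon \TOGr_P(k,V) \to \OGr(k,V_P)$, then along the birational map $p_1$ (using the geometry of the exceptional divisor from Proposition~\ref{proposition-togr} and the linear equivalence $E = H - H'$), and finally along the embedding $j_P$ of the isotropic Grassmannian into $\Gr(k,V)$. Once this resolution is in place, the Borel--Weil--Bott bookkeeping, while combinatorially dense, is routine.
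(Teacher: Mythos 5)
Your architecture is the paper's: Bondal--Orlov criterion for the skyscrapers, identification $\Phi_k(\CO_x)=\CS_{k,x}$ via Proposition~\ref{proposition-construction-spinor}, Tor-independent base change from Lemma~\ref{lemma-tor-independence} plus projection formula and Lemma~\ref{lemma-spinor-dual} for points in different fibers, Koszul resolution of $X_k$ inside $\TOGr_P(k,V)$ for points in the same fiber, and resolutions of the pushed-forward spinor bundles fed into Borel--Bott--Weil. The different-fiber vanishing is indeed carried out exactly as you describe (Proposition~\ref{proposition-ext-different-p}, via Lemma~\ref{lemma-spinor-subcat} and Lemma~\ref{lemma-vanishing-terms}), and the off-diagonal vanishing there is term-by-term (every relevant weight is singular), not by cancellation.

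However, your plan for the same-fiber case has a genuine gap as written: after the Koszul reduction the objects to compute are $\Ext^\bullet(S,S'\otimes\Lambda^m(\Sym^2\CU_k))$ on $\TOGr_P(k,V)$, and the spinor factors are \emph{not} pulled back from $\Gr(k,V)$, so ``Borel--Weil--Bott on $\Gr(k,V)$ applied to Schur-functor twists of $\CU_k$ and $V/\CU_k$'' cannot evaluate them. If you instead substitute resolutions of both pushforwards on $\Gr(k,V)$, you only get a bicomplex whose hypercohomology converges to the answer, and since on the diagonal the answer is nonzero and must be pinned down exactly ($\Hom=\kk$, $\Ext^1=\kk$), you would then have to control the differentials --- precisely what the paper avoids by working $\Spin(V)$-equivariantly on the orthogonal Grassmannian: it decomposes $S^\vee\otimes S'$ by representation theory of the spin group and applies BBW there (Lemma~\ref{lemma-wt}), treating non-branching and branching $P$ via Lemmas~\ref{lemma-same-p-type-d} and~\ref{lemma-same-p-type-b} (the latter pushing along $p_2$ first). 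Two further calibration points: in the branching case the paper never builds a full resolution of $\tj_{P*}S_P$ --- Proposition~\ref{lemma-resolution} is only for nondegenerate quadrics, and the weaker containment statement of Lemma~\ref{lemma-spinor-subcat} (only diagrams $\nu$ with $|\topd\nu|\le|\btmd\nu|$ occur) already suffices for the vanishing, so your plan to produce an actual resolution there is harder than necessary; and calling the BBW bookkeeping ``routine'' undersells the content, since the weight analysis and the head/tail inequalities of Lemmas~\ref{lemma-cohomology-ualpha}, \ref{lemma-wt} and~\ref{lemma-vanishing-terms} are the mathematical core of the proof.
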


Along this section we fix $k$ and omit it from the notation.
In particular, we abbreviate $\Phi_k$ to $\Phi$ and~$\CS_k$ to $\CS$.
In order to prove that $\Phi$ is fully faithful we use the following criterion by Bondal and Orlov.

\begin{theorem}[\protect{\cite[Theorem~1.1]{BondalOrlov}}]\label{theorem:bondal-orlov}
Let $C$ and $X$ be smooth projective varieties and $\CE \in \BD(C\times X)$.
The Fourier--Mukai functor $\Phi_\CE$ is fully faithful if and only if
\begin{equation*}
\begin{aligned}
(i)\  & \Ext^i(\Phi_\CE(\CO_{x_1}), \Phi_\CE(\CO_{x_2})) = 0, && \text{if $x_1 \ne x_2$},\\
(ii)\ & \Hom(\Phi_\CE(\CO_{x}), \Phi_\CE(\CO_{x})) = \kk, \quad \Ext^i(\Phi_\CE(\CO_{x}), \Phi_\CE(\CO_{x})) = 0 && \text{for $i < 0$ and $i > \dim C$},
\end{aligned}
\end{equation*}
for all points $x,x_1,x_2\in C$, where $\CO_x$ denotes the skyscraper sheaf.
\end{theorem}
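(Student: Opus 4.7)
The forward direction is routine. If $\Phi_\CE$ is fully faithful, then the natural map
\[
\Hom(\CO_{x_1}, \CO_{x_2}[i]) \longrightarrow \Hom(\Phi_\CE \CO_{x_1}, \Phi_\CE \CO_{x_2}[i])
\]
is an isomorphism for every $x_1, x_2 \in C$ and $i \in \ZZ$, so conditions (i) and (ii) follow from the standard facts that $\Ext^*(\CO_{x_1}, \CO_{x_2}) = 0$ for $x_1 \neq x_2$ (disjoint supports) and $\Ext^*(\CO_x, \CO_x) \cong \Lambda^*(T_x C)$, which lives in $[0, \dim C]$ and equals $\kk$ in degree zero.

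For the converse the plan is to invoke Bridgeland's spanning class criterion: an exact functor between bounded derived categories of smooth projective varieties that admits both a left and a right adjoint is fully faithful as soon as it induces isomorphisms $\Hom(A, B[i]) \cong \Hom(FA, FB[i])$ for all $i \in \ZZ$ and all $A, B$ in some spanning class. Every Fourier--Mukai functor $\Phi_\CE$ automatically has both adjoints, represented by kernels obtained from $\CE^\vee$ by twisting with the canonical bundles of $C$ and $X$ and shifting appropriately. Moreover, the family of skyscrapers $\Omega = \{\CO_x\}_{x \in C}$ is a spanning class of $\BD(C)$, since a nonzero complex on a smooth variety must have a cohomology sheaf with a nonzero stalk at some closed point, yielding a nonzero $\Hom$ to (or from) some $\CO_x[i]$.

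Everything then reduces to verifying the comparison map on $\Omega$. For $x_1 \neq x_2$ both sides vanish (by disjoint support and by hypothesis (i)), so the map is trivially an iso. For $x_1 = x_2 = x$, hypothesis (ii) settles $i = 0$ (both sides equal $\kk$ and the identity maps to the identity) and the range $i \notin [0, \dim C]$ (both sides vanish).

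The main obstacle is the intermediate range $0 < i \le \dim C$, where I must show that the natural map
\[
\Lambda^i T_x C \;\cong\; \Ext^i(\CO_x, \CO_x) \longrightarrow \Ext^i(\Phi_\CE \CO_x, \Phi_\CE \CO_x)
\]
is an isomorphism. My approach is to exploit two pieces of structure simultaneously. First, this map is a morphism of graded $\kk$-algebras under the Yoneda product, and the source $\Lambda^* T_x C$ is a Poincar\'e duality algebra generated in degree one. Second, Serre duality on the target gives a perfect pairing
\[
\Ext^i(\Phi_\CE \CO_x, \Phi_\CE \CO_x) \;\otimes\; \Ext^{\dim X - i}(\Phi_\CE \CO_x, \Phi_\CE \CO_x \otimes \omega_X) \;\longrightarrow\; \kk .
\]
Feeding the vanishing in (ii) into this duality, using compatibility of Serre functors under the adjunction between $\Phi_\CE$ and its adjoints (which intertwines the Serre functors of $C$ and $X$), one pins down the dimensions of the intermediate $\Ext$-groups on the target and obtains surjectivity of the comparison map in each degree; the Poincar\'e duality of the source algebra then forces injectivity, yielding an isomorphism in every degree and completing the verification of the spanning class condition.
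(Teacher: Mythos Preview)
The paper does not prove this theorem at all: it is quoted verbatim from \cite[Theorem~1.1]{BondalOrlov} and used as a black box, so there is no ``paper's own proof'' to compare your proposal against.

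That said, let me comment on the proposal itself. The overall architecture---adjoints of Fourier--Mukai functors, skyscrapers as a spanning class, the comparison map being a graded ring homomorphism with source $\Lambda^\bullet T_xC$---is the standard one and is correct. The forward direction and the cases $x_1\ne x_2$, $i=0$, $i\notin[0,\dim C]$ are fine.

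The final paragraph, however, is not a proof. Condition~(ii) together with Serre duality on $X$ does \emph{not} determine the dimensions of $\Ext^i(\Phi_\CE\CO_x,\Phi_\CE\CO_x)$ for $0<i\le\dim C$: Serre duality on $X$ pairs these groups with $\Ext^{\dim X-i}(\Phi_\CE\CO_x,\Phi_\CE\CO_x\otimes\omega_X)$, about which the hypotheses say nothing, and the Serre--functor compatibility $S_C\Phi^!\cong\Phi^! S_X$ does not by itself convert this into information about the groups you want. So the claimed ``surjectivity in each degree'' is unsupported.

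The argument that actually closes this gap (Bondal--Orlov, or Bridgeland, or Huybrechts, \emph{Fourier--Mukai transforms}, Prop.~7.1) bypasses the intermediate $\Ext$-groups entirely. One shows instead that the unit $\CO_x\to\Phi^!\Phi_\CE\CO_x$ is an isomorphism: condition~(i) forces $\Phi^!\Phi_\CE\CO_x$ to be supported at $x$, hence a complex with cohomology built from copies of $\CO_x$; the vanishing in~(ii) combined with adjunction and Serre duality on $C$ bounds the amplitude of this complex; and the $\Hom$ condition in~(ii) then pins it down as a single $\CO_x$ in degree~$0$. Since the unit is an isomorphism on every skyscraper and skyscrapers span, $\Phi_\CE$ is fully faithful. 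If you want to keep your algebra-homomorphism framing, the correct step is to first prove the map is \emph{injective} in top degree $\dim C$ (via the adjunction/Serre trick), deduce injectivity in all degrees from Poincar\'e duality of the source, and then pass through the adjoint to conclude---not the other way around.
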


We are going to check that the conditions of the criterion are satisfied for $\CE = \CS$.
To be more precise, we reduce the necessary verifications to some cohomology computations that are performed in the next section
(see Corollary~\ref{corollary-vanishing}, Lemma~\ref{lemma-same-p-type-d} and Lemma~\ref{lemma-same-p-type-b}).
In Proposition~\ref{proposition-ext-different-p} we discuss the case when~$f(x_1) \ne f(x_2)$, and in Proposition~\ref{proposition-ext-same-p} the case when $f(x_1) = f(x_2)$.
In both cases we check that the sufficient conditions of Theorem~\ref{theorem:bondal-orlov} hold true, so Theorem~\ref{theorem:hyperelliptic} follows.


First, we compute the $\Ext$ groups for points $x_1$, $x_2$ in different fibers of $C$ over $\PP^1$.

\begin{proposition}\label{proposition-ext-different-p}
Let $x_1,x_2 \in C$ and assume that $f(x_1) \ne f(x_2)$.
Then $\Ext^\bullet(\CS_{x_1},\CS_{x_2}) = 0$.
\end{proposition}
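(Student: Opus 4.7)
Set $P_j = f(x_j)$ for $j = 1, 2$, so by hypothesis $P_1 \neq P_2$, and let $S_j$ denote the spinor bundle on $\TOGr_{P_j}(k, V)$ whose pullback $\ti_{P_j}^* S_j$ is $\CS_{x_j}$; according to Proposition~\ref{proposition-construction-spinor}, $S_j$ is one of the bundles $S_{P_j, k, \pm}$ or $S_{P_j, k}$ depending on whether $P_j$ is non-branching or branching. The plan is to push the entire $\Ext$ computation out to the ambient Grassmannian $\Gr(k, V)$, where it will be controlled by Borel--Bott--Weil.

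The first step is to invoke Lemma~\ref{lemma-tor-independence}, which supplies the Tor-independent Cartesian square
\[
\xymatrix@R=1.2em{
X_k \ar[r]^-{\ti_{P_1}} \ar[d]_-{\ti_{P_2}} & \TOGr_{P_1}(k, V) \ar[d]^-{\tj_{P_1}} \\
\TOGr_{P_2}(k, V) \ar[r]_-{\tj_{P_2}} & \Gr(k, V).
}
\]
Combining the base change formula $\ti_{P_1*}\,\ti_{P_2}^* \cong \tj_{P_1}^*\,\tj_{P_2*}$, the adjunction $(\ti_{P_1}^*, \ti_{P_1*})$, and the projection formula for the proper morphism $\tj_{P_1}$, I expect to obtain a clean identification
\[
\Ext^\bullet_{X_k}(\ti_{P_1}^* S_1,\, \ti_{P_2}^* S_2) \;\cong\; \RGamma\bigl(\Gr(k, V),\; \tj_{P_1*}(S_1^\vee) \otimes \tj_{P_2*}(S_2)\bigr).
\]
Next, I would apply Lemma~\ref{lemma-spinor-dual} to rewrite $\tj_{P_1*}(S_1^\vee)$ as a twist $\tj_{P_1*}(S_1')(-1)$ of a pushforward of a spinor bundle, so that both tensor factors are of the same shape, up to a $\CO(-1)$.

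At this point the resolutions of $\tj_{P*}(S_{P, k, \pm})$ and $\tj_{P*}(S_{P, k})$ on $\Gr(k, V)$ furnished by Proposition~\ref{lemma-resolution} enter: they present each factor as a finite complex built from Schur functors of the tautological subbundle $\CU_k$ on $\Gr(k, V)$. Tensoring the two resolutions, the Ext groups become the hypercohomology on $\Gr(k, V)$ of an explicit $\GL(V)$-equivariant complex, and Borel--Bott--Weil converts the whole problem into a combinatorial question about weights.

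The main obstacle is this final combinatorial step: one must verify that every weight appearing in the tensor product of the two resolutions is Bott-singular, so that every term contributes zero cohomology. This vanishing is precisely what Corollary~\ref{corollary-vanishing} is designed to supply, and invoking it concludes $\Ext^\bullet(\CS_{x_1}, \CS_{x_2}) = 0$. Note that the mechanism is uniform in the branching/non-branching dichotomy and in the sign choice for non-branching points, since both cases are absorbed by Lemma~\ref{lemma-spinor-dual} and Proposition~\ref{lemma-resolution}; hence the argument applies simultaneously to every pair $(x_1, x_2)$ with $f(x_1) \neq f(x_2)$.
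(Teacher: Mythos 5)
Your proposal is correct and follows essentially the same route as the paper: the Tor-independent base change across the square from Lemma~\ref{lemma-tor-independence}, adjunction and the projection formula to land in $H^\bullet(\Gr(k,V),\tj_{P_1*}(S_1^\vee)\otimes\tj_{P_2*}(S_2))$, Lemma~\ref{lemma-spinor-dual} to trade the dual for a twist by $\CO(-1)$, and then Corollary~\ref{corollary-vanishing} for the vanishing. The only cosmetic imprecision is your aside that Proposition~\ref{lemma-resolution} resolves both $\tj_{P*}(S_{P,k,\pm})$ and $\tj_{P*}(S_{P,k})$ — the resolution is only established for non-branching $P$, with branching $P$ handled by Lemma~\ref{lemma-spinor-subcat} inside the proof of Corollary~\ref{corollary-vanishing} — but since you invoke that corollary directly, this does not affect the argument.
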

\begin{proof}
Consider the diagram
(we abbreviate $\ti_{P_1}$, $\ti_{P_2}$, $\tj_{P_1}$, and $\tj_{P_2}$ to $\ti_1$, $\ti_2$, $\tj_1$, and $\tj_2$ respectively.)
\begin{equation*}
\xymatrix{
X \ar[r]^-{\ti_1} \ar[d]_{\ti_2} & \TOGr_{P_1}(k,V) \ar[d]^{\tj_1} \\
\TOGr_{P_2}(k,V) \ar[r]^-{\tj_2} & \Gr(k,V)
}
\end{equation*}
Recall that $\CS_{x_1} = \ti_{1}^*S_{P_1}$ and $\CS_{x_2} = \ti_{2}^*S_{P_2}$ by Proposition~\ref{proposition-construction-spinor}, where 
$P_i = f(x_i)$, and for each of the two points $P \in \{P_1,P_2\}$ if $P$ is not a branching point of $f$,
then $S_P$ is one of the two spinor bundles on~$\TOGr_P(k,V) = \OGr_P(k,V)$, while if $P$ is a branching point,
then $S_P$ is the pullback via $p_2$ of the spinor bundle on $\OGr(k,V_P)$.
Then by adjunction we have
\begin{equation*}
\Ext^\bullet(\CS_{x_1},\CS_{x_2}) =
\Ext^\bullet(\ti_{1}^* S_{P_1}, \ti_{2}^* S_{P_2}) =
\Ext^\bullet(S_{P_1}, \ti_{1*}\ti_{2}^* S_{P_2}) =
H^\bullet(\TOGr_{P_1}(k,V),S_{P_1}^\vee \otimes \ti_{1*}\ti_{2}^* S_{P_2}).
\end{equation*}
The diagram is Cartesian and $\Tor$-independent by Lemma~\ref{lemma-tor-independence}, so we have a base change isomorphism \cite[Theorem~3.10.3]{Lipman}
\begin{equation*}
\ti_{1*} \ti_2^* S_{P_2} \cong \tj_1^* \tj_{2*} S_{P_2}.
\end{equation*}
We can now  rewrite
\begin{multline*}
H^\bullet(\TOGr_{P_1}(k,V),S_{P_1}^\vee \otimes \ti_{1*}\ti_{2}^* S_{P_2}) =
H^\bullet(\TOGr_{P_1}(k,V),S_{P_1}^\vee \otimes \tj_1^* \tj_{2*} S_{P_2}) = \\
H^\bullet(\Gr(k,V),\tj_{1*}(S_{P_1}^\vee \otimes \tj_1^* \tj_{2*} S_{P_2})) = 
H^\bullet(\Gr(k,V),\tj_{1*}(S_{P_1}^\vee) \otimes \tj_{2*}(S_{P_2})).
\end{multline*}
By Lemma~\ref{lemma-spinor-dual}, we have $\tj_{1*}(S_{P_1}^\vee) \cong \tj_{1*}(S_{P_1}')(-1)$, where $S'_{P_1}$ is one of the spinor bundles corresponding to the same point $P_1$,
so the right hand side can be rewritten as $H^\bullet(\Gr(k,V),\tj_{1*}(S'_{P_1}) \otimes \tj_{2*}(S_{P_2}) \otimes \CO(-1))$.
By Corollary~\ref{corollary-vanishing} the latter is zero, which finishes the proof.
\end{proof}

Now consider the case when the points are in the same fiber of $C$ over $\PP^1$,
in particular, they might coincide.

\begin{proposition}\label{proposition-ext-same-p}
Let $x_1,x_2 \in C$ and assume that $f(x_1) = f(x_2)$. Then $\Ext^\bullet(\CS_{x_1},\CS_{x_2}) = 0$ in case $x_1 \ne x_2$,
and $\Ext^\bullet(\CS_{x_1},\CS_{x_2}) = \kk \oplus \kk[-1]$ in case $x_1 = x_2$.
\end{proposition}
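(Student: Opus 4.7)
The plan is to mimic the strategy of Proposition~\ref{proposition-ext-different-p}, but since $x_1$ and $x_2$ now lie in the same fiber of $f$, I would work with a single embedding $\ti_P: X_k \to \TOGr_P(k,V)$ rather than two different ones. Writing $\CS_{x_i} = \ti_P^* S_P^{(i)}$, where each $S_P^{(i)}$ is the appropriate spinor bundle on $\TOGr_P(k,V)$ (one of the two parities when $P$ is non-branching, or the pullback of the spinor bundle from $\OGr(k,V_P)$ when $P$ is branching), I would combine adjunction, the projection formula, and the Koszul resolution of $\ti_{P*}\CO_{X_k}$ arising from Lemma~\ref{lemma-tor-independence} (which identifies $X_k$ with the zero locus of a regular section of $\Sym^2\CU_k^\vee$ on $\TOGr_P(k,V)$) to obtain
\begin{equation*}
\RHom_{X_k}(\CS_{x_1},\CS_{x_2}) \cong \RHom_{\TOGr_P(k,V)}\bigl(S_P^{(1)},\, S_P^{(2)} \otimes \Lambda^\bullet(\Sym^2\CU_k^\vee)^\vee\bigr).
\end{equation*}
The right-hand side is then computed by a spectral sequence whose first page consists of the cohomology groups $H^q\bigl(\TOGr_P(k,V),\, S_P^{(1)\vee} \otimes S_P^{(2)} \otimes \Lambda^p(\Sym^2\CU_k^\vee)^\vee\bigr)$.

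Next, I would split into three sub-cases according to the fiber structure. When $P$ is non-branching and $x_1 \ne x_2$, the bundles $S_P^{(1)}$ and $S_P^{(2)}$ are spinor bundles of opposite parity on the type-D Grassmannian $\OGr_P(k,V)$, and every term in the Koszul spectral sequence should vanish thanks to Lemma~\ref{lemma-same-p-type-d}, yielding the claimed $\Ext^\bullet = 0$. When $P$ is non-branching and $x_1 = x_2$, the two bundles coincide and Lemma~\ref{lemma-same-p-type-d} should collect the precise non-vanishing contributions into $\kk$ in degree $0$ (which must contain the identity endomorphism) and $\kk$ in degree $1$. Finally, when $P$ is a branching point, $x_1 = x_2$ is necessarily the unique ramification point, the analogous computation takes place on the $\PP^k$-bundle $\TOGr_P(k,V)$ over the type-B Grassmannian $\OGr(k,V_P)$, and Lemma~\ref{lemma-same-p-type-b} plays the role of its type-D analog to again yield $\kk \oplus \kk[-1]$.

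The main obstacle will be the cohomological computations packaged by Lemmas~\ref{lemma-same-p-type-d} and~\ref{lemma-same-p-type-b}. Each exterior power $\Lambda^j(\Sym^2\CU_k^\vee)^\vee$ must be decomposed into irreducible equivariant bundles on the Grassmannian, tensored with the equivariant sheaf $S_P^{(1)\vee} \otimes S_P^{(2)}$, and each summand pushed through Borel--Weil--Bott. Delicate cancellations will be required for the contributions to collapse precisely to $\kk \oplus \kk[-1]$ in the self-Ext cases and to $0$ in the opposite-parity case; the explicit resolutions of pushforwards of spinor bundles from isotropic Grassmannians to ordinary Grassmannians (Proposition~\ref{lemma-resolution}) should be the key technical device that makes these computations tractable.
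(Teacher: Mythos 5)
Your proposal follows essentially the same route as the paper: adjunction along $\ti_P$, the Koszul resolution of $\ti_{P*}\CO_{X_k}$ coming from Lemma~\ref{lemma-tor-independence}, the case split (opposite spinors for $x_1\ne x_2$, self-Ext at a non-branching point, the $p_2$-pushforward to $\OGr(k,V_P)$ at a branching point), and then Lemmas~\ref{lemma-same-p-type-d} and~\ref{lemma-same-p-type-b} together with the Koszul spectral sequence, which is exactly the paper's argument. The only inaccuracy is peripheral: those two lemmas are proved by Borel--Bott--Weil via Lemma~\ref{lemma-wt} and the decomposition of $\Lambda^m(\Sym^2\CU_k)$, not via the resolutions of Proposition~\ref{lemma-resolution} (which are needed only for the different-fiber case), and no delicate cancellations occur --- the surviving terms are isolated, so the spectral sequence degenerates immediately.
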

\begin{proof}
Let $P = f(x_1) = f(x_2)$. 
Then we can assume that one of the following possibilities hold:
\begin{enumerate}
\item 
$\CS_{x_1} = \ti_P^*S_{P,+}$ and $\CS_{x_2} = \ti_P^*S_{P,-}$ if $x_1 \ne x_2$;
\item 
$\CS_{x_1} = \CS_{x_2} = \ti_P^*S_{P,+}$ if $x_1 = x_2$ and $P$ is not branching;
\item 
$\CS_{x_1} = \CS_{x_2} = \ti_P^*S_P$ if $x_1 = x_2$ and $P$ is branching.
\end{enumerate}
First, assume (1) holds.
Then
\begin{equation*}
\Ext^\bullet(\CS_{x_1},\CS_{x_2}) = 
\Ext^\bullet(\ti_P^*S_{P,+},\ti_P^*S_{P,-}) = 
\Ext^\bullet(S_{P,+},\ti_{P*}\ti_P^*S_{P,-}) = 
\Ext^\bullet(S_{P,+},S_{P,-} \otimes \ti_{P*}\CO_{X_k}).
\end{equation*}
Since the scheme $X_k$ is the zero locus of a regular section of $\Sym^2\CU_k^\vee$ on $\OGr_P(k,V)$ (see Lemma~\ref{lemma-tor-independence}), 
its structure sheaf has a Koszul resolution with terms of the form $\Lambda^m(\Sym^2\CU_k)$. 
So, it is enough to show that
$\Ext^\bullet(S_{P,+},S_{P,-} \otimes \Lambda^m(\Sym^2\CU_k)) = 0$
for all $m \ge 0$. This is the statement of Lemma~\ref{lemma-same-p-type-d}~(1).

Next, assume (2) holds.
Then the same argument reduces the claim of the proposition to the computation of~$\Ext^\bullet(S_{P,+},S_{P,+} \otimes \Lambda^m(\Sym^2\CU_k))$,
and Lemma~\ref{lemma-same-p-type-d}~(2) shows that the only nontrivial terms are
\begin{equation*}
\Hom(S_{P,+},S_{P,+})=\kk\quad\text{and}\quad \Ext^2(S_{P,+},S_{P,+}\otimes \Lambda^1(\Sym^2\CU_k)) = \kk,
\end{equation*}
and a spectral sequence argument shows that $\Ext^\bullet(\CS_{x_1},\CS_{x_2}) = \kk \oplus \kk[-1]$.

Finally, assume (3) holds.
Then we need to compute on $\TOGr_P(k,V)$ the groups
\begin{multline*}
\Ext^\bullet(S_{P},S_{P} \otimes \Lambda^m(\Sym^2\CU_k)) =
\Ext^\bullet(p_2^*S,p_2^*S \otimes \Lambda^m(\Sym^2\CU_k)) \\ \cong
\Ext^\bullet(S,p_{2*}(p_2^*S \otimes \Lambda^m(\Sym^2\CU_k))) \cong
\Ext^\bullet(S,S \otimes p_{2*}(\Lambda^m(\Sym^2\CU_k))),
\end{multline*}
and Lemma~\ref{lemma-same-p-type-b} in conjunction with a typical spectral sequence argument finish the proof.
\end{proof}

\begin{remark}
Theorem~\ref{theorem:hyperelliptic} has the following corollary.
For each point $x \in C$ the bundle $\CS_x \cong \Phi_k(\CO_x)$ on $X_k$ is simple, 
and the map $\Ext^1(\CO_x,\CO_x) \to \Ext^1(\CS_x,\CS_x)$ induced by the functor $\Phi_k$ is an isomorphism.
Therefore $\Phi_k$ induces an \'etale map $x \mapsto \CS_x$ from the curve $C$ to the moduli stack of simple objects in the derived category $\BD(X_k)$.
Since also $\Hom(\CS_{x_1},\CS_{x_2}) = 0$ for $x_1 \ne x_2$, this map is bijective.
If one could check that all the bundles $\CS_x$ are stable, this would identify the curve $C$ with a component of the moduli space of stable bundles on $X_k$.
This is, indeed, can be checked in case $k = g - 1$ of rank 2 bundles, but for smaller $k$ (and higher ranks) this becomes hard.
\end{remark}

\subsection{From hyperelliptic to generic curves}

In this section we show that full faithfulness for hyperelliptic curves implies the same for generic curves.
It follows from a more general result.

Assume $B$ is a smooth base scheme, $X \to B$ and $Y \to B$ are schemes over $B$ that we assume for simplicity to be smooth and proper over $B$,
and $\CE \in \BD(X\times_B Y)$ is an object. For each point $b \in B$ let $\CE_b$ be the (derived) pullback 
of $\CE$ to $X_b\times Y_b \subset X\times_B Y$, and consider the corresponding Fourier--Mukai functor 
$\Phi_{\CE_b}:\BD(Y_b) \to \BD(X_b)$. Define the {\sf locus of full faithfulness} of $\CE$ to be the subset
\begin{equation*}
\FFF(\CE) := \{ b \in B\ |\ \text{$\Phi_{\CE_b}$ is fully faithful} \} \subset B.
\end{equation*}

\begin{proposition}\label{proposition-fff-open}
If $p:X \to B$ and $q:Y \to B$ are smooth projective over a smooth $B$, then the locus of full faithfulness $\FFF(\CE) \subset B$ is open.
\end{proposition}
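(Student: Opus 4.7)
The plan is to rephrase full faithfulness as the vanishing of a single bounded coherent complex on the relative self-product $Y \times_B Y$, and then to use properness of its projection to $B$ to conclude openness.

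First, I would set up the relative Fourier--Mukai formalism. Define the right adjoint kernel
\[
\CE^! := \CE^\vee \otimes p^*\omega_{X/B}[\dim(X/B)] \in \BD(X \times_B Y),
\]
where $p$ denotes the projection $X \times_B Y \to X$; since $X \to B$ is smooth and proper, relative Grothendieck duality shows that $\Phi_{\CE^!}\colon \BD(X) \to \BD(Y)$ is right adjoint to $\Phi_\CE$ over $B$. Consider the convolution kernel
\[
\CK := \pi_{13*}\bigl(\pi_{12}^*\CE \otimes \pi_{23}^*\CE^!\bigr) \in \BD(Y \times_B Y),
\]
where the $\pi_{ij}$ are the three projections from $Y \times_B X \times_B Y$. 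Standard Fourier--Mukai calculus gives $\Phi_\CK \simeq \Phi_{\CE^!} \circ \Phi_\CE$, and the unit of the adjunction furnishes a canonical morphism $\eta\colon \CO_{\Delta_{Y/B}} \to \CK$. Since a functor with a right adjoint is fully faithful if and only if the unit is an isomorphism, $\Phi_{\CE_b}$ is fully faithful if and only if $\eta|_{Y_b \times Y_b}$ is an isomorphism.

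Next, I would invoke base change. Smoothness of $X$ and $Y$ over the smooth base $B$ makes all the relevant fibre squares Tor-independent. Flat base change for $\pi_{13}$, together with the compatibility of Grothendieck duality with base change for the smooth proper morphism $p$, yields $\CK|_{Y_b \times Y_b} \cong \CK_b$, the analogous convolution formed from $\CE_b$ and $\CE_b^!$, and shows that $\eta|_{Y_b \times Y_b}$ is precisely the unit for the fibrewise adjunction $\Phi_{\CE_b}^! \dashv \Phi_{\CE_b}$. Hence $b \in \FFF(\CE)$ if and only if $\Cone(\eta)|_{Y_b \times Y_b} = 0$.

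Finally, set $\CC := \Cone(\eta) \in \BD(Y \times_B Y)$; it is bounded with coherent cohomology, so its support $Z := \bigcup_i \supp(\CH^i(\CC))$ is closed in $Y \times_B Y$. By Nakayama's lemma applied to each cohomology sheaf, $\CC|_{Y_b \times Y_b} = 0$ if and only if $Y_b \times Y_b$ is disjoint from $Z$. Properness of the projection $\rho\colon Y \times_B Y \to B$ then guarantees that $\rho(Z) \subset B$ is closed, and combining with the previous step gives
\[
\FFF(\CE) = B \setminus \rho(Z),
\]
which is open. The main subtlety is the base change compatibility of the convolution and of the unit with restriction to fibres; this is routine in the smooth proper setting, where flat base change and Grothendieck duality both behave well, so the argument goes through cleanly.
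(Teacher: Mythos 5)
Your proposal follows exactly the paper's route: form the adjoint kernel, convolve it with $\CE$, compare the result with the diagonal via the (co)unit, take the cone, and use closedness of its support together with properness of $Y\times_B Y\to B$; your extra remarks on Tor-independent base change just make explicit what the paper asserts in passing. The one slip is in the adjoint formula: since $\Phi_{\CE_b}\colon\BD(Y_b)\to\BD(X_b)$ has source $Y_b$, its \emph{right} adjoint is the Fourier--Mukai functor with kernel $\CE^\vee\otimes q^*\omega_{Y/B}[\dim Y/B]$ (relative canonical of the source $Y$, as in the paper), whereas your $\CE^!=\CE^\vee\otimes p^*\omega_{X/B}[\dim X/B]$ is the kernel of the \emph{left} adjoint. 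Consequently the canonical morphism attached to your convolution $\CK$ is the counit $\CK\to\Delta_*\CO_Y$, not a unit $\Delta_*\CO_Y\to\CK$ as you claim; with your kernel that map does not exist canonically in the direction you wrote. This is easily repaired in either of two ways: replace $\omega_{X/B}$ by $\omega_{Y/B}$ and keep the unit, or keep your kernel and test full faithfulness on the counit (a right adjoint in an adjunction is fully faithful if and only if the counit is an isomorphism), taking the cone of that morphism instead. After this correction the remainder of your argument --- fibrewise identification of the (co)unit by base change, vanishing of the derived restriction of the cone detected by its support via Nakayama, and openness of the complement of the image of that support under the proper map $Y\times_B Y\to B$ --- is precisely the paper's proof.
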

\begin{proof}
Consider the object $\CE^! := \CE^\vee \otimes \omega_{Y/B}[\dim Y/B] \in \BD(X\times_B Y)$.
Then for each $b \in B$ the functor $\Phi_{\CE^!_b}:\BD(X_b) \to \BD(Y_b)$ is right adjoint to $\Phi_{\CE_b}$.
Further, let $\CE^!\circ \CE \in \BD(Y\times_B Y)$ be the convolution of the two kernels. 
Then for each $b \in B$ the functor $\Phi_{(\CE^!\circ\CE)_b}:\BD(Y_b) \to \BD(Y_b)$ is isomorphic 
to the composition of functors $\Phi_{\CE^!_b} \circ \Phi_{\CE_b}$.
Finally, there is a canonical morphism 
\begin{equation*}
\Delta_*\CO_Y \to \CE^! \circ \CE
\end{equation*}
such that for each $b \in B$ the induced morphism $\id_{Y_b} = \Phi_{\Delta_*\CO_{Y_b}} \to \Phi_{(\CE^!\circ\CE)_b}$ 
is the unit of the adjunction. 

It is well known that a functor is fully faithful if and only if the unit of its adjunction is an isomorphism.
It follows that $\FFF(\CE)$ is the complement of the projection to $B$ of the support of the cone of the morphism $\Delta_*\CO_Y \to \CE^! \circ \CE$. 
The support of an object of $\BD(Y\times_B Y)$ is closed in $Y \times_B Y$, hence proper over $B$, hence its image in $B$ is closed. 
Thus, its complement $\FFF(\CE)$ is open in $B$.
\end{proof}

We apply this proposition to prove Theorem~\ref{theorem-main}.

\begin{proof}[Proof of Theorem~\textup{\ref{theorem-main}}]
Consider a category $\FB_{g,n}^{r,d}$ fibered in groupoids over the category of $\kk$-schemes defined by associating to a scheme $S$ the category of collections
$\FB_{g,n}^{r,d}(S) = \{(\CC,x_1,\dots,x_n,\CE)\}$,
where 
\begin{itemize}
\item 
$\CC \to S$ is a smooth projective curve of genus $g$,
\item 
$x_1,\dots,x_n \colon S \to \CC$ are non-intersecting sections of $\CC \to S$,
\item 
$\CE$ is a vector bundle of rank $r$ on $\CC$ such that for any geometric point $s \in S$
the restriction $\CE_s$ of~$\CE$ to the fiber $\CC_s$ of $\CC$ over the point $s$ is stable of degree~$d$.
\end{itemize}
A morphism from $(\CC,x_1,\dots,x_n,\CE)$ over~$S$ to $(\CC',x'_1,\dots,x'_n,\CE')$ over $S'$ is 
\begin{itemize}
\item 
a morphism $\varphi \colon S \to S'$,
\item  
an isomorphism $\varphi_\CC \colon \CC \xrightarrow{\ \sim\ } \CC' \times_{S'} S$ such that $x_i = x'_i \times_{S'} S$ for all $1 \le i \le n$,
\item 
a line bundle $\CL \in \Pic^0(\CC/S)$, and
\item 
an isomorphism $\varphi_\CE \colon \CE \cong \varphi_\CC^*(\CE') \otimes \CL$.
\end{itemize}
It is easy to see that $\FB_{g,n}^{r,d}$ is a stack over the category of schemes in fppf topology (the stack of $n$-pointed genus $g$ curves with a stable vector bundle of rank $r$ and degree $d$).
Below we only need the cases $n \le 1$ and $r \le 2$, and we will assume that $d$ is odd.
Of course, the stability condition is only relevant when~$r > 1$.

Consider the following Cartesian diagram of morphisms of stacks
\begin{equation*}
\xymatrix{
& \FB_{g,1}^{2,d} \ar[dr]^{\det} \ar[dl]_{\mathrm{forget}}
\\
\FB_{g,0}^{2,d} \ar[dr]_{\det} 
&& 
\FB_{g,1}^{1,d} \ar[dl]^{\mathrm{forget}}
\\
& \FB_{g,0}^{1,d} \ar[d]^\pi
\\
& \CM_g
}
\end{equation*}
where the south-east arrows take $(\CC,x_1,\dots,x_n,\CE)$ to $(\CC,x_1,\dots,x_n,\det(\CE))$
(for $n = 0$ and $n = 1$),
the south-west arrows forget the point~$x_1$, $\CM_g$ is the stack of genus $g$ curves, and the vertical arrow $\pi$ forgets the line bundle.

Assume $d$ is odd. 
Consider the universal rank~2 vector bundle $\CE$ on the stack $\FB_{g,1}^{2,d}$ 
(here we view the stack~$\FB_{g,1}^{2,d}$ as the universal curve over the stack~$\FB_{g,0}^{2,d}$),
and the Fourier--Mukai functor 
\begin{equation*}
\Phi_\CE \colon \BD(\FB_{g,0}^{2,d}) \to \BD(\FB_{g,1}^{1,d}).
\end{equation*}
Clearly, over a point $(C,L)$ of $\FB_{g,0}^{1,d}$ the top part of the diagram becomes
\begin{equation*}
\xymatrix{
& M_C(2,L) \times C \ar[dl] \ar[dr] \\
M_C(2,L) && C
}
\end{equation*}
and $\CE_{C,L}$ becomes the universal vector bundle on $M_C(2,L) \times C$. 
Therefore, by Theorem~\ref{theorem:hyperelliptic} the functor~$\Phi_{\CE_{C,L}}$ is fully faithful as soon as the curve $C$ is hyperelliptic.
Thus, the open (by Proposition~\ref{proposition-fff-open}) substack $\FFF(\CE) \subset \FB_{g,0}^{1,d}$ contains the preimage under $\pi$ of the hyperelliptic locus $\CM_g^{\mathrm{he}} \subset \CM_g$.
Let us check that $\FFF(\CE)$ is the preimage under $\pi$ of an open substack of $\CM_g$.

Since the morphism $\pi$ is proper, it is enough to show that if $L_1$ and~$L_2$ are two different line bundles of degree $d$ on a curve~$C$, the functor $\Phi_{\CE_{C,L_1}}$ is fully faithful if and only if the functor~$\Phi_{\CE_{C,L_2}}$ is.
Indeed, the moduli spaces $M_C(2,L_1)$ and $M_C(2,L_2)$ are isomorphic via the map 
\begin{equation*}
E \mapsto E \otimes (L_1^{-1} \otimes L_2)^{1/2}
\end{equation*}
determined by a choice of a square root of the degree zero line bundle $L_1^{-1} \otimes L_2$ on $C$.
This isomorphism identifies the universal vector bundles $\CE_{C,L_1}$ and $\CE_{C,L_2}$ (up to a twist), 
hence identifies the functors $\Phi_{\CE_{C,L_1}}$ and $\Phi_{\CE_{C,L_2}}$ (up to a composition with a twist on $M_C(2,L)$ and a twist on $C$).
Since a line bundle twist is an autoequivalence, the first is an autoequivalence if and only if the second is.

Thus, we checked that there is an open substack in the moduli stack $\CM_g$ containing the hyperelliptic locus $\CM_g^{\mathrm{he}}$ whose preimage in $\FB_{g,0}^{1,d}$ equals $\FFF(\CE)$.
This means that for any curve from this open substack the functor $\Phi_{\CE_{C,L}} \colon \BD(C) \to \BD(M_C(2,L))$ is fully faithful for any line bundle of degree $d$.
It is also clear that if $d'$ is another odd number, the open substacks associated with $d$ and $d'$ coincide.
This completes the proof of Theorem~\ref{theorem-main}.
\end{proof}


\section{Cohomological computations}\label{section-bbw}

In this section we carry out the cohomological computations on which the proofs of the previous section rely.

\subsection{Preliminaries}\label{ss:preliminaries}

We start with introducing necessary notation.

Let $\You{w}{h}$ denote the set of Young diagrams inscribed in a rectangle of width~$w$ and height $h$.
We think of such a Young diagram $\alpha$ as of an integer sequence $(\alpha_1,\ldots,\alpha_h)$ such that
\begin{equation*}
w\geq\alpha_1\geq\alpha_2\geq\ldots\geq \alpha_h\geq 0.
\end{equation*}
As usual, we denote by $\alpha^T \in \You{h}{w}$ the transposed diagram.
The number of boxes in $\alpha$ is denoted by~$|\alpha|$ and the length of its diagonal by $\dlen(\alpha)$:
\begin{equation*}
|\alpha| = \sum \alpha_i,
\qquad\text{and}\qquad
\dlen(\alpha) = \max \left\{ i \mid \alpha_i\geq i \right\}.
\end{equation*}
Both do not change under transposition.

Let $s = \dlen(\alpha)$. The diagrams
\begin{equation*}
\topd\alpha := (\alpha_1-s, \dots,\alpha_s-s) 
\qquad\text{and}\qquad 
\btmd\alpha = (\alpha_{s+1},\dots,\alpha_h)
\end{equation*}
are called the {\sf head} and the {\sf tail} of $\alpha$ respectively, see Figure~\ref{fig:head-tail}.
\begin{figure}[h]
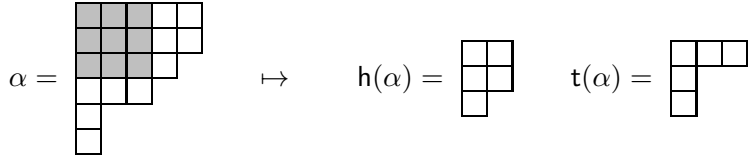

\ytableausetup{centertableaux, smalltableaux}
$\alpha = {}$
\ydiagram[*(lightgray)]
  {3,3,3}
*[*(white)]{5,5,4,3,1,1} 
$\qquad\mapsto\qquad$
$\topd\alpha = {}$
\ydiagram[*(white)]{2,2,1} 
\qquad
$\btmd\alpha = {}$
\ydiagram[*(white)]{3,1,1} 
\caption{The head and the tail}\label{fig:head-tail}
\end{figure}
Transposition interchanges and transposes the head and the tail of a diagram.

A diagram $\alpha$ is called {\sf symmetric} if $\alpha^T = \alpha$. This is equivalent to $\btmd\alpha = \topd\alpha^T$.

Let $\alpha$ be a Young diagram and $s = \dlen(\alpha)$. 
We define the {\sf horizontal $p$-expansion} of $\alpha$ as
\begin{equation*}
\expd{p}{0}\alpha := (\alpha_1 + p, \dots, \alpha_s + p, \alpha_{s+1}, \dots, \alpha_h)
\end{equation*}
and the {\sf vertical $q$-expansion} of $\alpha$ as
\begin{equation*}
\expd{0}{q}\alpha := (\alpha_1, \dots, \alpha_s, \underbrace{s, \dots, s}_{\text{$q$ times}}, \alpha_{s+1}, \dots, \alpha_h)
\end{equation*}
The transposition of a horizontal expansion is a vertical expansion of the transposition. 

Given a Young diagram $\alpha$, we denote by $\Sigma^\alpha$ the associated Schur functor, see, e.g., \cite[Section 2.1]{WeymanBBW}, where, however, a different notation and convention is used.
According to our convention, for $\alpha = (a)$ we have $\Sigma^\alpha\CU = \Sym^a\CU$ and $\Sigma^{\alpha^T}\CU = \Lambda^a\CU$.

\subsection{Cohomology computations on orthogonal Grassmannians}

Let $V$ be a vector space of dimension $N$ with a non-degenerate quadratic form.
We denote by $\Spin(V)$ the corresponding spin group.
It is a simple simply connected group of type $B_n$ if $N = 2n+1$ is odd, and of type $D_n$ if $N = 2n$ is even.

For every $k \le n-1$ we consider the orthogonal Grassmannian $\OGr(k,V)$.
It is a homogeneous variety of $\Spin(V)$.
It comes with the tautological vector bundle $\CU$ of rank $k$ (the restriction of the tautological bundle from $\Gr(k,V)$).
Clearly, $\CU$ is a $\Spin(V)$-equivariant vector bundle, and so is $\Sigma^\alpha\CU$ for any Young diagram $\alpha \in \You{w}{k}$.

On the other hand, $\OGr(k,V)$ carries some spinor bundles, see~\cite[Section~6]{KuznetsovIsotropic}.
In type $B_n$ 
there is one spinor bundle $S$, and in type $D_n$ 
there are two of them: $S_+$ and $S_-$.
The spinor bundles are $\Spin(V)$-equivariant, the spaces of their global sections 
\begin{equation*}
\BS = H^0(\OGr(k,V),S),
\qquad
\BS_\pm = H^0(\OGr(k,V),S_\pm),
\end{equation*}
are representations of $\Spin(V)$ called the half-spinor representations.

In order to treat both cases simultaneously, we write
\begin{equation*}
N = 2(n + \veps),
\qquad\qquad 
\veps \in \left\{0,1/2\right\},
\end{equation*}
so that $\veps = 0$ corresponds to the even case (type $D_n$) and $\veps = 1/2$ corresponds to the odd case (type~$B_n$).
For the same reason
we will sometimes denote $S_+$ simply by $S$ in type $D_n$, and understand both~$S_+$ and~$S_-$ as $S$ in type $B_n$.
The same convention is applied to half-spinor representations.

\begin{lemma}\label{lemma-cohomology-ualpha}
Assume $N \ge 2k + 2$ and let $\beta\in\You{N-k}{k}$ be a Young diagram.
Then the vector bundle $\Sigma^\beta\CU \otimes S$ is acyclic unless $\beta$ is a horizontal expansion of a symmetric Young diagram $\nu \in \You{k}{k}$:
\begin{equation*}
\beta = \expd{N-2k}0\nu, \qquad \nu^T = \nu.
\end{equation*}
Moreover, if $\nu$ is symmetric and $s = \dlen(\nu)$, then
\begin{equation*}
H^p(\OGr(k,V), \Sigma^{\expd{N-2k}0\nu}\CU\otimes S) = 
\begin{cases}
\BS_{(-1)^s}, & \text{for $p = s(N-2k) + (|\nu| - s)/2$,} \\
0, & \text{otherwise.}
\end{cases}
\end{equation*}
\end{lemma}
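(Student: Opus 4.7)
The plan is to apply the Borel--Weil--Bott theorem on the homogeneous variety $\OGr(k,V) = \Spin(V)/P_k$. Since $\Sigma^\beta\CU\otimes S$ is an irreducible $\Spin(V)$-equivariant bundle (a tensor of two Levi-irreducibles), its cohomology is computed by Bott's algorithm: one forms the $\rho$-shifted Levi-highest weight, and the cohomology vanishes unless this weight is regular for the Weyl group, in which case it is concentrated in the single degree $\ell(w)$ where $w$ is the unique Weyl element sending it to a dominant weight.

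Writing $m = n + \veps = N/2$ and working in the standard coordinates $(e_1,\dots,e_n)$ for the character lattice of $\Spin(V)$, the half-spinor representation has highest weight $\tfrac{1}{2}(e_1+\cdots+e_n)$, so the $\rho$-shift for the spinor bundle alone has components $a_i := m - i + \tfrac{1}{2}$, a strictly decreasing sequence of positive half-integers. Twisting $S$ by $\Sigma^\beta\CU$ subtracts $\lambda_i := \beta_{k+1-i}$ from $a_i$ at each position $i \le k$ (and nothing for $i > k$), producing the $\rho$-shifted weight
\begin{equation*}
\mu_i = a_i - \lambda_i, \qquad \lambda_i = \beta_{k+1-i} \text{ for } i \le k, \quad \lambda_i = 0 \text{ for } i > k.
\end{equation*}
Since $\beta$ is a partition, $\lambda_i$ is weakly increasing and $\mu_i$ is strictly decreasing. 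By Bott's theorem, cohomology is nonzero exactly when $\{|\mu_i|\}_{i=1}^n = \{a_1,\dots,a_n\}$ as multisets, and the resulting representation is $\BS_+$ or $\BS_-$ (in type $D$) depending on the parity of the number of sign flips used by $w$.

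The combinatorial step is to characterize when $\mu$ is regular. Since $\mu_i = a_i$ automatically for $i > k$, the condition reduces to $\{|\mu_i|\}_{i\le k} = \{a_1,\dots,a_k\}$. Let $t$ be the largest index with $\mu_i > 0$ and set $s := k - t$; then $\mu_i = a_{p_i}$ for $i \le t$ with $p_1 < \dots < p_t$ increasing, and $-\mu_i = a_{q_i}$ for $t < i \le k$ with $q_{t+1} > \dots > q_k$ decreasing, and $\{p_i\} \sqcup \{q_j\} = \{1,\dots,k\}$. Using $\lambda_i \ge 0$, $\lambda_i \le N-k$, the monotonicity of $\lambda$, and the hypothesis $N \ge 2k+2$, I would verify that every such partition of $\{1,\dots,k\}$ arises; the explicit formulas $p_l = l + \nu_{k+1-l}$ for $l \le t$ and $q_l = 2k+1-l - \nu_{k+1-l}$ for $l > t$ yield a bijection with symmetric Young diagrams $\nu \in \You{k}{k}$ with $\dlen(\nu) = s$ (both sets being enumerated by $\binom{k}{s}$), the corresponding $\beta$ being precisely $\expd{N-2k}{0}\nu$.

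The final step is the length and sign computation. I would directly enumerate the positive roots $e_i\pm e_j$ and, in type $B$, $e_i$ that are sent to negative ones by $w$. Organizing the count according to the pair $(p,q)$, one sees that each of the $s$ sign-flipped positions contributes $N-2k$ inversions (with the type $D$ parity correction at position $n$ absorbed into this count), and the internal permutation sorting the absolute values among the first $k$ coordinates contributes exactly one inversion per cell of $\nu$ lying strictly above its diagonal; by the symmetry $\nu^T = \nu$ there are $(|\nu|-s)/2$ such cells, yielding $\ell(w) = s(N-2k) + (|\nu|-s)/2$. The parity of sign flips is $s$, which in type $D_n$ selects $\BS_{(-1)^s}$ as the half-spinor representation in the image (and in type $B_n$ there is only one spinor representation, so there is nothing to check). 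The main obstacle is the careful bookkeeping: identifying the $(p,q)$-partition with the head/tail decomposition $\topd\nu,\btmd\nu$ of a symmetric Young diagram, and verifying that the enumeration of root inversions reproduces the claimed formula without any double-counting or omissions.
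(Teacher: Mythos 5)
Your proposal is correct and follows essentially the same route as the paper: apply Borel--Bott--Weil to the $\rho$-shifted weight of $\Sigma^\beta\CU\otimes S$, observe that regularity forces the absolute values of the first $k$ entries to fill out the complementary part of $\rho+\tfrac12(1,\dots,1)$, translate the resulting decomposition of $\{1,\dots,k\}$ into the statement $\beta=\expd{N-2k}{0}\nu$ with $\nu$ symmetric, and then compute the degree and the sign $(-1)^s$. The only (immaterial) difference is that you compute $\ell(w)$ by counting inverted positive roots ($N-2k$ per sign flip plus one per cell of $\nu$ above the diagonal), whereas the paper writes $\sigma$ as an explicit product of simple reflections and obtains $s(N-k)-\sum j_t$ before converting to the same formula.
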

\begin{proof}
The bundle $\Sigma^\beta\CU \otimes S$ is $\Spin(V)$-equivariant, so its cohomology can be computed via the Borel--Bott--Weil Theorem~\cite[Corollaries~4.3.7, 4.3.9]{WeymanBBW}
in terms of the Weyl group action on the weight lattice 
\begin{equation*}
\Lambda := \Lambda(B_n) = \Lambda(D_n) = \ZZ^n + ( (1/2,1/2,\dots,1/2) + \ZZ^n ),
\end{equation*}
of the group $\Spin(V)$.
The weight corresponding to this vector bundle is
\begin{equation*}
\delta := \lambda\left(\Sigma^\beta\CU\otimes S\right)
 = \left(-\beta_k+\hlf,\ -\beta_{k-1}+\hlf,\ \ldots,\ -\beta_1+\hlf;\ \hlf,\ \ldots,\ \hlf \right) \in \Lambda
\end{equation*}
(we use a semicolon to divide the first $k$ coordinates from the last $n-k$).
The special weight $\rho$ (the sum of all fundamental weights, or, equivalently, the hals-sum of the positive roots) for the group $\Spin(V)$ can be uniformly written as
\begin{equation*}
\rho = (n-1+\veps, \dots, n-k+\veps;\ n-k-1+\veps, \dots, \veps) \in \Lambda.
\end{equation*}
Summing it up with $\delta$, we obtain
\begin{equation}\label{eq:mu}
\mu := \delta + \rho = \left((n-1)+\veps+\hlf-\beta_k,\dots,(n-k)+\veps+\hlf-\beta_1;\ (n-k-1)+\veps+\hlf,\dots,\veps+\hlf\right).
\end{equation}
The Weyl group acts on the weight lattice by permutations of coordinates and changes of their signs (in type $D_n$ only an even number of sign changes is allowed).
By the Borel--Bott--Weil Theorem the bundle is acyclic if this weight is singular (fixed by an element of the Weyl group), i.e., if the absolute values of two coordinates are the same or if one coordinate is zero (in type $B_n$).

Observe that all terms of $\mu$ are integer if $\veps = \hlf$ (i.e.\ in type $B_n$) and half-integer if $\veps = 0$ (i.e.\ in type~$D_n$).
Moreover, the absolute values of $\mu_i$ are bounded from above by $n -\hlf + \veps$.
It easily follows that $\mu$ is singular unless $\mu = \sigma\lambda_\pm$, where
\begin{equation}\label{eq:mu:sigma}
\lambda_\pm := \left((n-1)+\veps+\hlf, \ldots, (n-k)+\veps+\hlf;\ (n-k-1) + \veps + \hlf,\dots,\veps \pm \hlf\right)
\end{equation}
and $\sigma$ is an element of the Weyl group (the minus sign in the last entry is allowed in type $D_n$ only).

Comparing the weights~\eqref{eq:mu} and~\eqref{eq:mu:sigma}, we see that their last $n-k$ coordinates coincide (up to a sign in the last one). 
Hence, $\sigma$ permutes and changes signs of the first $k$ terms only (and possibly changes the sign of the last term in type $D_n$, note however,
that the total number of sign changes in type $D_n$ is always even, so what happens with the last term is determined by the other terms uniquely). 
Moreover, the first $k$ terms in~\eqref{eq:mu} strictly decrease. This means that $\sigma$ changes the signs of some subset of the first~$k$ terms
of~\eqref{eq:mu:sigma}, invert their order, and put at the end of the first $k$ terms. Explicitly, this means that there is
a decomposition $\{1,\dots,k\} = I \sqcup J = \{i_1,\dots,i_r\} \sqcup \{j_1,\dots, j_s\}$ with $r + s = k$, such that
\begin{align*}
 n - 1 - \beta_{s+r} + \veps + \hlf &= n-i_1 + \veps + \hlf, 
 &&
 \dots, &
 n - r - \beta_{s+1} + \veps + \hlf &= n-i_r + \veps + \hlf,\\
 n - r - 1 - \beta_{s} + \veps + \hlf & = -n + j_s - \veps - \hlf, && \dots, &
 n - k - \beta_1 + \veps + \hlf & = -n + j_1 - \veps - \hlf.
\end{align*}
Taking into account that $2n + 2\veps = N$ we can rewrite this as
\begin{equation}\label{eq:beta}
\begin{aligned}
&\beta_1 = (N-k) + 1 - j_1, 
&& \dots, & 
&\beta_s = (N-k) + s - j_s,
\\
&\beta_{s+1} = i_r - r, 
&&\dots, & 
&\beta_{s+r} = i_1 - 1.
\end{aligned}
\end{equation} 
In other words,
\begin{equation*}
\beta = \expd{N-2k}0\nu,
\qquad 
\nu = (k+1-j_1,\dots,k+s-j_s,i_r-r,\dots,i_1-1)
\end{equation*}
(indeed, since $\nu_s = s + k - j_s \ge s$ and $\nu_{s+1} = i_r - r \le k - r = s$, we see that $\dlen(\nu) = s$).
Thus the cohomology is zero unless $\beta$ is horizontally expanded.
Furthermore, we have 
\begin{equation}\label{eq:head-tail-nu}
\topd\nu = (r+1-j_1,\dots,r+s-j_s) \in \You{r}{s}
\qquad\text{and}\qquad
\btmd\nu = (i_r-r,\dots,i_1-1) \in \You{s}{r},
\end{equation} 
and the condition that $I$ and $J$ are complementary subsets of $\{1,\dots,k\}$ means that $\topd\nu = \btmd\nu^T$.
In other words, $\nu$ is symmetric.

To summarize, we have proved that the cohomology is zero, unless $\beta$ is the horizontal expansion of a symmetric Young diagram $\nu \in \You{k}{k}$,
and in the latter case the cohomology is isomorphic to the representation of $\Spin(V)$ with highest weight $\lambda_\pm$, i.e., to $\BS_\pm$,
and sits in the degree equal to the length of $\sigma$ in the Weyl group.

It remains to compute the length of $\sigma$ and to understand the sign of $\BS$. 
First consider the case of type~$B_n$ (so that we have $N = 2n + 1$). 
Then $\sigma$ is the following composition of simple reflections. 
First we put the term at position $j_s$ to the last position ($n-j_s$ simple reflections), change its sign (1 reflection), and then put it to position $k$ ($n-k$~reflections). 
Then we do the same composition of simple reflections for $j_{s-1}$, \dots, $j_1$. 
Thus,
\begin{equation*}
 \ell(\sigma) = (2n + 1 - k -j_s) + \dots + (2n + 1 - k - j_1) = s(N - k) - \sum j_t.
\end{equation*}
On the other hand, in type $D_n$ (when $N = 2n$) the last simple reflection changes the signs of the last two terms and interchange them. Hence $\sigma$ is the following composition.
First, we put the term at position~$j_s$ to the last position ($n-j_s$ simple reflections), change the sign and the position of the last two terms (1~reflection), 
and then put the previous to the last term to position $k$ ($n-1-k$ reflections). 
Then we do the same composition of simple reflections for $j_{s-1}$, \dots, $j_1$. Thus,
\begin{equation*}
 \ell(\sigma) = (2n - k -j_s) + \dots + (2n - k - j_1) = s(N - k) - \sum j_t.
\end{equation*}
On the other hand, since $\nu$ is symmetric with $\dlen(\nu) = s$ and $\topd\nu$ is given by~\eqref{eq:head-tail-nu}, we have
\begin{equation*}
|\nu| = s^2 + 2|\topd\nu| = s^2 + 2(rs + s(s+1)/2 - \sum j_t).
\end{equation*}
Taking into account $r = k - s$, we deduce $\ell(\sigma) = s(N-2k) + (|\nu|-s)/2$.

Finally, in the case of $D_n$ the simple reflections listed above include $s$ copies of the $n$-th reflection, hence in the end the sign of the last coordinate is $(-1)^s$.
Thus, the resulting representation is $\BS_{(-1)^s}$.
\end{proof}

\begin{lemma}\label{lemma-wt}
Assume $N \ge 2k + 3$, and let $S$ and $S'$ be two \textup{(}possibly isomorphic\textup{)} spinor bundles on~$\OGr(k,V)$.
Let $\beta$ be a Young diagram of width $w(\beta)$ and height $h(\beta) \le k$ such that 
\begin{equation}\label{eq:w-h}
w(\beta) \le h(\beta) + 1.
\end{equation} 
The vector bundle $\Sigma^\beta\CU \otimes S^\vee \otimes S'$ is acyclic unless $\beta = (t)$ for some $t \le 2$
and $S' = S_{(-1)^t}$.
Moreover, if $t \le 2$ we have
\begin{equation*}
H^\bullet(\OGr(k,V),\Sym^t\CU \otimes S_+^\vee \otimes S_{(-1)^t}) = H^\bullet(\OGr(k,V),\Sym^t\CU \otimes S_-^\vee \otimes S_{(-1)^{t+1}}) = \kk[-t].
\end{equation*}
\end{lemma}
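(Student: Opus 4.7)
The plan is to reduce this lemma to Lemma~\ref{lemma-cohomology-ualpha}. The argument proceeds in three stages: first dualize one of the spinor factors; then decompose the resulting tensor product of two spinor bundles into $\Spin(V)$-equivariant irreducibles; finally, apply Borel--Bott--Weil to each summand, relying on the same weight analysis as in the proof of Lemma~\ref{lemma-cohomology-ualpha}.

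First, I would invoke the spinor-bundle duality from \cite[Proposition~6.6]{KuznetsovIsotropic}: on $\OGr(k,V)$ one has $S^\vee \cong S''\otimes\CO(-H)$, where $S''$ is a spinor bundle determined by $S$ (possibly the same, possibly the other one in type~$D_n$, according to the parity of $N-k$). Since $\CO(-H)\cong\det\CU$, this becomes, after absorbing the twist into the Schur functor,
\[
\Sigma^\beta\CU \otimes S^\vee \otimes S' \;\cong\; \Sigma^{\beta+(1^k)}\CU \otimes S''\otimes S',
\]
where $\beta+(1^k)$ is the Young diagram obtained from $\beta$ by padding to $k$ rows and adding $1$ to every row. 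Note that the bound~\eqref{eq:w-h} still tightly controls the shape of this diagram.

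Next, I would decompose $S''\otimes S'$ into irreducible $\Spin(V)$-equivariant pieces. The Levi of the maximal parabolic $\BP_k$ is $\GL(k)\times\Spin(\CU^\perp/\CU)$, and the tensor product of two spinor bundles decomposes accordingly as a direct sum of bundles of the form $\det(\CU)^{\epsilon}\otimes \Lambda^j(\CU^\perp/\CU)$, with a parity restriction on $j$ in type $D_n$ determined by whether $S''$ and $S'$ lie in the same half-spin family. Substituting back turns the problem into the computation of $H^\bullet$ of a finite direct sum of bundles of the form $\Sigma^\gamma\CU\otimes\Lambda^j(\CU^\perp/\CU)$. Each such summand is $\Spin(V)$-equivariant and can be analysed by Borel--Bott--Weil in exactly the style of Lemma~\ref{lemma-cohomology-ualpha}: write the weight, shift by $\rho$, and test regularity. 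The constraint $w(\beta)\le h(\beta)+1$ forces almost every resulting weight to be singular, leaving only a very short list of diagrams~$\gamma$ with nonvanishing cohomology. A direct case analysis then shows that the only surviving configurations are $\beta=(t)$ with $t\in\{0,1,2\}$, coupled with the spinor partner $S'=S_{(-1)^t}$; in each of these cases the BBW shift places a one-dimensional cohomology in degree~$t$, giving the claimed~$\kk[-t]$.

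The main obstacle, I expect, is the sign-bookkeeping in type $D_n$: one must carefully track which of $S_+$, $S_-$ arises as $S''$ after the duality step, then match this against the parity restrictions governing the decomposition of $S''\otimes S'$, so as to conclude exactly that the surviving pairing is $S'=S_{(-1)^t}$ (and not an off-by-one variant). Once this bookkeeping is pinned down, the remaining Borel--Bott--Weil analysis is routine, directly paralleling the weight calculation already carried out in the proof of Lemma~\ref{lemma-cohomology-ualpha}.
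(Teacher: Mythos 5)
Your proposal follows essentially the same route as the paper's proof: decompose the product of spinor bundles into irreducible $\Spin(V)$-equivariant summands of the form $\Lambda^j(\CU^\perp/\CU)$ (up to a $\det\CU$ twist) and rerun the Borel--Bott--Weil weight analysis of Lemma~\ref{lemma-cohomology-ualpha}, with the shape constraint $w(\beta)\le h(\beta)+1$ (together with $N\ge 2k+3$) ruling out all weights except those giving $\beta=(t)$, $t\le 2$, and the spinor-parity bookkeeping pinning down $S'=S_{(-1)^t}$. Your preliminary duality step $S^\vee\cong S''(-H)$ with the twist absorbed into the Schur functor is only a cosmetic rearrangement of the paper's direct decomposition of $S^\vee\otimes S'$ into summands with weights $(-\beta_k,\dots,-\beta_1;\,1,\dots,1,0,\dots,0)$, so the two arguments coincide.
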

\begin{proof}
The argument is completely analogous to the one we used to prove Lemma~\ref{lemma-cohomology-ualpha}.
The only new ingredient is the decomposition of the tensor product of the dual spinor and the spinor bundles into a sum of irreducible equivariant $\Spin(V)$-bundles,
for which we use representation theory of the spin-group.
Using it, we conclude that $\Sigma^\beta\CU \otimes S^\vee \otimes S'$ is a direct sum of $\Spin(V)$-equvariant vector bundles corresponding to some weights of the form
\begin{equation}\label{eq:delta-t}
\delta_t := (-\beta_k,\dots,-\beta_1;\ \underbrace{1,\dots,1}_{\text{$t$ times}},0,\dots,0)
\end{equation}
for $0 \le t \le n-k$.
The weight $\mu := \delta_t + \rho$ can be written as
\begin{equation*}
\mu = ((n-1) + \veps - \beta_k,\dots,(n-k) + \veps - \beta_1;\ (n-k) + \veps, \dots, (n-k-t+1) + \veps, (n-k-t-1) + \veps, \dots, \veps).
\end{equation*}
The absolute values of all entries are bounded by $(n-1) + \veps$, hence either~$\mu$ is singular or $\mu = \sigma\rho$
for some $\sigma$ in the Weyl group. 
The same argument as in Lemma~\ref{lemma-cohomology-ualpha} shows that there is a decomposition $\{1,\dots,k-1,k+t\} = I \sqcup J$ and $\beta$ is given by equations~\eqref{eq:beta}.

First assume $I \ne \{1,\dots,r\}$ for some $r$. 
Let $m+1$ be the first gap in $I$, so that $i_{m} = m$ and $i_{m+1} \ge m+2$.
By~\eqref{eq:beta} we have $h(\beta) = s + (r-m) = k - m$ and $w(\beta) = N - k + 1 - j_1$.
Hence, the assumption~\eqref{eq:w-h} implies
$j_1 \ge (N - 2k) + m \ge m + 2$. 
Thus $m + 1$ is not in~$I \sqcup J$.
This is only possible if $J = \emptyset$.
In this case $\beta = (t)$ (hence, $\Sigma^\beta\CU = \Sym^t\CU$), and then condition~\eqref{eq:w-h} implies $t \le 2$.

Now assume $I = \{1,\dots,r\}$ and $s > 1$.
By~\eqref{eq:beta} we have $h(\beta) \le s$.
On the other hand, 
\begin{equation*}
w(\beta) = (N-k) + 1 - j_1 = (N-k) + 1 - (r+1) = (N-2k) + s \ge s+2.
\end{equation*}
This contradicts with the assumption $w(\beta) \le h(\beta) + 1$, hence this case is impossible.

Finally assume that $I = \{1,\dots,k-1\}$ and $J = \{k+t\}$. 
By~\eqref{eq:beta} we have $\beta = (N - 2k + 1 - t)$.
The assumption~\eqref{eq:w-h} implies $t \ge N - 2k - 1$.
On the other hand, we have $t \le n - k$ by~\eqref{eq:delta-t}.
We conclude that $n - k \ge 2n + 2\veps - 2k - 1$, hence $k \ge n + 2\veps - 1$.
This contradicts to $N \ge 2k + 3$, so this case is also impossible.

Summarizing, we see that $t \le 2$ and $\sigma$ moves the $k$-th entry $t$ steps to the right.
Taking also in account that the weights $\delta_0$ and $\delta_2$ appear in the decomposition of $\Sym^t\CU \otimes S^\vee \otimes S'$ only when $S' \cong S$,
while $\delta_1$ appears only when $S' \cong S_-$, we deduce the claim.
%
%
\end{proof}

\subsection{The resolution}

In this section we apply the above cohomology computations to show that the pushforwards of the spinor bundles under the map $j:\OGr(k,V) \to \Gr(k,V)$ have very nice locally free resolutions.
Recall that $N = \dim V$, and $S$ is the spinor bundle on $\OGr(k,V)$ when $N$ is odd or $S = S_+$ is one of the two spinor bundles if $N$ is even. 
Recall also the Young diagram notation introduced in Section~\ref{ss:preliminaries}.

\begin{proposition}\label{lemma-resolution}
The sheaf $j_*S$ has a resolution by equivariant vector bundles on $\Gr(k,V)$ of the form
\begin{equation*}
0\to \CF_{k(k+1)/2} \to \ldots \to \CF_1 \to \CF_0\to j_*S\to 0,
\end{equation*}
with its $t$-th term equal to
\begin{equation*}
\CF_t = \bigoplus_{\substack{\nu \in \You{k}{k},\ \nu^T = \nu \\ |\nu|+\dlen(\nu) = 2t}}
\left(\BS_{(-1)^{\dlen(\nu)}} \otimes \Sigma^{\expd0{N-2k}\nu}\CU^\perp\right).
\end{equation*}
\end{proposition}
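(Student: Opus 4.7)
The plan is to reconstruct $j_*S$ in $\BD(\Gr(k,V))$ by testing it against Kapranov's full exceptional collection $\{\Sigma^\alpha\CU^\vee\}_{\alpha \in \You{N-k}{k}}$, match this data with the proposed complex $\CF_\bullet$ term by term, and then upgrade the identification to an honest resolution.

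The first step is immediate: by the projection formula,
\begin{equation*}
\RHom_{\Gr(k,V)}(\Sigma^\alpha\CU^\vee, j_*S) \cong \RGamma(\OGr(k,V), \Sigma^\alpha\CU \otimes S),
\end{equation*}
and Lemma~\ref{lemma-cohomology-ualpha} tells us that this vanishes unless $\alpha = \expd{N-2k}{0}\nu$ for a symmetric $\nu \in \You{k}{k}$, in which case it equals $\BS_{(-1)^s}$ in cohomological degree $s(N-2k) + (|\nu|-s)/2$, where $s = \dlen(\nu)$. Since $\nu$ is symmetric, $|\nu|$ has the same parity as $s$, so $2t := |\nu|+s$ is a nonnegative even integer, precisely matching the indexing of~$\CF_t$.

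Next I would carry out the parallel cohomology computation for each summand of $\CF_t$: using the $\Spin(V)$-equivariant identification $\CU^\perp \cong (V/\CU)^\vee$ induced by the quadratic form, the group $\RGamma(\Gr(k,V), \Sigma^\alpha\CU \otimes \Sigma^{\expd{0}{N-2k}\mu}\CU^\perp)$ can be evaluated by the Borel--Weil--Bott theorem on the Grassmannian. A combinatorial analysis of weights essentially dual to that in the proof of Lemma~\ref{lemma-cohomology-ualpha} should show that this cohomology is a single copy of the trivial representation, concentrated in degree $s(N-2k) + (|\mu|-s)/2$ with $s = \dlen(\mu)$, precisely when $\alpha = \expd{N-2k}{0}\mu$, and zero otherwise. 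Tensoring with $\BS_{(-1)^s}$ and summing over symmetric $\mu$ with $|\mu|+\dlen(\mu) = 2t$ reproduces the answer of the first step term by term, showing that $j_*S$ and $\CF_\bullet$ agree on Kapranov's generating collection.

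The main obstacle is upgrading this $\RHom$-level agreement to an actual resolution. Concretely, one needs differentials $d_t \colon \CF_t \to \CF_{t-1}$ satisfying $d_{t-1} \circ d_t = 0$ such that the resulting complex is quasi-isomorphic to $j_*S$. The differentials will be assembled from canonical morphisms $\Sigma^{\expd{0}{N-2k}\nu}\CU^\perp \to \Sigma^{\expd{0}{N-2k}\nu'}\CU^\perp$ between neighbouring pairs of symmetric diagrams, i.e., those with $|\nu|+\dlen(\nu) - |\nu'| - \dlen(\nu') = 2$, built from the tautological exact sequence $0 \to \CU^\perp \to V \otimes \CO \to \CU^\vee \to 0$ on $\Gr(k,V)$ and tensored with the natural spinor intertwiners between $\BS_+$ and $\BS_-$. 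Since the relevant $\Hom$-spaces between distinct summands of adjacent $\CF_t$'s are one-dimensional by the Borel--Weil--Bott analysis of Step~2, these differentials are unique up to scalar, and the identity $d^2=0$ is forced by the vanishing of the relevant $\Ext^2$-groups. Finally, the exactness of the complex in positive cohomological degrees together with the identification $H^0(\CF_\bullet) \cong j_*S$ follows by comparing $\RHom$-groups from Kapranov's collection and invoking its generation property, so that any object with the correct $\RHom$-data must coincide with $j_*S$.
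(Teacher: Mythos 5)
Your first step is exactly the paper's starting point, and your Step 2 is essentially a re-derivation of the (already standard) duality between Kapranov's collection $\{\Sigma^\alpha\CU^\perp\}$ and $\{\Sigma^\beta\CU^\vee\}$; the trouble is Step 3, where the actual content of the proposition lies. First, $d_{t-1}\circ d_t$ is an element of the \emph{degree-zero} space $\Hom(\CF_t,\CF_{t-2})$, not of any $\Ext^2$, and these Hom spaces do not vanish in general: already for $k=2$ one has $\Hom\bigl(\BS_-\otimes\CU^\perp(-1),\,\BS\otimes\CO\bigr)\supset\Hom(\BS_-,\BS)\otimes H^0\bigl(\Gr(2,V),(V/\CU)(1)\bigr)\ne 0$, so nothing forces $d^2=0$ for maps chosen ``up to scalar''. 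Second, the claimed one-dimensionality of the Hom spaces between summands of adjacent $\CF_t$'s does not follow from your Step 2 (which pairs the summands against the dual collection, not against each other), and as plain vector spaces these Homs are large $\GL(V)$-representations; no equivariance argument is given that would cut them down. Third, and most seriously, the final deduction — that a complex whose terms reproduce the graded groups $\Ext^\bullet(\Sigma^{\alpha^T}\CU^\vee,-)$ of $j_*S$ must be quasi-isomorphic to $j_*S$ because the collection generates — is not a valid principle: an object of $\BD(\Gr(k,V))$ is determined by its ``Beilinson table'' only together with the higher (Postnikov/module) structure, and non-isomorphic objects with identical graded $\RHom$-data against a full exceptional collection exist. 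So exactness of your complex is never actually established.

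The paper avoids all of this by letting the general machinery produce both the differentials and the exactness: for the pair of mutually dual full exceptional collections $\{\Sigma^\alpha\CU^\perp\mid\alpha\in\You{k}{N-k}\}$ and $\{\Sigma^\beta\CU^\vee\mid\beta\in\You{N-k}{k}\}$ there is a Beilinson--Kapranov type spectral sequence with $E_1^{pq}=\bigoplus_{p=-|\alpha|}\Ext^q(\Sigma^{\alpha^T}\CU^\vee,\CG)\otimes\Sigma^\alpha\CU^\perp$ converging to $\CG$, applied to $\CG=j_*S$. Lemma~\ref{lemma-cohomology-ualpha} shows that each $\alpha$ contributes in exactly one cohomological degree, namely $q=s(N-2k)+(|\nu|-s)/2$ for $\alpha=\expd0{N-2k}\nu$ with $\nu$ symmetric and $s=\dlen(\nu)$, so that $p+q=-(|\nu|+s)/2$; since the abutment is the single sheaf $j_*S$ in degree $0$, the spectral sequence (equivalently, the canonical Postnikov system attached to the collection) assembles these terms into the asserted resolution, with $\CF_t$ the sum over $|\nu|+\dlen(\nu)=2t$. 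If you want to keep your hands-on construction, you would have to replace the RHom-matching argument by this convolution/spectral-sequence argument — at which point you are back to the paper's proof.
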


\begin{proof}
Recall that, by Kapranov's results~\cite{Kapranov}, the bounded derived category
$\BD(\Gr(k,V))$ has a full exceptional collection consisting of equivariant vector bundles of the form
\begin{equation}\label{eq:kapranov}
\BD(\Gr(k,V)) = \left<\Sigma^\alpha\CU^\perp \mid \alpha\in\You{k}{N-k}\right>.
\end{equation}
Its left dual exceptional collection is $\left<\Sigma^\beta\CU^\vee \mid \beta\in\You{N-k}{k}\right>$,
and for any $\CG \in\BD(\Gr(k,V))$ there is a spectral sequence with the first page
\begin{equation*}
E^{pq}_1=\bigoplus_{p=-|\alpha|} \Ext^q(\Sigma^{\alpha^T}\CU^\vee, \CG)\otimes \Sigma^\alpha\CU^\perp \quad\Longrightarrow\quad H^{p+q}(\CG).
\end{equation*}
Consider this spectral sequence for $\CG = j_*S$. Then
\begin{equation*}
\Ext^q(\Sigma^{\alpha^T}\CU^\vee, j_*S) = \Ext^q(j^*\Sigma^{\alpha^T}\CU^\vee, S) = H^q(\OGr(k,V), \Sigma^{\alpha^T}\CU\otimes S).
\end{equation*}
We see that the nontrivial terms in the spectral sequence come from those $\alpha \in \You{k}{N-k}$
for which we have $H^\bullet(\OGr(k,V), \Sigma^{\alpha^T}\CU\otimes S)\neq 0$.
Lemma~\ref{lemma-cohomology-ualpha} tells us that that the latter happens if and only if $\alpha^T = \expd{N-2k}{0}\nu$, where $\nu$ is symmetric, i.e.\ if $\alpha = \expd0{N-2k}\nu$. 
For such $\alpha$ the only nontrivial cohomology is placed in degree $q = s(N-2k) + (|\nu|-s)/2$, where $s  =\dlen(\nu)$ and
equals $\BS_{(-1)^s}$. Finally, remark that $p = -|\alpha| = -s(N-2k) - |\nu|$, thus $p + q = -(|\nu| + s)/2$.
\end{proof}

\begin{example}
For $k = 1$ the resolution looks very simple:
\begin{equation*}
0 \to \BS_- \otimes \CO(-1) \to \BS \otimes \CO \to j_*S \to 0
\end{equation*}
(we use here the fact that $\Sigma^{1^{N-1}}\CU^\perp = \det\CU^\perp = \CO(-1)$).
This is just the standard resolution of the pushforward of the spinor bundle from 
a quadric to the ambient projective space.

For $k = 2$ the resolution is of the form
\begin{equation*}
0 \to 
\BS \otimes \CO(-2) \to 
\BS_- \otimes \CU^\perp \otimes \CO(-1) \to 
\BS_- \otimes (V/\CU) \otimes \CO(-1) \to 
\BS \otimes \CO \to j_*S \to 0,
\end{equation*}
and for $k = 3$ it looks like
\begin{multline*}
0 \to 
\BS_- \otimes \CO(-3) \to 
\BS \otimes \Lambda^2\CU^\perp \otimes \CO(-2) \to 
\BS \otimes \operatorname{ad}(\CU^\perp) \otimes \CO(-2) \to \\ \to
(\BS_- \otimes \Sym^2(\CU^\perp) \otimes \CO(-1) \ \oplus\  \BS \otimes \Sym^2(V/\CU) \otimes \CO(-2)) \to \\ \to
\BS_- \otimes \operatorname{ad}(\CU^\perp) \otimes \CO(-1) \to 
\BS_- \otimes \Lambda^2(V/\CU) \otimes \CO(-1) \to 
\BS\otimes\CO \to S \to 0,
\end{multline*}
where $\operatorname{ad} = \Sigma^{1,0,\dots,0,-1}$ is the Schur functor of the adjoint representation.
\end{example}

The above resolution is very useful for handling the spinor bundles coming from orthogonal Grassmannians of nondegenerate quadrics. 
As for degenerate quadrics, we have a weaker version of the above result which is still enough for our purposes.
We now assume that $V$ is even-dimensional and use the notation and conventions of Sections~\ref{ss:ogr} and~\ref{ss:spinor}.
In particular, $S_P$ denotes a spinor bundle on $\TOGr_P(k,V)$.

\begin{lemma}\label{lemma-spinor-subcat}
The sheaf $\tj_{P*}S_P$ is contained in the subcategory of $\BD(\Gr(k,V))$
generated by the bundles $\Sigma^{\expd0{N-2k}\nu}\CU_k^\perp$, where $\nu$ runs through the set of all Young diagrams in~$\You{k}{k}$
such that $|\topd\nu| \le |\btmd\nu|$:
\begin{equation*}
\tj_{P*}S_P \in \langle \Sigma^{\expd0{N-2k}\nu}\CU_k^\perp \mid \nu \in \You{k}{k},\ |\topd{\nu}|\leq |\btmd{\nu}| \rangle.
\end{equation*}
\end{lemma}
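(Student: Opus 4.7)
My plan is to parallel the proof of Proposition~\ref{lemma-resolution} using the Kapranov exceptional collection~\eqref{eq:kapranov}. The spectral sequence attached to this collection reduces the statement to identifying those $\alpha \in \You{k}{N-k}$ for which
\begin{equation*}
\Ext^q\bigl(\Sigma^{\alpha^T}\CU_k^\vee,\ \tj_{P*}S_P\bigr) \cong H^q\bigl(\TOGr_P(k,V),\ \Sigma^{\alpha^T}\CU_k \otimes S_P\bigr)
\end{equation*}
is nonzero: any such $\alpha$ contributes to the spectral sequence, so $\tj_{P*}S_P$ lies in the triangulated subcategory generated by the corresponding bundles $\Sigma^\alpha\CU_k^\perp$. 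It therefore suffices to show that every such $\alpha$ is of the form $\alpha = \expd{0}{N-2k}\nu$ with $\nu \in \You{k}{k}$ satisfying $|\topd\nu|\leq|\btmd\nu|$.

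To compute the cohomology, I use that $S_P = p_2^*S$, where $S$ is a spinor bundle on the \emph{odd-dimensional} orthogonal Grassmannian $\OGr(k,V_P)$ (recall $\dim V_P = N-1$). The projection formula yields
\begin{equation*}
H^\bullet\bigl(\TOGr_P(k,V),\ \Sigma^{\alpha^T}\CU_k \otimes p_2^*S\bigr) \cong H^\bullet\bigl(\OGr(k,V_P),\ p_{2*}(\Sigma^{\alpha^T}\CU_k) \otimes S\bigr).
\end{equation*}
On the projective bundle $\TOGr_P(k,V) = \PP((\CO\oplus\CU'_k)^\vee)$ there is a tautological exact sequence $0 \to \CU_k \to p_2^*(\CO\oplus\CU'_k) \to \CO(E) \to 0$, and a comparison of first Chern classes (using~\eqref{equation-ehh}) identifies $\CO(E)$ with the relative hyperplane bundle $\CO_\PP(1)$. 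This lets me express $\Sigma^{\alpha^T}\CU_k$ via a Koszul-type filtration with graded pieces of the form $p_2^*\Sigma^\beta(\CO\oplus\CU'_k) \otimes \CO(-mE)$, and pushing forward via $p_{2*}\CO_\PP(m) = \Sym^m((\CO\oplus\CU'_k)^\vee)$ presents $p_{2*}(\Sigma^{\alpha^T}\CU_k)$ as a $\Spin(V_P)$-equivariant direct sum of Schur functors $\Sigma^\gamma\CU'_k$. By Lemma~\ref{lemma-cohomology-ualpha} applied to $V_P$, the outer cohomology is nonzero only if some $\gamma = \expd{N-1-2k}{0}\nu'$ with $\nu' \in \You{k}{k}$ symmetric appears in this decomposition.

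The main obstacle is the final combinatorial step of translating the symmetry constraint on $\nu'$ into the inequality $|\topd\nu|\le|\btmd\nu|$ on $\nu$. In the non-branching case of Proposition~\ref{lemma-resolution} the ambient bundle was $\CU$ itself, forcing $\alpha$ to be horizontally expanded from a fully symmetric $\nu$; here the extra $\CO$-summand in $\CO\oplus\CU'_k$ together with the $\CO(E)$-twists allow some columns of length $\dlen(\nu)$ to be absorbed as powers of $\CO(E)$ rather than paired symmetrically with the tail, which exactly relaxes the equality $\topd\nu=(\btmd\nu)^T$ of the non-branching case to the inequality $|\topd\nu|\le|\btmd\nu|$. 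Making this rigorous requires an explicit Pieri/Littlewood--Richardson analysis of the pushforward combined with the Weyl-group shuffle computation from Lemma~\ref{lemma-cohomology-ualpha}, which should pin down exactly which horizontal expansions $\alpha = \expd{0}{N-2k}\nu$ survive.
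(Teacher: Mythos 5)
Your plan follows the paper's own route: for branching $P$ you reduce, via the Kapranov collection \eqref{eq:kapranov} and the projection formula, to computing $H^\bullet(\OGr(k,V_P),p_{2*}(\Sigma^{\alpha^T}\CU_k)\otimes S)$ and then invoke Lemma~\ref{lemma-cohomology-ualpha} on the odd orthogonal Grassmannian (for non-branching $P$ the statement is just Proposition~\ref{lemma-resolution}, since symmetric $\nu$ have $|\topd\nu|=|\btmd\nu|$; you allude to this but should state it). The genuine gap is that you stop exactly where the content of the lemma begins: you never actually compute $p_{2*}(\Sigma^{\alpha^T}\CU_k)$, and you never derive the inequality $|\topd\nu|\le|\btmd\nu|$ --- you explicitly defer both to a Pieri/Littlewood--Richardson analysis that ``should pin down'' the surviving $\alpha$. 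In the paper this is precisely the work: the relative Borel--Bott--Weil formula \eqref{eq:p2-star}, which says $p_{2*}(\Sigma^\beta\CU_k)$ vanishes unless $\beta=\expd10\tau$ and then equals $\Sigma^{\expd01\tau}(\CO\oplus\CU'_k)$ shifted by $\dlen(\tau)$, followed by the interlacing decomposition \eqref{eq:gelfand-zeitlin} and the chain $|\btmd{\nu'}|\le|\btmd{\mu}|=|\topd{\mu}|\le|\topd{\nu'}|$ forced by the symmetry of $\mu$. Without this step the asymmetric condition in the statement is simply not obtained, so the proposal is an outline of the paper's argument rather than a proof.

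The intermediate step you sketch also has technical problems. Your identification $\CO(E)\cong(\CO\oplus\CU'_k)/\CU_k$ is correct (consistent with \eqref{equation-ehh}), but then $p_{2*}\CO(mE)\cong\Sym^m(\CO\oplus\CU'_k)$, not $\Sym^m$ of the dual as you wrote; and there is no filtration of $\Sigma^{\alpha^T}\CU_k$ with graded pieces $p_2^*\Sigma^\beta(\CO\oplus\CU'_k)\otimes\CO(-mE)$ --- the natural filtration runs the other way (Schur functors of the middle term $\CO\oplus\CU'_k$ are filtered by Schur functors of the subbundle $\CU_k$ twisted by powers of $\CO(E)$), and inverting it produces a complex whose pushforward involves higher direct images of $\CO(-mE)$. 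In particular your assertion that $p_{2*}(\Sigma^{\alpha^T}\CU_k)$ is a plain direct sum of $\Sigma^\gamma\CU'_k$ is unjustified as stated: the correct answer \eqref{eq:p2-star} carries both a vanishing condition (most $\alpha$ contribute nothing) and a homological shift, and both are needed for the argument. These are exactly the points supplied by the relative Borel--Bott--Weil computation in the paper, so to complete your plan you would in effect have to reproduce it.
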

\begin{proof}
When $P$ is not branching, $\TOGr_P(k,V)\simeq \OGr(k,V)$ and the result follows from Proposition~\ref{lemma-resolution}, as $\topd\nu = (\btmd\nu)^T$ in this case.
	
Now assume that $P$ is branching. Consider the diagram
\begin{equation*}
\xymatrix{
& & \TOGr_P(k,V) \ar[dl]_{p_1} \ar[dr]^{p_2} \ar@/_/[dll]_{\tj_P} \\
\Gr(k,V) & \OGr_P(k,V) \ar[l]^{j_P} & & \OGr(k,V_P) \\
}
\end{equation*}
As in the proof of Proposition~\ref{lemma-resolution}, we decompose $\tj_{P*}S_P = j_{P*}p_{1*}p_2^*S$ with respect to the exceptional collection~\eqref{eq:kapranov}.
In order to do that, it is enough to compute $H^\bullet(\Gr(k,V),\Sigma^\beta\CU_k \otimes j_{P*}p_{1*}p_2^*S)$ for every $\beta \in \You{N-k}{k}$. 
Applying the projection formula, one gets
\begin{equation*}
\Sigma^\beta\CU_k \otimes j_{P*}p_{1*}p_2^*S \simeq j_{P*}p_{1*}(\Sigma^\beta\CU_k \otimes p_2^*S),
\end{equation*}
so it is enough to compute $H^\bullet(\TOGr_P(k,V),\Sigma^\beta\CU_k \otimes p_2^*S)$. 
Pushing forward to $\OGr(k,V_P)$ and using the projection formula once again, we reduce everything to the computation of $H^\bullet(\OGr(k,V_P),p_{2*}(\Sigma^\beta\CU_k) \otimes S)$.
	
Recall that $p_2$ is the natural projection from $\TOGr_P(k,V)=\Gr_{\OGr(k,V_P)}(k,\CO \oplus \CU_k')$.
Using a relative version of the Borel--Bott--Weil theorem (\cite[Corollary~4.1.9]{WeymanBBW}), we find
\begin{equation}\label{eq:p2-star}
p_{2*}(\Sigma^\beta\CU_k) \cong
\begin{cases}
\Sigma^{\expd01\tau}(\CO \oplus \CU'_k)[-\dlen(\tau)], & \text{if $\beta = \expd10\tau$, $\tau \in \You kk$,}\\
0, & \text{otherwise.}
\end{cases}
\end{equation}
Assume we are in the first case and denote $\beta' = \expd01\tau = (\beta_1-1,\dots,\beta_s-1,s,\beta_{s+1},\dots,\beta_k)$, where $\beta = \expd10\tau$ and $s = \dlen(\tau)$.
Remark that $\beta'\in\You{N'-k}{k+1}$, where $N'=\dim V_P = N-1$.
	
Recall also the standard decomposition \cite[Proposition~2.3.1]{WeymanBBW}
\begin{equation}\label{eq:gelfand-zeitlin}
\Sigma^{\beta'}(\CO \oplus \CU_k') \cong \bigoplus_{\beta'_1 \geq \gamma_1 \geq \beta'_2 \geq \ldots \geq \beta'_k \geq \gamma_k \geq \beta'_{k+1}} \Sigma^\gamma\CU_k'.
\end{equation}
From Lemma~\ref{lemma-cohomology-ualpha} we know that $H^\bullet(\OGr(k,V_P),\Sigma^\gamma\CU_k' \otimes S) \neq 0$ if and only if $\gamma = \expd{N'-2k}0{\mu}$ with symmetric $\mu$.
Assuming this, we deduce from inequalities in~\eqref{eq:gelfand-zeitlin} that $s := \dlen(\beta)=\dlen(\beta')=\dlen(\gamma)$ and $\beta'_s \geq \gamma_s \ge (N' - 2k) + \mu_s \ge (N' - 2k) + s$. 
Therefore, $\beta_s = \beta'_s + 1 \ge (N - 2k) + s$.
Since $s = \dlen(\beta)$, it follows that $\beta = \expd{N-2k}0{\nu'}$ for some ${\nu'} \in \You{k}{k}$ with $\dlen({\nu'}) = s$.

Next we relate the number of boxes in the head and the tail of ${\nu'}$.
On one hand, we have
\begin{equation*}
|\btmd{\nu'}| = \beta_{s+1}+\ldots+\beta_k = \beta'_{s+2}+\ldots+\beta'_{k+1} \le \gamma_{s+1}+\ldots+\gamma_k = |\btmd{\mu}|,
\end{equation*}
and on the other hand
\begin{equation*}
|\topd{{\nu'}}| =
\beta_{1} + \ldots + \beta_s - s(N - 2k + s) = 
\beta'_{1} + \ldots + \beta'_{s} - s(N' - 2k + s) \ge
\gamma_{1} + \ldots + \gamma_{s} - s(N' - 2k + s) =
|\topd{\mu}|.
\end{equation*}
Since $\mu$ is symmetric, we deduce that $|\btmd{{\nu'}}| \leq |\btmd{\mu}| = |\topd{\mu}| \leq |\topd{{\nu'}}|$.
	
We have just checked that $H^\bullet(\Gr(k,V),\Sigma^{\beta}\CU_k \otimes \tj_{P*}S_P) \neq 0$ only if $\beta = \expd{N-2k}0{\nu'}$, where $\nu' \in \You{k}{k}$ with $|\btmd{\nu'}| \le |\topd{\nu'}|$.
We conclude that in the decomposition of $\tj_{P*}S_P$ we have only those $\Sigma^\alpha\CU_k^\perp$ for which $\alpha = \expd0{N-2k}{\nu}$, where $\nu \in \You{k}{k}$ with $|\btmd{\nu}| \ge |\topd{\nu}|$.
\end{proof}

\subsection{Vanishing}

Now, finally, we use the resolutions constructed in the previous section to deduce the required vanishing.

\begin{lemma}\label{lemma-vanishing-terms}
Assume $N = \dim V \ge 2k + 2$.
For $\mu, \nu \in \You {k}{k}$ such that either
\begin{itemize}
\item 
$\dlen(\mu)\neq \dlen(\nu)$, or
\item 
$\dlen(\mu) = \dlen(\nu)$ and  $|\topd{\mu}|+|\topd{\nu}|\leq |\btmd{\mu}| + |\btmd{\nu}|$, 
\end{itemize}
we have
\begin{equation*}
H^\bullet(\Gr(k,V),\Sigma^{\expd0{N-2k}\mu}\CU^\perp \otimes \Sigma^{\expd0{N-2k}\nu}\CU^\perp \otimes \CO(-1)) = 0.
\end{equation*}
\end{lemma}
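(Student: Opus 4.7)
My plan is to combine a Littlewood--Richardson decomposition with Borel--Bott--Weil on $\Gr(k,V)$ and a combinatorial case analysis on the resulting summands.

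First, I would apply the Littlewood--Richardson rule to write
\[
\Sigma^{\expd{0}{N-2k}{\mu}}\CU^\perp \otimes \Sigma^{\expd{0}{N-2k}{\nu}}\CU^\perp = \bigoplus_\lambda c^\lambda_{\expd{0}{N-2k}{\mu},\,\expd{0}{N-2k}{\nu}}\,\Sigma^\lambda\CU^\perp,
\]
reducing the problem to proving vanishing of $H^\bullet(\Gr(k,V),\Sigma^\lambda\CU^\perp \otimes \CO(-1))$ for every partition $\lambda$ with nonzero Littlewood--Richardson coefficient.

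Next, I would apply Borel--Bott--Weil on $\Gr(k,V) = \GL(V)/P$: translate $\Sigma^\lambda\CU^\perp \otimes \CO(-1)$ into a $\GL(V)$-weight, add $\rho$, and check when the shifted weight is singular. A direct calculation in the style of Lemma~\ref{lemma-cohomology-ualpha} and Lemma~\ref{lemma-wt} yields an explicit ``forbidden band'' criterion: this sheaf is acyclic precisely when there exists some $i \in \{1,\ldots,N-k\}$ for which $\lambda_i - i$ lies in a specific integer interval of length $k$. If every $\lambda$ in the LR expansion satisfies this condition, the total cohomology vanishes.

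The technical heart of the argument is then to verify this criterion for every $\lambda$ with nonzero LR coefficient under the hypotheses of the lemma. The available constraints are that $\lambda \supseteq \expd{0}{N-2k}{\mu}$ and $\lambda \supseteq \expd{0}{N-2k}{\nu}$, so $\lambda$ inherits the rectangular plateaus of widths $\dlen(\mu)$ and $\dlen(\nu)$ and heights $\dlen(\mu)+(N-2k)$ and $\dlen(\nu)+(N-2k)$ from these shapes; that $|\lambda| = |\expd{0}{N-2k}{\mu}|+|\expd{0}{N-2k}{\nu}|$; and the width bound $\lambda_1 \le 2k$. Splitting the rows of $\lambda$ into those above the forbidden band (where $\lambda_i$ is very large) and those below it (where $\lambda_i \le i$), and matching against the plateau constraints inherited from $\expd{0}{N-2k}{\mu}$ and $\expd{0}{N-2k}{\nu}$, one should see that the only way for $\lambda$ to escape the forbidden band in every row is to have $\dlen(\mu) = \dlen(\nu)$ together with the strict inequality $|\topd{\mu}|+|\topd{\nu}| > |\btmd{\mu}|+|\btmd{\nu}|$, contradicting both clauses of the hypothesis.

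The main obstacle is this last combinatorial step. It requires a careful case split on the diagonal lengths $\dlen(\mu),\dlen(\nu)$ and on the row index where $\lambda$ jumps between its upper and lower portions, and then exploiting the LR compatibility to translate the distribution of mass in $\lambda$ into an inequality between the head and tail sizes of $\mu$ and $\nu$.
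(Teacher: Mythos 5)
Your overall strategy coincides with the paper's: decompose the tensor product by the Littlewood--Richardson rule, apply Borel--Bott--Weil to each summand $\Sigma^\gamma\CU^\perp\otimes\CO(-1)$, and observe that acyclicity of such a summand amounts to some shifted coordinate $\gamma_i-i$ landing in a ``forbidden band'' of length $k$ (equivalently, the $\rho$-shifted $\GL(V)$-weight having a repeated entry). Up to that point your reduction is correct and matches the paper. However, the entire content of the lemma is the combinatorial verification that every $\gamma$ occurring in the expansion does meet the band under the stated hypotheses, and this you do not carry out: you only assert that ``one should see'' it after splitting the rows of $\gamma$ above and below the band, and you yourself flag this as the main obstacle. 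In the paper this step is done concretely: writing $\hat\mu=\expd0{N-2k}{\mu}$, $\hat\nu=\expd0{N-2k}{\nu}$, the LR rule together with $\mu,\nu\in\You{k}{k}$ gives the two-sided bounds $\hat\mu_i\le\gamma_i\le\hat\mu_i+k$ and $\hat\nu_i\le\gamma_i\le\hat\nu_i+k$; these pin the shifted coordinates at positions $r+1$ and $s+1$ (with $r=\dlen(\mu)$, $s=\dlen(\nu)$) into a window of length $k+1$, which immediately kills the case $r\ne s$, and when $r=s$ forces $\gamma_{s+1}=k+s$ and $\gamma_{s+2}=s$ for any surviving $\gamma$, whence LR yields $|\topd{\mu}|+|\topd{\nu}|\ge(s+1)(k-s)$ and $|\btmd{\mu}|+|\btmd{\nu}|\le s(k-s)$. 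Your proposal reduces the lemma to exactly this problem without solving it, so as it stands there is a genuine gap.

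Moreover, the outcome you predict for that combinatorial step is not quite right: the hypothesis is \emph{not} contradicted in all surviving cases. Combining the two displayed inequalities with the assumption $|\topd{\mu}|+|\topd{\nu}|\le|\btmd{\mu}|+|\btmd{\nu}|$ gives $(s+1)(k-s)\le s(k-s)$, hence $s=k$, i.e.\ $\mu=\nu=(k,\dots,k)$; there the heads and tails are empty, the hypothesis holds with equality, and the unique surviving summand is $\CO(-2k)$, so one must argue separately that $\CO(-2k-1)$ is acyclic on $\Gr(k,V)$ --- which is precisely where the assumption $N\ge 2k+2$ enters (it is also needed earlier to force $\gamma_{s+2}=s$). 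Your sketch neither isolates this boundary case nor indicates where the dimension hypothesis is used, so the argument as outlined would not close.
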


\begin{proof}
Consider the decomposition
\begin{equation*}
\Sigma^{\expd0{N-2k}\mu}\CU^\perp \otimes \Sigma^{\expd0{N-2k}\nu}\CU^\perp = \bigoplus \Sigma^\gamma\CU^\perp,
\end{equation*}
where $\gamma$ runs over a set of Young diagrams of height $N - k$ prescribed by the Littlewood--Richardson rule.
To prove the required vanishing it is enough to show that $H^\bullet(\Gr(k,V),\Sigma^\gamma\CU^\perp(-1)) = 0$ for all such~$\gamma$.
The Borel--Bott--Weil theorem implies that the latter happens if and only if the weight
\begin{equation*}
\tau = (N-1, N-2, \ldots, N-k;\, N-k + \gamma_1, \dots, 2+\gamma_{N-k-1}, 1+\gamma_{N-k})
\end{equation*}
of $\GL(N)$ is fixed by an element of the Weyl group (which acts by permuting coordinates), i.e.\ if $\tau$ has two equal coordinates.
	
Let us denote $r=\dlen(\mu)$ and $s=\dlen(\nu)$ and set $\hat{\mu} = \expd0{N-2k}\mu$, $\hat\nu = \expd0{N-2k}\nu$.
By definition
\begin{equation*}
\hat\mu = (\mu_1, \ldots, \mu_r, \underbrace{r, \ldots, r}_{N-2k}, \mu_{r+1},\ldots, \mu_k)
\quad\text{and}\quad
\hat\nu = (\nu_1, \ldots, \nu_s, \underbrace{s, \ldots, s}_{N-2k}, \nu_{s+1},\ldots, \nu_k).
\end{equation*}
It follows directly from the Littlewood--Richardson rule and inequalities $\mu_1,\nu_1 \leq k$ (which are due to the fact that $\mu,\nu \in \You k{k}$) that
\begin{equation}\label{eq:lvt:ineq}
\hat\mu_i \leq \gamma_i \leq \hat\mu_{i} + k, \qquad
\hat\nu_i \leq \gamma_i \leq \hat\nu_i + k.
\end{equation}
Now, for $i = r+1$ the first inequality becomes $r \leq \gamma_{r+1} \leq r+k$ and for $i = s+1$ the second transforms into $s \leq \gamma_{s+1} \leq s+k$.
For the corresponding terms $\tau_{k+r+1} = N-k-r+\gamma_{r+1}$ and $\tau_{k+s+1} = N-k-s+\gamma_{s+1}$ we conclude
\begin{equation*}
N-k \leq \tau_{k+r+1}, \tau_{k+s+1} \leq N.
\end{equation*}
Recall that the first $k$ terms of $\tau$ occupy the range $[N-k,\ldots,N-1]$. 
Now, whenever $r \neq s$, i.e.\ $\dlen(\mu) \ne \dlen(\nu)$, either $\tau_{k+r+1} = \tau_{k+s+1}$ or one of these numbers coincides with one of the first $k$ coordinates of $\tau$.
In both cases the weight is singular, hence the cohomology vanishes.
	
We are left with the case $r = s$, $\tau_{k+s+1} = N$, hence $\gamma_{s+1}=k+s$.
Let us look at the next term $\tau_{k+s+2}=N-k-s-1+\gamma_{s+2}$. 
The second inequality in~\eqref{eq:lvt:ineq} implies that $s \le \gamma_{s+2} \le k + s$ (we use here the fact that $N - 2k \ge 2$), hence $N-k-1 \leq \tau_{k+s+2} \leq N-1$, 
and we deduce that either $\tau_{k+s+2}$ coincides with one of the first $k$ terms (and we are done) or $\tau_{k+s+2}=N-k-1$ and $\gamma_{s+2}=s$.
Again, in the first case the weight is singular, hence the cohomology vanishes.

So, assume that we are in the latter case. Thus, $\gamma$ is of the form
\begin{equation*}
\gamma = (\gamma_1,\ldots,\gamma_s,k+s,\underbrace{s,\ldots,s}_{N-2k+1},\gamma_{N-2k+s+1},\ldots,\gamma_{N-k}).
\end{equation*}
It follows from the Littlewood--Richardson rule that
\begin{align*}
\gamma_1 + \ldots + \gamma_{s+1} \leq \hat\mu_1+\ldots+\hat\mu_{s+1} + \hat\nu_1+\ldots+\hat\nu_{s+1} = |\topd{\mu}| + |\topd{\nu}| + 2s(s+1)
\end{align*}
Meanwhile, $\gamma_1\geq\gamma_2\geq\ldots\geq\gamma_s\geq\gamma_{s+1}=s+k$, which implies $\gamma_1+\ldots+\gamma_{s+1}\geq (s+1)(s+k)$ and hence
\begin{equation}\label{eq:lvt:ineq3}
|\topd{\mu}| + |\topd{\nu}| \geq (s+1)(k-s).
\end{equation}
Similarly, we have inequalities
\begin{align*}
\gamma_{N-2k+s+1} + \ldots + \gamma_{N-k} \geq \hat\mu_{N-2k+s+1} + \ldots + \hat\mu_{N-k} + \hat\nu_{N-2k+s+1} + \ldots + \hat\nu_{N-k} =
|\btmd{\mu}| + |\btmd{\nu}|
\end{align*}
and so inequalities $s=\gamma_{N-2k+s}\geq\gamma_{N-2k+s+1}\geq\ldots\geq \gamma_{N-k}$ imply that
\begin{equation}\label{eq:lvt:ineq4}
|\btmd{\mu}| + |\btmd{\nu}| \leq s(k-s).
\end{equation}
Combining inequalities~\eqref{eq:lvt:ineq3} and~\eqref{eq:lvt:ineq4} with the assumption of the lemma, we deduce that $s=k$, hence $\mu = \nu = (k,\dots,k)$, and
\begin{equation*}
\Sigma^{\expd0{N-2k}\mu}\CU^\perp \simeq \Sigma^{\expd0{N-2k}\nu}\CU^\perp \simeq \CO(-k).
\end{equation*}
The desired vanishing follows from $2k+1<N$.
\end{proof}

Now we switch back to the notation introduced in Sections~\ref{ss:ogr} and~\ref{ss:spinor}.

\begin{corollary}\label{corollary-vanishing}
For any non-branching points $P_1 \ne P_2$ we have 
\begin{equation*}
H^\bullet(\Gr(k,V),\tj_{P_1*}(S_{P_1}) \otimes \tj_{P_2*}(S_{P_2}) \otimes \CO(-1)) = 0.
\end{equation*}
\end{corollary}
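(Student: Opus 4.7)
The plan is to expand both pushforwards via the resolutions of Proposition~\ref{lemma-resolution}, and then reduce the required vanishing to the term-by-term computation of Lemma~\ref{lemma-vanishing-terms}. Since both $P_1$ and $P_2$ are non-branching, we have $\TOGr_{P_i}(k,V) = \OGr_{P_i}(k,V)$ and $\tj_{P_i} = j_{P_i}$, so Proposition~\ref{lemma-resolution} applies and furnishes, for each $i \in \{1,2\}$, a finite locally free resolution $\CF^\bullet_{(i)} \to \tj_{P_i*}S_{P_i}$ whose terms are direct sums of equivariant bundles of the form $\Sigma^{\expd0{N-2k}\nu}\CU^\perp$ with $\nu \in \You{k}{k}$ \emph{symmetric}.

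Next I would form the total complex of the double complex $\CF^\bullet_{(1)} \otimes \CF^\bullet_{(2)} \otimes \CO(-1)$. Since the $\CF^\bullet_{(i)}$ are bounded complexes of locally free sheaves, this total complex is quasi-isomorphic to $\tj_{P_1*}S_{P_1} \otimes \tj_{P_2*}S_{P_2} \otimes \CO(-1)$, and the standard hypercohomology spectral sequence reduces the vanishing in question to showing that every direct summand
\begin{equation*}
H^\bullet\bigl(\Gr(k,V),\,\Sigma^{\expd0{N-2k}\mu}\CU^\perp \otimes \Sigma^{\expd0{N-2k}\nu}\CU^\perp \otimes \CO(-1)\bigr)
\end{equation*}
with symmetric $\mu,\nu \in \You{k}{k}$ vanishes.

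The final step is to invoke Lemma~\ref{lemma-vanishing-terms}. Symmetry of $\mu$ and $\nu$ gives $|\topd\mu| = |\btmd\mu|$ and $|\topd\nu| = |\btmd\nu|$, hence $|\topd\mu| + |\topd\nu| = |\btmd\mu| + |\btmd\nu|$; this activates the second bullet of Lemma~\ref{lemma-vanishing-terms} when $\dlen\mu = \dlen\nu$, while the first bullet handles the complementary case $\dlen\mu \ne \dlen\nu$. The numerical hypothesis $N \ge 2k+2$ of the lemma is automatic in our range, since $N = 2g+2$ and $k \le g$. All the genuinely non-trivial work has already been absorbed into Proposition~\ref{lemma-resolution} and Lemma~\ref{lemma-vanishing-terms}; the only point to verify here is the automatic balance forced by symmetry of the Young diagrams appearing in the resolution, so I do not expect any real obstacle at this stage.
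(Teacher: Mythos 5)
Your proof is correct and is essentially the paper's argument: the paper cites Lemma~\ref{lemma-spinor-subcat} (whose non-branching case is proved exactly by the resolution of Proposition~\ref{lemma-resolution}, where symmetry of $\nu$ gives $|\topd{\nu}|=|\btmd{\nu}|$) and then applies Lemma~\ref{lemma-vanishing-terms} term by term, just as you do. Your explicit double-complex/spectral-sequence reduction merely spells out what the paper leaves implicit, and it is compatible with the paper's convention that the tensor product in the statement is derived.
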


\begin{proof}
From Lemma~\ref{lemma-spinor-subcat}  we know that
\begin{equation*}
\tj_{P_i*}(S_{P_i})\in \langle \Sigma^{\expd0{N-2k}\nu}\CU_k^\perp \mid \nu \in \You{k}{k},\ |\topd{\nu}|\leq |\btmd{\nu}| \rangle.
\end{equation*}
for both $i = 1$ and $i = 2$.
The vanishing now follows from Lemma~\ref{lemma-vanishing-terms}.
\end{proof}

\begin{lemma}\label{lemma-same-p-type-d}
Assume $\dim V = 2n$, $k\leq n-2$. Then on $\OGr(k, V)$ one has
\begin{enumerate}
\item
$\displaystyle\Ext^\bullet(S_+,S_- \otimes \Lambda^\bullet(\Sym^2\CU)) = \Ext^\bullet(S_-,S_+ \otimes \Lambda^\bullet(\Sym^2\CU)) = 0$,
\item
$\displaystyle\Ext^\bullet(S_+,S_+ \otimes \Lambda^m(\Sym^2\CU)) \cong \Ext^\bullet(S_-,S_- \otimes \Lambda^m(\Sym^2\CU)) \cong 
\begin{cases}
\kk[-2m], & \text{for $m=0,1$,} \\
0, & \text{otherwise.}
\end{cases}$
\end{enumerate}
\end{lemma}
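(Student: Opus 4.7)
The plan is to apply Lemma~\ref{lemma-wt} termwise after decomposing $\Lambda^m(\Sym^2\CU)$ into Schur bundles $\Sigma^\lambda \CU$. The key input is a classical plethystic identity:
\begin{equation*}
\Lambda^m(\Sym^2 \CU) \cong \bigoplus_\lambda \Sigma^\lambda \CU,
\end{equation*}
where $\lambda$ ranges over partitions of $2m$ whose Frobenius symbol has the shape $(a_1, \ldots, a_r \mid a_1 - 1, \ldots, a_r - 1)$ with $a_1 > \cdots > a_r \ge 1$, equivalently $\lambda_i = \lambda_i^T + 1$ for $1 \le i \le \dlen(\lambda)$. This may be read off the generating function $\sum_m z^m \Lambda^m(\Sym^2 V) = \prod_{i \le j}(1 + z x_i x_j)$. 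The crucial consequence is that every such $\lambda$ satisfies $\lambda_1 = \lambda_1^T + 1$, so the hypothesis $w(\lambda) \le h(\lambda) + 1$ of Lemma~\ref{lemma-wt} holds; the numerical hypothesis $N \ge 2k + 3$ is also in place since $N = 2n \ge 2k + 4$ by $k \le n - 2$.

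Applying Lemma~\ref{lemma-wt} to each summand, the bundle $\Sigma^\lambda \CU \otimes S^\vee \otimes S'$ is acyclic unless $\lambda = (t)$ for some $t \le 2$, with $S'$ pinned down by the parity of $t$. In the sum above, the only one-row partitions that ever appear are $\lambda = \emptyset$ (from $\Lambda^0(\Sym^2\CU) = \CO$, i.e.\ $m = 0$) and $\lambda = (2)$ (from $\Lambda^1(\Sym^2\CU) = \Sym^2\CU$, corresponding to $r = 1$, $a_1 = 1$, $b_1 = 0$); thus for $m \ge 2$ every summand is acyclic and all the required Ext groups vanish.

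For $m \in \{0, 1\}$ one simply reads off the remaining contributions from Lemma~\ref{lemma-wt} at $t = 0$ and $t = 2$. Both values of $t$ are even, and the lemma yields a nonzero answer exactly when the chosen spinor pair matches $(S_+, S_{(-1)^t})$ or $(S_-, S_{(-1)^{t+1}})$. For part~(1), $(S, S') = (S_+, S_-)$ or $(S_-, S_+)$ forces $t$ to be odd, contradicting $t \in \{0, 2\}$, so all Ext groups vanish. For part~(2), $S = S'$ forces $t$ even, which is automatic: one obtains $\Hom = \kk$ at $m = 0$ and $\Ext^2 = \kk$ at $m = 1$, with all other degrees zero, matching $\kk[-2m]$.

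The principal obstacle is justifying the plethystic decomposition, and in particular the bound $w(\lambda) \le h(\lambda) + 1$ on its summands. If the explicit Frobenius description is inconvenient to invoke, one can instead prove only this weaker shape bound directly, since that is all Lemma~\ref{lemma-wt} truly uses.
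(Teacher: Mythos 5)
Your proposal is correct and follows essentially the same route as the paper: both decompose $\Lambda^m(\Sym^2\CU)$ into Schur bundles (your Frobenius-symbol description, arm $=$ leg $+1$, is exactly the paper's ``horizontal $1$-expansion of a symmetric diagram $\nu$ with $|\nu|+\dlen(\nu)=2m$'', cited from Weyman) and then apply Lemma~\ref{lemma-wt} termwise, noting that each summand satisfies $w(\lambda)\le h(\lambda)+1$ and that the only single-row summands are $\emptyset$ and $(2)$. Your explicit verification of the width--height hypothesis is a point the paper leaves implicit, but the argument is the same.
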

\begin{proof}
As in Lemma~\ref{lemma-wt}, let $S$ and $S'$ be two (possibly isomorphic) spinor bundles on $\OGr(k,V)$.
Let us compute
$\Ext^\bullet(S,S' \otimes \Lambda^\bullet(\Sym^2\CU)) = H^\bullet(\OGr(k,V), S^\vee\otimes S' \otimes \Lambda^\bullet(\Sym^2\CU))$.
Recall that (\cite[Proposition~2.3.9]{WeymanBBW})
\begin{equation}\label{eq:lm-s2}
\Lambda^m(\Sym^2\CU) = \bigoplus_{\substack{\nu \in \You kk,\ \nu^T = \nu,\\|\nu| + \dlen(\nu) = 2m}} \Sigma^{\expd10\nu}\CU,
\end{equation}
and by Lemma~\ref{lemma-wt} we have $H^\bullet(\OGr(k,V), \Sigma^{\expd10\nu}\CU \otimes S^\vee\otimes S') = 0$ unless $\nu = (0)$ or $\nu = (1)$.
Moreover, in the first case we have $m = 0$, and the cohomology is nontrivial (and equals $\kk$) only if $S' \cong S$.
Similarly, in the second case we have $m = 1$, the cohomology is nontrivial (and equals $\kk[-2]$) only if $S' \cong S$.
%
\end{proof}

Now we again return to the notation from Sections~\ref{ss:ogr} and~\ref{ss:spinor}.

\begin{lemma}\label{lemma-same-p-type-b}
Let $P$ be a branching point. Then on $\OGr(k,V_P)$ one has
\begin{equation*}
\Ext^\bullet(S,S \otimes p_{2*}(\Lambda^m(\Sym^2\CU_k))) = 
\begin{cases}
\kk[-2m], & \text{for $m=0,1$,} \\
0, & \text{otherwise}.
\end{cases}
\end{equation*}
\end{lemma}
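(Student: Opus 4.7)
The plan is to decompose $\Lambda^m(\Sym^2\CU_k)$ via the Schur functor identity~\eqref{eq:lm-s2} as a sum over symmetric Young diagrams $\nu \in \You{k}{k}$ with $|\nu|+\dlen(\nu) = 2m$, then push forward each summand $\Sigma^{\expd10\nu}\CU_k$ along the $\PP^k$-bundle $p_2$ using the relative Borel--Bott--Weil formula~\eqref{eq:p2-star}, which yields $\Sigma^{\expd01\nu}(\CO \oplus \CU'_k)[-\dlen(\nu)]$. I would then split this further via the branching rule~\eqref{eq:gelfand-zeitlin} into a direct sum of $\Sigma^\gamma\CU'_k$ with $\gamma \in \You{k}{k}$ interlacing $\expd01\nu$, so that Lemma~\ref{lemma-wt} applies term by term on $\OGr(k,V_P)$; note that $\dim V_P = 2g+1 \ge 2k+3$ is exactly the standing assumption $k \le g-1$.

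Lemma~\ref{lemma-wt} singles out only $\gamma = (t)$ with $t \le 2$ as giving a nonzero (one-dimensional, concentrated in cohomological degree~$t$) contribution. So the heart of the argument is the combinatorial claim that the only symmetric $\nu$ admitting such a $\gamma$ in its interlacing set are $\nu = (0)$ and $\nu = (1)$. Writing $s = \dlen(\nu)$, the key observation is that $(\expd01\nu)_{s+1} = s$, so for $s \geq 2$ the interlacing condition $\gamma_2 \geq (\expd01\nu)_3 \ge 2$ already rules out $\gamma = (t)$. For $s = 1$ the symmetric diagrams are the hooks $\nu = (a, 1^{a-1})$, and for $a \geq 2$ one gets $(\expd01\nu)_3 = 1$, forcing $\gamma_2 = 1$. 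Only $\nu = (0)$ and $\nu = (1)$ survive.

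It remains to assemble the degree shifts. For $\nu = (0)$ we get $m = 0$, no pushforward shift, and $\gamma = (0)$ contributes $\Hom(S,S) = \kk$ via Lemma~\ref{lemma-wt}. For $\nu = (1)$ we get $m = 1$, a shift $[-1]$ from the pushforward, and two interlacing diagrams $\gamma = (1)$ and $\gamma = (1,1)$: the latter is killed by Lemma~\ref{lemma-wt}, the former contributes another $[-1]$, so the total is $\kk[-2] = \kk[-2m]$.

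The main obstacle, I expect, is bookkeeping rather than any deep new ingredient: one must keep the two cohomological shifts (from the relative pushforward and from Lemma~\ref{lemma-wt}) aligned, and double-check the symmetric-diagram combinatorics that eliminates all $\nu$ beyond $(0)$ and $(1)$.
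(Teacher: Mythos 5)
Your proposal follows the paper's proof along essentially the same route: the same chain of decompositions via~\eqref{eq:lm-s2}, the relative pushforward formula~\eqref{eq:p2-star}, and the branching rule~\eqref{eq:gelfand-zeitlin}, followed by a term-by-term application of Lemma~\ref{lemma-wt} on $\OGr(k,V_P)$, and your final bookkeeping of the two shifts ($[-\dlen(\nu)]$ from $p_{2*}$ and $[-t]$ from the lemma) correctly produces $\kk[-2m]$ for $m=0,1$ and nothing else. Your combinatorial elimination of symmetric $\nu$ other than $(0)$ and $(1)$ (via the entry $(\expd{0}{1}{\nu})_3$ for $s\ge 2$ and for hooks with $a\ge 2$) is a legitimate, slightly more hands-on substitute for the paper's argument, which instead uses $\dlen(\gamma)=\dlen(\nu)$ to pin down $\nu$.

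There is, however, one step you pass over that the paper has to (and does) address: Lemma~\ref{lemma-wt} is \emph{not} an unconditional statement about all $\gamma$ of height at most $k$ — its conclusion requires the width hypothesis $w(\gamma)\le h(\gamma)+1$, and checking $\dim V_P=2g+1\ge 2k+3$ alone does not license the phrase ``Lemma~\ref{lemma-wt} applies term by term.'' Since you invoke the lemma to kill \emph{every} interlacing $\gamma$ that is not a single row $(t)$ with $t\le 2$, you must verify this hypothesis for all $\gamma$ occurring in the decomposition; without it the acyclicity of those terms is unjustified (the hypothesis is genuinely used in the proof of Lemma~\ref{lemma-wt} to exclude further non-acyclic shapes). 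The verification is short and again uses the symmetry of $\nu$: the interlacing inequalities give $w(\gamma)=\gamma_1\le (\expd{0}{1}{\nu})_1=w(\nu)$ and $h(\gamma)\ge h(\expd{0}{1}{\nu})-1=h(\nu)$, and $w(\nu)=h(\nu)$ because $\nu^T=\nu$, so in fact $w(\gamma)\le h(\gamma)$ for every $\gamma$ in the sum. This is exactly the one-line observation in the paper's proof (which it also uses to rule out $\gamma=(2)$, a case your hook analysis excludes instead). With that check inserted, your argument is complete and matches the paper's.
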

\begin{proof}
By~\eqref{eq:lm-s2}, \eqref{eq:p2-star}, and~\eqref{eq:gelfand-zeitlin} we have
\begin{equation*}
p_{2*}(\Lambda^m(\Sym^2\CU_k)) \cong 
\bigoplus_{\substack{\nu \in \You kk,\ \nu^T = \nu,\\|\nu| + \dlen(\nu) = 2m}} \quad
\bigoplus_{\nu'_1 \geq \gamma_1 \geq \nu'_2 \geq \ldots \geq \nu'_k \geq \gamma_k \geq \nu'_{k+1}} \Sigma^\gamma\CU_k'[-\dlen(\nu)],
\end{equation*}
where $\nu' = \expd01\nu = (\nu_1,\dots,\nu_s,s,\nu_{s+1},\dots,\nu_k)$ and $s = \dlen(\nu)$.
%
%
Note that $\gamma_i \ge \nu'_{i+1}$ implies an inequality of heights $h(\gamma) \ge h(\nu') - 1 = h(\nu)$, and $\nu'_1 \ge \gamma_1$ means $w(\gamma) \le w(\nu') = w(\nu)$.
Since $\nu$ is symmetric, we deduce $h(\gamma) \ge w(\gamma)$.
Applying Lemma~\ref{lemma-wt} to $\Sigma^\gamma\CU'_k \otimes S^\vee \otimes S$, we conclude that the cohomology groups $H^\bullet(\OGr(k,V_P), \Sigma^\gamma\CU'_k \otimes S^\vee \otimes S)$ 
are nontrivial only when $\gamma = (0)$ and $\gamma = (1)$.
Furthermore, since $\dlen(\gamma) = \dlen(\nu') = \dlen(\nu)$, the case $\gamma = (0)$ corresponds to $\nu = (0)$, and gives
$\Hom(S, S) = \kk$.
Analogously, the case $\gamma = (1)$ corresponds to $\nu' = (\nu'_1,\nu'_2)$ with $\nu'_2 \le 1$.
Since $\nu' = \expd01\nu$ and $\nu$ is symmetric, this means $\nu = (1)$, and gives
$\Ext^2(S, S \otimes p_{2*}(\Sym^2\CU_k)) = \kk$.
Since these are the only possibilities, the lemma is proven.
%
\end{proof}

\bibliographystyle{abbrv}  
\bibliography{curves}

\begin{thebibliography}{10}

\bibitem{bernardara2016}
M.~Bernardara, M.~Bolognesi, and D.~Faenzi.
\newblock Homological projective duality for determinantal varieties.
\newblock {\em Adv. Math.}, 296:181--209, 2016.

\bibitem{BondalOrlov}
A.~Bondal and D.~Orlov.
\newblock Semiorthogonal decompositions for algebraic varieties.
\newblock {\em arXiv preprint alg-geom/9506012}, 1995.

\bibitem{DR}
U.~V. Desale and S.~Ramanan.
\newblock Classification of vector bundles of rank 2 on hyperelliptic curves.
\newblock {\em Inventiones mathematicae}, 38(2):161--185, 1976.

\bibitem{Drezet}
J.~M. Drezet and M.~S. Narasimhan.
\newblock Groupe de {P}icard des vari{\'{e}}t{\'{e}}s de modules de
  fibr{\'{e}}s semi-stables sur les courbes alg{\'{e}}briques.
\newblock {\em Inventiones Mathematicae}, 97(1):53--94, feb 1989.

\bibitem{Kapranov}
M.~M. Kapranov.
\newblock On the derived category of coherent sheaves on {G}rassmann manifolds.
\newblock {\em Mathematics of the USSR-Izvestiya}, 24(1):183, 1985.

\bibitem{kiem2017all}
Y.-H. Kiem, I.-K. Kim, H.~Lee, and K.-S. Lee.
\newblock All complete intersection varieties are fano visitors.
\newblock {\em Advances in Mathematics}, 311:649--661, 2017.

\bibitem{kiem2015fano}
Y.-H. Kiem and K.-S. Lee.
\newblock Fano visitors, fano dimension and quasi-phantom categories.
\newblock {\em arXiv preprint arXiv:1504.07810}, 2015.

\bibitem{KuznetsovHyperplane}
A.~Kuznetsov.
\newblock Hyperplane sections and derived categories.
\newblock {\em Izvestiya: Mathematics}, 70(3):447, 2006.

\bibitem{KuznetsovQuadrics}
A.~Kuznetsov.
\newblock Derived categories of quadric fibrations and intersections of
  quadrics.
\newblock {\em Adv. Math.}, 218(5):1340--1369, 2008.

\bibitem{KuznetsovIsotropic}
A.~Kuznetsov.
\newblock Exceptional collections for {G}rassmannians of isotropic lines.
\newblock {\em Proceedings of the London Mathematical Society}, 97(1):155--182,
  2008.

\bibitem{Lipman}
J.~Lipman and M.~Hashimoto.
\newblock {\em Foundations of Grothendieck Duality for Diagrams of Schemes}.
\newblock Springer Berlin Heidelberg, 2009.

\bibitem{narasimhan}
M.~Narasimhan.
\newblock Derived categories of moduli spaces of vector bundles on curves.
\newblock {\em Journal of Geometry and Physics}, 2017.
\newblock In Press, Corrected Proof.

\bibitem{Orlov}
D.~O. Orlov.
\newblock Equivalences of derived categories and {K}3 surfaces.
\newblock {\em Journal of Mathematical Sciences}, 84(5):1361--1381, 1997.

\bibitem{ottaviani1988spinor}
G.~Ottaviani.
\newblock Spinor bundles on quadrics.
\newblock {\em Transactions of the American Mathematical Society},
  307(1):301--316, 1988.

\bibitem{MilesReid}
M.~A. Reid.
\newblock {\em The complete intersection of two or more quadrics}.
\newblock University of Cambridge, 1972.

\bibitem{WeymanBBW}
J.~Weyman.
\newblock {\em Cohomology of vector bundles and syzygies}, volume 149.
\newblock Cambridge University Press, 2003.

\end{thebibliography}

\end{document}